\numberwithin{equation}{section}
\theoremstyle{definition}
\newtheorem{defn}{Definition}[section]
\theoremstyle{plain}
\newtheorem{cor}[defn]{Corollary}
\newtheorem{thm}[defn]{Theorem}
\newtheorem{lem}[defn]{Lemma}
\newtheorem{prop}[defn]{Proposition}
\newtheorem{defn-thm}[defn]{Definition-Theorem}
\newtheorem*{Freyd-Mitchell}{Freyd-Mitchell embedding Theorem}
\def\Hom{\operatorname{Hom}}
\def\Ho{\operatorname{Ho}}
\def\cal{\mathcal}
\def\X{\mathcal{X}}
\def\A{\mathcal{A}}
\def\E{\mathbb{E}}
\title[Homotopy categories of admissible model structures]{Homotopy categories \\ of admissible model structures \\ on extriangulated categories}
\author[Shun-Jie Li, Yang Gao, Pu Zhang] {Shun-Jie Li, Yang Gao, Pu Zhang$^*$ \\ \\  School of Mathematical Sciences \\
Shanghai Jiao Tong University,  \ Shanghai \ 200240, \ China }
\thanks{$^*$ Corresponding author}
\thanks{lishunjie$\symbol{64}$sjtu.edu.cn \ \ \ \ gyang112358$\symbol{64}$sjtu.edu.cn \ \ \ \ pzhang$\symbol{64}$sjtu.edu.cn}
\begin{document}

\begin{abstract} The extriangulated category is a simultaneous generalization of exact categories and triangulated categories. H. Nakaoka and Y. Palu have proved that
the homotopy category of an admissible model structure on a weakly idempotent complete extriangulated category is a triangulated category.
Using the classic construction of distinguished triangles given by A. Heller and D. Happel,
this paper provides an alternative proof of Nakaoka - Palu Theorem. In fact,
the class $\Delta$ of distinguished triangles in the present paper and the class $\widetilde{\Delta}$ of distinguished triangles in \cite{NP} have the relation  $\Delta = - \widetilde{\Delta}$,
and hence the two triangulated structures on the homotopy category are isomorphic.

\vskip5pt

Keywords:   (weakly idempotent complete) extriangulated category; homotopy category; triangulated category; (admissible) model structure; Hovey triple

\vskip5pt

2020 Mathematics Subject Classification.   18N40, 18G80, 18N55

\end{abstract}

\maketitle

\vspace{-20pt} \section {\bf Introduction}

Extriangulated categories, introduced by H. Nakaoka and Y. Palu \cite{NP}, provide a simultaneous generalization of exact categories and triangulated categories.
There are indeed many extriangulated categories which are neither exact nor triangulated (see e.g. \cite [2.18, 3.30]{NP}).
Recent progress shows that extriangulated categories have provided a unified framework and a powerful approach for studying problems
in homological algebra, Auslander-Reiten theory, tilting theory, Gorenstein homological algebra, and cotorsion pairs etc. (see e.g. \cite{CZZ, HZZ, ZZ1,  HLN1, NOS, INP}).

\vskip5pt

The model structures and their homotopy categories, introduced by D. Quillen \cite{Q1}, are widely used in algebra and topology,
especially for representation theory, homological algebra, $K$-theory and homotopy theory (see e.g. \cite{Q2, Q3, H1, MV, HSS, Hir,  G2}).
M. Hovey \cite{H2} have proposed abelian model structures and discovered a one-to-one correspondence between abelian model structures and Hovey triples.
A. Beligiannis and I. Reiten \cite{BR} have constructed weakly projective model structures and
discovered a one-to-one correspondence between weakly projective model structures and hereditary complete cotorsion pairs with contravariantly finite core (see also \cite{CLZ}).
Both the model structures are defined on abelian categories, and the two correspondences have established a connection between model structures and representation theory, via cotorsion pairs.

\vskip5pt

Hovey's correspondence has been extended to weakly idempotent complete exact categories by J. Gillespie \cite{G1},
and to triangulated categories with a proper class by X. Y. Yang \cite{Y}.
Nakaoka and Palu \cite[Section 5]{NP} further extends this correspondence to weakly idempotent complete extriangulated categories.
More surprisingly, they prove that the homotopy category of an admissible model structure on a weakly idempotent complete extriangulated category is triangulated. See \cite[Theorem 6.20]{NP}.
Since the two cotorsion pairs associated with an admissible model structure are not necessarily hereditary,
and the corresponding cofibrant - fibrant objects do not necessarily form a Frobenius category,
Nakaoka - Palu Theorem is remarkable, and it is previously unknown even for abelian model structures.

\vskip5pt

For a model structure $(\mathrm{CoFib}, \ \mathrm{Fib}, \ \mathrm{Weq})$ on a category $\A$,
Quillen's {\it homotopy category} $\mathrm{Ho}(\mathcal{A})$ is by definition the localization category $\mathcal{A}[\mathrm{Weq}^{-1}]$ of $\A$
with respect to the class $\mathrm{Weq}$ of weak equivalences. Denote by $\gamma: \mathcal{A}\longrightarrow \mathrm{Ho}(\mathcal{A})$ the localization functor.
Quillen \cite[I, 3.10, Remark]{Q1} defined functors $\Sigma, \ \Omega: \mathrm{Ho}(\mathcal{A})\longrightarrow  \mathrm{Ho}(\mathcal{A})$, and pointed out that they satisfy
the one-sided version of the axioms of a triangulated category.
However, in general $\Sigma$ and $\Omega$ are not auto-equivalences of $\mathrm{Ho}(\mathcal{A})$, and $\mathrm{Ho}(\mathcal{A})$ is not triangulated.
In fact, $\mathrm{Ho}(\mathcal{A})$ is pre-triangulated in the sense of \cite{BR}: i.e.,
$(\Sigma, \Omega)$ is an adjoint pair,  $(\mathrm{Ho}(\mathcal{A}), \Omega)$ is left triangulated, and $(\mathrm{Ho}(\mathcal{A}), \Sigma)$ is right triangulated,
together with some compatible conditions.

\vskip5pt

For an admissible model structure on a weakly idempotent complete extriangulated category $(\mathcal{A}, \ \mathbb{E}, \ \mathfrak{s})$, 
Nakaoka and Palu \cite{NP} have reconstructed the functors $\Sigma$ and $\Omega$, 
by using the completeness of cotorsion pairs $(\mathcal{C}, \ \mathcal{F}\cap\mathcal{W})$ and $(\mathcal{C}\cap\mathcal{W}, \ \mathcal{F})$, 
introduced a group homomorphism $l: \mathbb{E}(Z, X) \longrightarrow \mathrm{Ho}(\mathcal{A})(Z, \Sigma X)$,
and then proved that $\Sigma$ and $\Omega$ are quasi-inverse of each other.
They have defined a standard triangle
$$X\overset{\gamma(f)}{\longrightarrow}Y\overset{\gamma(g)}{\longrightarrow}Z\overset{l(\delta)}{\longrightarrow} \Sigma X$$
 in $\mathrm{Ho}(\mathcal{A})$ as the image under the localization functor $\gamma$ of an $\mathbb{E}$-triangle $X\overset{f}{\longrightarrow}Y\overset{g}{\longrightarrow}Z\overset{\delta}{\dashrightarrow}$ in $\mathcal A$,  together with the ``connecting morphism" $l(\delta): Z\longrightarrow \Sigma X$;
and finally they can prove that $(\mathrm{Ho}(\mathcal{A}), \ \Sigma, \ \widetilde{\Delta})$ is a triangulated category (\cite[Theorem 6.20]{NP}),
where $\widetilde{\Delta}$ is the class of triangles in $\mathrm{Ho}(\mathcal{A})$ isomorphic to the standard triangles.
When $\mathcal{A}$ is a weakly idempotent complete exact category, a more direct proof of Nakaoka - Palu Theorem can be found in Gillespie \cite[Theorem 6.34]{G2}.

\vskip5pt

A good theorem should allow different proofs. In this paper a standard triangle
in $\mathrm{Ho}(\mathcal{A})$ induced by a morphism $f:X\longrightarrow Y$  in $\mathcal A$  is defined to be
$$X\overset{\gamma(f)}{\longrightarrow}Y\overset{\gamma(g)}{\longrightarrow} C(f)\overset{\gamma(h)}{\longrightarrow}\Sigma X$$
which is induced by the commutative diagram in $\mathcal{A}$
\[\xymatrix@R=0.5cm{X \ar[r] \ar[d]_{f} & M \ar[r] \ar[d] & \Sigma X \ar@{-->}[r]^-{\delta_X} \ar@{=}[d] & {} \\
Y \ar[r]^{g} & C(f) \ar[r]^{h} & \Sigma X \ar@{-->}[r]^-{f_*\delta_X} & {}}\]
where $X\longrightarrow M\longrightarrow \Sigma X\overset{\delta_X}{\dashrightarrow}$ is the suspension sequence of $X$.
This construction of standard triangles is similar to the one in the stable category of a Frobenius category, by A. Heller \cite{Heller} and D. Happel  \cite{Hap1, Hap2}. The corresponding class of distinguished triangles is denoted by $\Delta$.
The main result Theorem \ref{thm: main} of this paper claims that
$(\mathrm{Ho}(\mathcal{A}), \ \Sigma, \ \Delta)$ is a triangulated category. This gives an alterative proof of  Nakaoka - Palu Theorem.

\vskip5pt

The class $\Delta$ of distinguished triangles in this paper and the class $\widetilde{\Delta}$ of distinguished triangles in \cite{NP} have the relation  $$\Delta = - \widetilde{\Delta}$$
i.e., a triangle $X\overset{f}{\longrightarrow}Y\overset{g}{\longrightarrow}Z\overset{h}{\longrightarrow}\Sigma X$ is in $\Delta$
if and only if $X\overset{-f}{\longrightarrow}Y\overset{-g}{\longrightarrow}Z\overset{-h}{\longrightarrow}\Sigma X$ is in $\widetilde{\Delta}$. Thus, one has a triangle-isomorphism
$(\mathrm{Ho}(\mathcal{A}), \ \Sigma, \ \Delta) \cong (\mathrm{Ho}(\mathcal{A}), \ \Sigma, \ \widetilde{\Delta}).$
See Theorem \ref{thm: tri-relation}.  It is well-known that $\Delta \ne  \widetilde{\Delta}$ in general.

\vskip5pt

Nakaoka - Palu Theorem  is a remarkable discovery, a key technique in the original proof in \cite[Theorem 6.20]{NP} is the introduction of the ``connecting morphism" which needs a long argument.
What is interesting is that the two kinds of distinguished triangles in $\mathrm{Ho}(\mathcal{A})$
are induced by $\mathbb{E}$-triangles in $\mathcal A$: a standard triangle
$X\overset{\gamma(f)}{\longrightarrow}Y\overset{\gamma(g)}{\longrightarrow}Z\overset{l(\delta)}{\longrightarrow} \Sigma X$ in \cite{NP}  is induced by the $\mathbb{E}$-triangle
$X\overset{f}{\longrightarrow}Y\overset{g}{\longrightarrow}Z\overset{\delta}{\dashrightarrow}$, while
a standard triangle $X\overset{\gamma(f)}{\longrightarrow}Y\overset{\gamma(g)}{\longrightarrow} C(f)\overset{\gamma(h)}{\longrightarrow}\Sigma X$ in this paper is induced by the $\mathbb{E}$-triangle
$Y\overset{g}{\longrightarrow} C(f)\overset{h}{\longrightarrow}\Sigma X\overset{f_*\delta_X}{\dashrightarrow}$.

\section{\bf Preliminaries}

\subsection{Extriangulated categories} Let $\mathcal{A}$ be an additive category with a biadditive functor $\mathbb{E}:\mathcal{A}^{\mathrm{op}}\times \mathcal{A} \longrightarrow \mathrm{Ab}$.
An element $\delta\in\mathbb{E}(Z,X)$ is called an $\mathbb{E}$-{\it extension}, and the zero element $0\in \mathbb{E}(Z,X)$ is called the {\it split $\mathbb{E}$-extension}.
For $\delta\in\mathbb{E}(Z,X)$, $f:X\longrightarrow X'$, $g:Z'\longrightarrow Z$,
one has $\mathbb{E}$-extensions $f_*\delta: =\mathbb{E}(Z,f)(\delta)\in\mathbb{E}(Z,X')$ and $g^*\delta: = \mathbb{E}(g,X)(\delta)\in\mathbb{E}(Z',X)$. Since $\mathbb{E}$ is a bifunctor, by definition
one has  $g^*f_*\delta=f_*g^*\delta \in \E(Z',X').$

\vskip5pt

Let $\delta_1\in \mathbb{E}(Z,X)$ and $\delta_2\in \mathbb{E}(Z',X')$. By the additivity of $\mathbb{E}$, one has a natural isomorphism
$$\mathbb{E}(Z\oplus Z', X\oplus X')\cong \mathbb{E}(Z,X)\oplus \mathbb{E}(Z,X')\oplus\mathbb{E}(Z',X)\oplus\mathbb{E}(Z',X').$$
Denote by $\delta_1\oplus\delta_2\in\mathbb{E}(Z\oplus Z', X\oplus X')$ the element corresponding to $(\delta_1,0,0,\delta_2)$ through this isomorphism. 

Two sequences  $X \overset{f}{\longrightarrow} Y \overset{g}{\longrightarrow} Z$ and $X \overset{f'}{\longrightarrow} Y' \overset{g'}{\longrightarrow} Z$ of morphisms in $\mathcal{A}$ are  {\it equivalent}, if there is an isomorphism $\varphi:Y \longrightarrow Y'$ such that $f' = \varphi f, \ g = g'\varphi$.
Denote by $[X \overset{f}{\longrightarrow} Y \overset{g}{\longrightarrow} Z]$ the equivalence class of $X \overset{f}{\longrightarrow} Y \overset{g}{\longrightarrow} Z$. Put
$$[X \overset{f}{\longrightarrow} Y \overset{g}{\longrightarrow} Z]\oplus[X' \overset{f'}{\longrightarrow} Y' \overset{g'}{\longrightarrow} Z'] = [X\oplus X'\overset{f\oplus f'}{\longrightarrow }Y\oplus Y'\overset{g\oplus g'}{\longrightarrow }Z\oplus Z'].$$

\begin{defn}[{\cite[2.9]{NP}}] \ Let $\mathcal{A}$ be an additive category with a biadditive functor $\mathbb{E}: \mathcal{A}^{\mathrm{op}}\times \mathcal{A} \longrightarrow \mathrm{Ab}$.
An {\it additive realization} $\mathfrak{s}$ of $\mathbb{E}$ is a correspondence, which maps any $\mathbb{E}$-extension $\delta\in \mathbb{E}(Z,X)$ to an equivalence class
$\mathfrak{s}(\delta) = [X\overset{f}{\longrightarrow}Y\overset{g}{\longrightarrow}Z]$, satisfying the following conditions:

\vskip5pt

$(1)$ \ Let $\delta\in \mathbb{E}(Z,X)$ and $\delta'\in \mathbb{E}(Z',X')$ be $\mathbb{E}$-extensions with $\mathfrak{s}(\delta)=[X\overset{f}{\longrightarrow}Y\overset{g}{\longrightarrow}Z]$ and $\mathfrak{s}(\delta')=[X'\overset{f'}{\longrightarrow}Y'\overset{g'}{\longrightarrow}Z']$. Then for any morphism $u:X \longrightarrow X'$ and $w:Z \longrightarrow Z'$ with $u_*\delta=w^*\delta'$, there exists a morphism $v:Y \longrightarrow Y'$ such that the following diagram commutes.
$$\xymatrix@R=0.6cm{X \ar[r]^{f} \ar[d]_{u} & Y \ar[r]^{g} \ar@{.>}[d]_{v} & Z \ar[d]^{w} \\X' \ar[r]^{f'} & Y' \ar[r]^{g'} & Z'}$$
In this case, the sequence $X\overset{f}{\longrightarrow}Y\overset{g}{\longrightarrow}Z$ of morphism is called a {\it realization} of $\delta\in \mathbb{E}(Z,X)$, and denoted by $X\overset{f}{\longrightarrow}Y\overset{g}{\longrightarrow}Z\overset{\delta}{\dashrightarrow}$.

$(2)$ \ For the split $\mathbb{E}$-extension $0\in \mathbb{E}(Z,X)$,   $\mathfrak{s}(0)=[X\overset{\left(\begin{smallmatrix}1\\0\end{smallmatrix}\right)}{\longrightarrow}X\oplus Z\overset{\left(\begin{smallmatrix}0 & 1\end{smallmatrix}\right) }{\longrightarrow}Z].$

\vskip5pt

$(3)$ \ For $\mathbb{E}$-extensions $\delta_1\in \mathbb{E}(Z,X)$, $\delta_2\in \mathbb{E}(Z',X')$, one has $\mathfrak{s}(\delta_1\oplus \delta_2)=\mathfrak{s}(\delta_1)\oplus \mathfrak{s}(\delta_2)$.
\end{defn}

\begin{defn} [{\cite[Definition 2.12]{NP}}] \ Let $\mathcal{A}$ be an additive category. A triple $(\mathcal{A}, \ \mathbb{E}, \ \mathfrak{s})$ is an {\it extriangulated category}, if it satisfies the following conditions $(\mathrm{ET} 1)$, $(\mathrm{ET} 2)$, $(\mathrm{ET} 3)$, $(\mathrm{ET} 3)^{\mathrm{op}}$, $(\mathrm{ET} 4)$, $(\mathrm{ET} 4)^{\mathrm{op}}$.

\vskip5pt

$(\mathrm{ET} 1)$ \ \ $\mathbb{E}:\mathcal{A}^{\mathrm{op}} \times \mathcal{A} \longrightarrow \mathrm{Ab}$ is a biadditive functor.

\vskip5pt

$(\mathrm{ET} 2)$ \ \ $\mathfrak{s}$ is an additive realization of $\mathbb{E}$.

\vskip5pt

$(\mathrm{ET} 3)$ \ \ For any realizations of $\mathbb{E}$-extensions $\delta\in\mathbb{E}(Z,X)$ and $\delta'\in\mathbb{E}(Z',X')$:
$$\xymatrix@R=0.4cm{X \ar[r]^{f} \ar[d]_{u} & Y \ar[r]^{g} \ar[d]_{v} & Z \ar@{-->}[r]^{\delta} \ar@{.>}[d]_{w} & \\
					X' \ar[r]^{f'} & Y' \ar[r]^{g'} & Z' \ar@{-->}[r]^{\delta'} &}$$
if $vf=f'u$, then there exists $w:Z \longrightarrow Z'$ such that $wg=g'v$ and $u_*\delta=w^*\delta'$.

\vskip5pt

$(\mathrm{ET} 3)^{\mathrm{op}}$ \ \ For any realizations of $\mathbb{E}$-extensions $\delta\in\mathbb{E}(Z,X)$ and $\delta'\in\mathbb{E}(Z',X')$:
$$\xymatrix@R=0.4cm{X \ar[r]^{f} \ar@{.>}[d]_{u} & Y \ar[r]^{g} \ar[d]_{v} & Z \ar@{-->}[r]^{\delta} \ar[d]_{w} & \\
					X' \ar[r]^{f'} & Y' \ar[r]^{g'} & Z' \ar@{-->}[r]^{\delta'} &}$$
if $wg=g'v$, then there exists $u:X \longrightarrow X'$ such that $f'u=vf$ and $u_*\delta=w^*\delta'$.

\vskip5pt

$(\mathrm{ET} 4)$ \ \ For any realizations of $\mathbb{E}$-extensions $\delta\in\mathbb{E}(Z',X)$ and  $\varepsilon\in\mathbb{E}(X',Y)$:
$$ \xymatrix@R=0.6cm{
X \ar[r]^{f} \ar@{=}[d] & Y \ar[r]^{f'} \ar[d]_{g} & Z' \ar@{-->}[r]^{\delta} \ar@{.>}[d]^{d} & \\
X \ar@{.>}[r]^{h} & Z \ar@{.>}[r]^{h'} \ar[d]_{g'} & Y' \ar@{.>}[d]^{e} \ar@{-->}[r]^{\zeta} & \\
& X' \ar@{=}[r] \ar@{-->}[d]_{\varepsilon} & X' \ar@{-->}[d]^{\eta} & \\
& & & }$$
there are realizations of $\mathbb{E}$-extensions $\zeta\in\mathbb{E}(Y',X)$ and $\eta\in\mathbb{E}(X',Z')$, such that the diagram commutes and $d^*\zeta=\delta$, $f'_*\varepsilon=\eta$, $f_*\zeta=e^*\varepsilon$.

\vskip5pt

$(\mathrm{ET} 4)^{\mathrm{op}}$ \ \  For any realizations of $\mathbb{E}$-extensions $\zeta\in\mathbb{E}(Y',X)$ and $\eta\in\mathbb{E}(X',Z')$:
$$ \xymatrix@R=0.6cm{
X \ar@{.>}[r]^{f} \ar@{=}[d] & Y \ar@{.>}[r]^{f'} \ar@{.>}[d]_{g} & Z' \ar@{-->}[r]^{\delta} \ar[d]^{d} & \\
X \ar[r]^{h} & Z \ar[r]^{h'} \ar@{.>}[d]_{g'} & Y' \ar[d]^{e} \ar@{-->}[r]^{\zeta} & \\
& X' \ar@{=}[r] \ar@{-->}[d]_{\varepsilon} & X' \ar@{-->}[d]^{\eta} & \\
& & & }$$
there are realizations of $\mathbb{E}$-extensions $\delta\in\mathbb{E}(Z',X)$ and $\varepsilon\in\mathbb{E}(X',Y)$, such that the diagram commutes and $d^*\zeta=\delta$, $f'_*\varepsilon=\eta$, $f_*\zeta=e^*\varepsilon$.

\vskip5pt

If this is the case, a realization $X\overset{f}{\longrightarrow}Y\overset{g}{\longrightarrow}Z\overset{\delta}{\dashrightarrow}$ of an $\mathbb{E}$-extension $\delta\in \mathbb{E}(Z,X)$ is called an {\it $\mathbb{E}$-triangle},  
$f$  an {\it $\mathbb{E}$-inflation}, and $g$ an {\it $\mathbb{E}$-deflation}.
\end{defn}

\begin{thm}  [{\rm \cite[Corollary 3.12]{NP}}] \label{lem:homological fundamental}  \ Let $(\mathcal{A},\mathbb{E}, \mathfrak{s})$ be an extriangulated category. For any $\mathbb{E}$-triangle $X\overset{f}{\longrightarrow}Y\overset{g}{\longrightarrow}Z\overset{\delta}{\dashrightarrow}$ and object $W\in \mathcal{A}$, there are exact sequences of abelian groups$:$
$$
\mathcal{A}(W,X) \overset{\mathcal{A}(W,f)}{\longrightarrow}\mathcal{A}(W,Y)\overset{\mathcal{A}(W,g)}{\longrightarrow}\mathcal{A}(W,Z)\overset{(\delta_{\#})_W}{\longrightarrow}\mathbb{E}(W,X)\overset{\mathbb{E}(W,f)}{\longrightarrow}\mathbb{E}(W,Y)
\overset{\mathbb{E}(W,g)}{\longrightarrow}\mathbb{E}(W,Z)
$$
and $$
\mathcal{A}(Z,W) \overset{\mathcal{A}(g,W)}{\longrightarrow}\mathcal{A}(Y,W)\overset{\mathcal{A}(f,W)}{\longrightarrow}\mathcal{A}(X,W)\overset{(\delta^{\#})_W}{\longrightarrow}\mathbb{E}(Z,W)\overset{\mathbb{E}(g,W)}
{\longrightarrow}\mathbb{E}(Y,W)\overset{\mathbb{E}(f,W)}{\longrightarrow}\mathbb E(X,W)
$$
where $(\delta_{\#})_W \varphi:=\varphi^*\delta$ and $(\delta^{\#})_W \psi:=\psi_*\delta$.
\end{thm}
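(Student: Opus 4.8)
The plan is to verify exactness term by term, and to get the second (contravariant) sequence from the first by duality. Indeed $(\mathcal{A}^{\mathrm{op}}, \mathbb{E}^{\mathrm{op}}, \mathfrak{s}^{\mathrm{op}})$, with $\mathbb{E}^{\mathrm{op}}(X,Z) := \mathbb{E}(Z,X)$, is again extriangulated, the axioms $(\mathrm{ET}3)$ and $(\mathrm{ET}3)^{\mathrm{op}}$ (resp. $(\mathrm{ET}4)$ and $(\mathrm{ET}4)^{\mathrm{op}}$) simply trading places; the given $\mathbb{E}$-triangle becomes $Z \xrightarrow{g} Y \xrightarrow{f} X \dashrightarrow \delta$, and its first long exact sequence read inside $\mathcal{A}^{\mathrm{op}}$ is exactly the second sequence (with the roles of $\varphi^*$ and $\psi_*$ swapped). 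So I would only treat the first sequence.

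Before checking exactness I would record the three fundamental relations $gf = 0$, $f_*\delta = 0$ and $g^*\delta = 0$. Each falls out of $(\mathrm{ET}3)$ or $(\mathrm{ET}3)^{\mathrm{op}}$ applied to a morphism between the given $\mathbb{E}$-triangle and a split one: comparing the split $\mathbb{E}$-triangle $X \xrightarrow{\binom{1}{0}} X\oplus Z \xrightarrow{(0\ 1)} Z \dashrightarrow 0$ with $\delta$ along the pair $(1_X, (f\ 0))$ forces $gf = 0$ through $(\mathrm{ET}3)$, and the mirror-image choices give the other two. Together with the bifunctoriality identity $g^*f_*\delta = f_* g^*\delta$ and the additivity of $\mathbb{E}$, these yield $\operatorname{im}\subseteq\ker$ at every spot.

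For the reverse inclusions at the two $\mathcal{A}$-terms I would argue by lifting. At $\mathcal{A}(W,Y)$, a map $b$ with $gb = 0$ is precisely a morphism from the split $\mathbb{E}$-triangle $W \xrightarrow{1} W \to 0 \dashrightarrow 0$ into $\delta$ along the right-hand columns, so $(\mathrm{ET}3)^{\mathrm{op}}$ returns $a : W \to X$ with $fa = b$. At $\mathcal{A}(W,Z)$, a map $c$ with $c^*\delta = 0$ realizes as a split $\mathbb{E}$-triangle $X \to E \to W \dashrightarrow c^*\delta$; the realization axiom for $\mathfrak{s}$ supplies a morphism $E \to Y$ over $(1_X, c)$, and precomposing with a splitting $W \to E$ produces $b : W \to Y$ with $gb = c$.

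The crux is exactness at the two $\mathbb{E}$-terms. At $\mathbb{E}(W,X)$, given $\theta$ with $f_*\theta = 0$, I would realize $\theta$ as $X \xrightarrow{x} E \to W \dashrightarrow \theta$; the vanishing of $f_*\theta$ yields, via the realization axiom and the split triangle over $Y$, a morphism $v : E \to Y$ with $vx = f$, and then $(\mathrm{ET}3)$ along $(1_X, v)$ delivers $\varphi : W \to Z$ with $\varphi^*\delta = \theta$. The genuinely extriangulated step—and what I expect to be the main obstacle—is exactness at $\mathbb{E}(W,Y)$: given $\rho$ with $g_*\rho = 0$, I would feed the $\mathbb{E}$-triangles $X \xrightarrow{f} Y \xrightarrow{g} Z$ and $Y \to E \to W \dashrightarrow \rho$ into $(\mathrm{ET}4)$. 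Its output is an $\mathbb{E}$-extension $\zeta \in \mathbb{E}(Y', X)$ and a triangle $Z \xrightarrow{d} Y' \xrightarrow{e} W \dashrightarrow \eta$ with $\eta = g_*\rho = 0$, so that $e$ splits; pulling $\zeta$ back along a section $s$ of $e$ and combining $f_*\zeta = e^*\rho$ with bifunctoriality gives $\sigma := s^*\zeta$ with $f_*\sigma = \rho$. Making the octahedral bookkeeping of $(\mathrm{ET}4)$ match this pushout-and-splitting is the delicate part; everything else reduces to $(\mathrm{ET}3)$, its dual, and the additivity of $\mathbb{E}$.
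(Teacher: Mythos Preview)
The paper does not actually prove this statement; it is quoted verbatim as \cite[Corollary 3.12]{NP} and used as a black box throughout. So there is no ``paper's own proof'' to compare against.

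That said, your outline is correct and is essentially the argument Nakaoka and Palu give in \cite{NP}. The duality reduction, the three vanishing relations $gf=0$, $f_*\delta=0$, $g^*\delta=0$, and the lifting arguments at the $\mathcal{A}$-terms all go through as you describe. Your treatment of exactness at $\mathbb{E}(W,X)$ is fine once you note that the factorization $f=vx$ is obtained exactly as in your $\mathcal{A}(W,Z)$ step (realize $f_*\theta=0$ as a split triangle and read off the splitting). The $(\mathrm{ET}4)$ argument at $\mathbb{E}(W,Y)$ is the right one: with your notation, the output relation $\eta=g_*\rho=0$ forces the triangle $Z\xrightarrow{d}Y'\xrightarrow{e}W\dashrightarrow 0$ to be split, so a section $s$ of $e$ exists, and then $f_*(s^*\zeta)=s^*(f_*\zeta)=s^*e^*\rho=(es)^*\rho=\rho$ exactly as you wrote. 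There is no circularity and no missing idea; the ``octahedral bookkeeping'' you flag as delicate is in fact routine once $(\mathrm{ET}4)$ is stated in the form the paper uses.
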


\begin{cor}\label{cor: factorization}\ Let $X\overset{f}{\longrightarrow}Y\overset{g}{\longrightarrow}Z \overset{\delta}{\dashrightarrow}$ be an $\mathbb{E}$-triangle in an extriangulated category $\A$.

\vskip5pt

$(1)$ \ For any morphism $\varphi:W\longrightarrow Z$ in $\mathcal{A}$, \  $\varphi^*\delta=0$ if and only if $\varphi$ factors through $g$.

\vskip5pt

$(2)$ \ For any morphism $\psi:X\longrightarrow W$ in $\mathcal{A}$, \  $\psi_*\delta=0$ if and only if $\psi$ factors through $f$.
\end{cor}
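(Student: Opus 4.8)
The plan is to read both equivalences directly off the two long exact sequences of abelian groups provided by Theorem \ref{lem:homological fundamental}, applied to the test object $W$. The key observation is that the connecting maps $(\delta_{\#})_W$ and $(\delta^{\#})_W$ are precisely the operations $\varphi \mapsto \varphi^*\delta$ and $\psi \mapsto \psi_*\delta$ appearing in the statement, so each of the conditions ``$\varphi^*\delta = 0$'' and ``$\psi_*\delta = 0$'' is literally the assertion that the relevant morphism lies in the kernel of one of these connecting maps. Thus the whole corollary reduces to exactness at a single middle $\Hom$-term in each sequence.

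For part $(1)$, I would use the first exact sequence and focus on exactness at $\mathcal{A}(W,Z)$, namely the segment
$$\mathcal{A}(W,Y) \overset{\mathcal{A}(W,g)}{\longrightarrow} \mathcal{A}(W,Z) \overset{(\delta_{\#})_W}{\longrightarrow} \mathbb{E}(W,X).$$
Exactness gives $\Ker (\delta_{\#})_W = \operatorname{im}\mathcal{A}(W,g)$. Since $(\delta_{\#})_W(\varphi) = \varphi^*\delta$ and $\mathcal{A}(W,g)(\psi) = g\psi$, the condition $\varphi^*\delta = 0$ is equivalent to $\varphi$ lying in the image of $\mathcal{A}(W,g)$, i.e. to the existence of some $\psi:W \longrightarrow Y$ with $\varphi = g\psi$; this is exactly the assertion that $\varphi$ factors through $g$.

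For part $(2)$, I would apply the same reasoning to the second exact sequence, this time using exactness at $\mathcal{A}(X,W)$, that is, the segment
$$\mathcal{A}(Y,W) \overset{\mathcal{A}(f,W)}{\longrightarrow} \mathcal{A}(X,W) \overset{(\delta^{\#})_W}{\longrightarrow} \mathbb{E}(Z,W).$$
Here $(\delta^{\#})_W(\psi) = \psi_*\delta$ and $\mathcal{A}(f,W)(\chi) = \chi f$, so $\psi_*\delta = 0$ is equivalent to $\psi = \chi f$ for some $\chi:Y \longrightarrow W$, which is precisely the statement that $\psi$ factors through $f$.

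There is no substantive obstacle here: the entire content is the exactness of the two sequences at the middle $\Hom$-term, which is already supplied by Theorem \ref{lem:homological fundamental}. The only point requiring a little care is the bookkeeping, matching the variance of the two bifunctorial actions with the correct placement of the test object $W$ (covariant slot for $(1)$, contravariant slot for $(2)$) and checking that the composition conventions identify ``lies in the image of the induced map'' with ``factors through''; but this is routine.
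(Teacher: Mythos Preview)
Your proposal is correct and is exactly the intended argument: the paper states this as an immediate corollary of Theorem~\ref{lem:homological fundamental} without further proof, and your reading of the two equivalences from exactness at $\mathcal{A}(W,Z)$ and $\mathcal{A}(X,W)$ is precisely what that is meant to convey.
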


\begin{cor}[\rm {\cite[3.6]{NP}}]\label{cor:iso 2 of 3}  \ Let $(\mathcal{A},\mathbb{E}, \mathfrak{s})$ be an extriangulated category. Suppose that there is a commutative diagram of $\mathbb{E}$-triangles
$$\xymatrix@R=0.4cm{X \ar[r]^{f} \ar[d]_{u} & Y \ar[r]^{g} \ar[d]_{v} & Z \ar@{-->}[r]^{\delta} \ar[d]_{w} & \\
X' \ar[r]^{f'} & Y' \ar[r]^{g'} & Z' \ar@{-->}[r]^{\delta'} &}$$
with $u_*\delta=w^*\delta'$. If any two of $u$, $v$, $w$ are isomorphisms, then so is the third.
\end{cor}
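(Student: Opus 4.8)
The plan is to deduce all three implications from the two homological long exact sequences of Theorem \ref{lem:homological fundamental}, together with the Yoneda lemma; the argument is purely formal and uses neither the realization axioms directly nor weak idempotent completeness. First I would fix an object $W\in\mathcal{A}$ and apply the covariant functors $\mathcal{A}(W,-)$ and $\mathbb{E}(W,-)$ to the two $\mathbb{E}$-triangles, obtaining a ladder of exact sequences of abelian groups
\[\mathcal{A}(W,X)\to\mathcal{A}(W,Y)\to\mathcal{A}(W,Z)\to\mathbb{E}(W,X)\to\mathbb{E}(W,Y)\]
and its primed analogue, with vertical maps induced by $u,v,w$. All squares coming from the bifunctoriality of $\mathcal{A}$ and $\mathbb{E}$ commute automatically; the only point to check is the connecting square, and for $\varphi\in\mathcal{A}(W,Z)$ one computes
\[\mathbb{E}(W,u)\big((\delta_{\#})_W\varphi\big)=u_*(\varphi^*\delta)=\varphi^*(u_*\delta)=\varphi^*(w^*\delta')=(w\varphi)^*\delta'=(\delta'_{\#})_W\big(\mathcal{A}(W,w)\varphi\big),\]
where the middle equalities use the bifunctoriality identity $u_*\varphi^*=\varphi^*u_*$ and the hypothesis $u_*\delta=w^*\delta'$. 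Dually, applying $\mathcal{A}(-,W)$ and $\mathbb{E}(-,W)$ produces a second ladder of exact sequences whose connecting square commutes by the same computation with $\psi_*$ in place of $\varphi^*$.

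The two ``corner'' cases are then immediate from the five lemma plus Yoneda. If $v$ and $w$ are isomorphisms, then in the contravariant ladder the term $\mathcal{A}(X,W)$ occupies the central position, and its four neighbouring vertical maps $\mathcal{A}(w,W),\ \mathcal{A}(v,W),\ \mathbb{E}(w,W),\ \mathbb{E}(v,W)$ are all isomorphisms; the five lemma then makes $\mathcal{A}(u,W)$ an isomorphism for every $W$, so $u$ is an isomorphism by the (contravariant) Yoneda lemma. Symmetrically, if $u$ and $v$ are isomorphisms, then $\mathcal{A}(W,Z)$ is central in the covariant ladder, the five lemma forces $\mathcal{A}(W,w)$ to be a natural isomorphism, and $w$ is an isomorphism.

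The remaining case, $u,w$ isomorphisms $\Rightarrow v$ isomorphism, is the one I expect to be the real obstacle, since $\mathcal{A}(W,Y)$ (resp.\ $\mathcal{A}(Y,W)$) sits in the \emph{second} position of its row, so the five lemma cannot be centred there. Here I would instead use the surjective form of the four lemma at the second position: in the covariant ladder the maps $\mathcal{A}(W,u)$ and $\mathcal{A}(W,w)$ are epimorphisms and $\mathbb{E}(W,u)$ is a monomorphism, and a short diagram chase then shows $\mathcal{A}(W,v)$ is surjective for every $W$; evaluating at $W=Y'$ and lifting $\mathrm{id}_{Y'}$ exhibits a right inverse of $v$, so $v$ is a split epimorphism. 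The contravariant ladder gives symmetrically that $\mathcal{A}(v,W)$ is surjective for every $W$, and evaluating at $W=Y$ exhibits $v$ as a split monomorphism. Since a morphism that is simultaneously a split monomorphism and a split epimorphism is invertible (the two one-sided inverses then coincide), $v$ is an isomorphism, completing the proof.
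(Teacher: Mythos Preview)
The paper does not supply its own proof of this corollary; it is simply quoted from \cite[3.6]{NP}. Your argument is correct and is essentially the standard one: the two long exact sequences of Theorem~\ref{lem:homological fundamental}, together with the hypothesis $u_*\delta=w^*\delta'$ (which is exactly what makes the connecting squares commute, as you verify), yield commutative ladders to which the five lemma and Yoneda apply directly in the two outer cases. For the middle case your four-lemma chase is valid --- exactness at the second and third positions of each row, together with $\mathcal{A}(W,u)$ epi, $\mathcal{A}(W,w)$ iso and $\mathbb{E}(W,u)$ mono, does force $\mathcal{A}(W,v)$ to be surjective, and the dual ladder gives the split-mono half; a simultaneous split monomorphism and split epimorphism is indeed invertible.
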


Let $(\mathcal{A},\mathbb{E}, \mathfrak{s})$ be an extriangulated category. For any $\mathbb{E}$-triangle $X \overset{f}{\longrightarrow} Y \overset{g}\longrightarrow Z \dashrightarrow $ in $\cal A$, denote $Z$ by $\mathrm{Cone}(f)$, and 
$X$ by $\mathrm{CoCone}(g)$. By Corollary \ref{cor:iso 2 of 3}, $\mathrm{Cone}(f)$ and $\mathrm{CoCone}(g)$ are unique up to isomorphism.

\subsection{Diagram lemmas} 

\begin{prop}[{\cite[Proposition 3.15]{NP}}]\label{prop: pullback 1} \ Let $(\mathcal{A},\mathbb{E},\mathfrak{s})$ be an extriangulated category,  $X_1\overset{f_1}{\longrightarrow}Y_1\overset{g_1}{\longrightarrow}Z\overset{\delta_1}{\dashrightarrow}$ and $X_2\overset{f_2}{\longrightarrow}Y_2\overset{g_2}{\longrightarrow}Z\overset{\delta_2}{\dashrightarrow}$ be $\mathbb{E}$-triangles. Then there is a commutative diagram of $\mathbb{E}$-triangles
$$ \xymatrix@R=0.6cm{& X_2 \ar@{=}[r] \ar@{.>}[d]^{d_2} & X_2 \ar[d]^{f_2} & \\
X_1 \ar@{.>}[r]^{d_1} \ar@{=}[d] & W \ar@{.>}[r]^{e_1} \ar@{.>}[d]^{e_2} & Y_2 \ar@{-->}[r]^{\varepsilon_1} \ar[d]^{g_2} & \\
X_1 \ar[r]^{f_1} & Y_1 \ar[r]^{g_1} \ar@{-->}[d]^{\varepsilon_2} & Z \ar@{-->}[r]^{\delta_1} \ar@{-->}[d]^{\delta_2} & \\& & & \\}$$
such that $(g_2)^*\delta_1=\varepsilon_1$, $(g_1)^*\delta_2=\varepsilon_2$, $(d_1)_*\delta_1+(d_2)_{*}\delta_2=0$.
\end{prop}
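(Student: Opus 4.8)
The statement is a base-change (pullback) construction over $Z$, and the plan is to build the central object $W$ by a diagonal pullback and then carve out the two inner $\mathbb{E}$-triangles with the octahedron axiom. First I would form the direct-sum $\mathbb{E}$-triangle $X_1\oplus X_2\xrightarrow{f_1\oplus f_2}Y_1\oplus Y_2\xrightarrow{g_1\oplus g_2}Z\oplus Z\dashrightarrow$ with extension $\delta_1\oplus\delta_2$ (legitimate by condition $(3)$ of an additive realization) and pull it back along the diagonal $\Delta=\left(\begin{smallmatrix}1\\1\end{smallmatrix}\right)\colon Z\to Z\oplus Z$. As $\Delta^{*}(\delta_1\oplus\delta_2)=(\delta_1,\delta_2)$ under the splitting $\mathbb{E}(Z,X_1\oplus X_2)\cong\mathbb{E}(Z,X_1)\oplus\mathbb{E}(Z,X_2)$, condition $(1)$ in the definition of an additive realization gives an $\mathbb{E}$-triangle $A\colon X_1\oplus X_2\xrightarrow{\left(\begin{smallmatrix}d_1 & d_2\end{smallmatrix}\right)}W\xrightarrow{p}Z\dashrightarrow$ with extension $\omega=(\delta_1,\delta_2)$, together with a morphism of $\mathbb{E}$-triangles over $(\mathrm{id}_{X_1\oplus X_2},\Delta)$ whose middle term $v=\left(\begin{smallmatrix}e_2\\e_1\end{smallmatrix}\right)\colon W\to Y_1\oplus Y_2$ supplies $e_1,e_2$. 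The two commuting squares of this morphism read off as $e_2d_1=f_1$, $e_1d_1=0$, $e_2d_2=0$, $e_1d_2=f_2$ and $g_1e_2=g_2e_1=p$, so every square of the target diagram commutes, with the displayed identities along the top row and the left column.

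Next I would realize the middle row and the middle column as genuine $\mathbb{E}$-triangles. Applying $(\mathrm{ET}4)$ to the split inflation $X_1\xrightarrow{\left(\begin{smallmatrix}1\\0\end{smallmatrix}\right)}X_1\oplus X_2$ followed by $\left(\begin{smallmatrix}d_1 & d_2\end{smallmatrix}\right)$, whose composite is $d_1$, gives an octahedron whose middle row is an $\mathbb{E}$-triangle $X_1\xrightarrow{d_1}W\to Y'\dashrightarrow$ and whose right column realizes $(\pi_2)_{*}\omega=\delta_2$, where $\pi_1,\pi_2$ are the projections of $X_1\oplus X_2$. By Corollary \ref{cor:iso 2 of 3} this right column is isomorphic to $X_2\xrightarrow{f_2}Y_2\xrightarrow{g_2}Z\dashrightarrow$, and transporting along the isomorphism identifies the middle row with $X_1\xrightarrow{d_1}W\xrightarrow{e_1}Y_2\dashrightarrow$; its extension is determined by the relation $f_{*}\zeta=e^{*}\varepsilon$ of $(\mathrm{ET}4)$ together with $(g_2)^{*}\delta_2=0$ (Corollary \ref{cor: factorization}$(1)$), which forces $\varepsilon_1=(g_2)^{*}\delta_1$. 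Symmetrically, applying $(\mathrm{ET}4)$ to $X_2\xrightarrow{\left(\begin{smallmatrix}0\\1\end{smallmatrix}\right)}X_1\oplus X_2\xrightarrow{\left(\begin{smallmatrix}d_1 & d_2\end{smallmatrix}\right)}W$ exhibits the middle column $X_2\xrightarrow{d_2}W\xrightarrow{e_2}Y_1\dashrightarrow$ as an $\mathbb{E}$-triangle with $\varepsilon_2=(g_1)^{*}\delta_2$, this time using $(g_1)^{*}\delta_1=0$.

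The compatibility $(d_1)_{*}\delta_1+(d_2)_{*}\delta_2=0$ is then immediate from $A$. Pushing the extension $\omega$ forward along its own inflation gives $\left(\begin{smallmatrix}d_1 & d_2\end{smallmatrix}\right)_{*}\omega=0$ by Corollary \ref{cor: factorization}$(2)$, since a morphism factors through itself; writing $\left(\begin{smallmatrix}d_1 & d_2\end{smallmatrix}\right)=d_1\pi_1+d_2\pi_2$ and using biadditivity of $\mathbb{E}$ in the covariant variable turns this into $(d_1)_{*}(\pi_1)_{*}\omega+(d_2)_{*}(\pi_2)_{*}\omega=0$, and $\omega=(\delta_1,\delta_2)$ yields $(d_1)_{*}\delta_1+(d_2)_{*}\delta_2=0$.

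The main obstacle is not any single axiom but the bookkeeping in the second step: one must verify that the deflation $W\to Y'$ and the object $Y'$ delivered by each octahedron can be identified—through the isomorphisms of Corollary \ref{cor:iso 2 of 3}—with the specific maps $e_1,e_2$ and objects $Y_2,Y_1$ coming from the diagonal pullback of the first step, so that the four inner $\mathbb{E}$-triangles and all nine squares assemble into a single commutative $3\times 3$ diagram. A priori an octahedron pins these maps down only up to a summand factoring through $f_1$ or $f_2$, detectable by the long exact sequences of Theorem \ref{lem:homological fundamental}, so some care with these corrections—and with the signs hidden in $\omega=(\delta_1,\delta_2)$—is what makes the argument delicate.
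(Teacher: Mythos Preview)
The paper does not give its own proof of this proposition; it is simply quoted from \cite[Proposition 3.15]{NP}. Your outline---realize $\Delta^{*}(\delta_1\oplus\delta_2)$ to obtain $W$, then apply $(\mathrm{ET}4)$ to each split inflation $X_i\hookrightarrow X_1\oplus X_2$ to carve out the middle row and column, and read off the three identities---is correct and is essentially the argument in \cite{NP}. The paper does prove the strengthened dual, Proposition~\ref{prop: pushout homotopy}, and the method used there (complete $\left(\begin{smallmatrix}f_1\\f_2\end{smallmatrix}\right)$ to an $\mathbb{E}$-triangle, then apply Proposition~\ref{prop: pullback right comp} twice) is precisely the dual of your diagonal-pullback-plus-two-octahedra strategy.

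One comment on the obstacle you raise at the end: it is smaller than you suggest. You do not need the realization morphism $v=\left(\begin{smallmatrix}e_2\\e_1\end{smallmatrix}\right)$ from your first paragraph at all. Each application of $(\mathrm{ET}4)$ already produces a deflation $h'\colon W\to Y'$ which, after transporting along the isomorphism $Y'\cong Y_2$ (resp.\ $Y_1$) supplied by Corollary~\ref{cor:iso 2 of 3}, satisfies $g_2e_1=p=g_1e_2$, $e_1d_1=0$, $e_1d_2=f_2$, $e_2d_1=f_1$, $e_2d_2=0$ directly from the commuting squares of the $(\mathrm{ET}4)$ diagram. These are exactly all the relations needed, so no long exact sequences or correction terms enter; the only genuine bookkeeping is carrying the extension $\zeta$ through Lemma~\ref{lem: extri iso} when replacing $Y'$ by $Y_2$, so that the middle-row extension becomes literally $(g_2)^{*}\delta_1$ rather than its pullback along the identifying isomorphism. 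Your computation in the second paragraph handles this correctly once unwound.
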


We also need the dual of Proposition \ref{prop: pullback 1}.

\begin{prop}\label{prop: pushout version 1} \
Let $(\mathcal{A},\mathbb{E},\mathfrak{s})$ be an extriangulated category, and $X\overset{f_1}{\longrightarrow}Y_1\overset{g_1}{\longrightarrow}Z_1\overset{\delta_1}{\dashrightarrow},\ X\overset{f_2}{\longrightarrow}Y_2\overset{g_2}{\longrightarrow}Z_2\overset{\delta_2}{\dashrightarrow}$ be $\mathbb{E}$-triangles. Then there is a commutative diagram of $\mathbb{E}$-triangles
$$\xymatrix@R=0.6cm{X \ar[r]^{f_1} \ar[d]_{f_2} & Y_1 \ar[r]^{g_1} \ar@{.>}[d]^{d_2} & Z_1 \ar@{-->}[r]^{\delta_1} \ar@{=}[d] & \\
Y_2 \ar@{.>}[r]^{d_1} \ar[d]_{g_2} & W \ar@{.>}[r]^{e_1} \ar@{.>}[d]^{e_2} & Z_1 \ar@{-->}[r]^{\varepsilon_1} &\\
Z_2 \ar@{=}[r] \ar@{-->}[d]_{\delta_2} & Z_2 \ar@{-->}[d]^{\varepsilon_2}&& \\
& & & } $$
such that $ (f_2)_*\delta_1=\varepsilon_1$, $(f_1)_*\delta_2=\varepsilon_2$, $(e_1)^*\delta_1+(e_2)^{*}\delta_2=0. $
\end{prop}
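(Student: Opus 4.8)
The plan is to derive this statement from Proposition \ref{prop: pullback 1} by the duality principle for extriangulated categories, exactly as the phrase ``the dual of Proposition \ref{prop: pullback 1}'' suggests. Recall from \cite{NP} that if $(\mathcal{A}, \mathbb{E}, \mathfrak{s})$ is extriangulated, then so is the opposite triple $(\mathcal{A}^{\mathrm{op}}, \mathbb{E}^{\mathrm{op}}, \mathfrak{s}^{\mathrm{op}})$, where $\mathbb{E}^{\mathrm{op}}(A, B) = \mathbb{E}(B, A)$, the realization $\mathfrak{s}^{\mathrm{op}}$ sends an extension to the equivalence class of the arrow-reversed sequence, and the two operations $f_*$ and $g^*$ interchange their roles (so that $(\mathrm{ET}4)$ and $(\mathrm{ET}4)^{\mathrm{op}}$ are exchanged). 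Under this correspondence the diagram of Proposition \ref{prop: pushout version 1} is precisely the diagram of Proposition \ref{prop: pullback 1} read in $\mathcal{A}^{\mathrm{op}}$.

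First I would translate the hypotheses. The $\mathbb{E}$-triangles $X \overset{f_1}{\longrightarrow} Y_1 \overset{g_1}{\longrightarrow} Z_1 \overset{\delta_1}{\dashrightarrow}$ and $X \overset{f_2}{\longrightarrow} Y_2 \overset{g_2}{\longrightarrow} Z_2 \overset{\delta_2}{\dashrightarrow}$ in $\mathcal{A}$ become, in $\mathcal{A}^{\mathrm{op}}$, the $\mathbb{E}^{\mathrm{op}}$-triangles $Z_1 \overset{g_1}{\longrightarrow} Y_1 \overset{f_1}{\longrightarrow} X \overset{\delta_1}{\dashrightarrow}$ and $Z_2 \overset{g_2}{\longrightarrow} Y_2 \overset{f_2}{\longrightarrow} X \overset{\delta_2}{\dashrightarrow}$, which now share the common target $X$. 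These are exactly the two input triangles required by Proposition \ref{prop: pullback 1}, under the relabeling $X_1 = Z_1$, $X_2 = Z_2$, $Z = X$ (and keeping $Y_1, Y_2$).

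Next I would apply Proposition \ref{prop: pullback 1} inside $\mathcal{A}^{\mathrm{op}}$. This yields an object $W$ together with the pullback diagram of that proposition; reading every arrow back in $\mathcal{A}$ reverses it, so the pullback square over $X$ becomes a pushout square under $X$, and the resulting diagram is exactly the one asserted here. It then remains to check that the three identities transform correctly. Since pullback $(-)^*$ in $\mathcal{A}^{\mathrm{op}}$ is pushforward $(-)_*$ in $\mathcal{A}$ and vice versa, the relations $(g_2)^*\delta_1 = \varepsilon_1$ and $(g_1)^*\delta_2 = \varepsilon_2$ of Proposition \ref{prop: pullback 1} become $(f_2)_*\delta_1 = \varepsilon_1$ and $(f_1)_*\delta_2 = \varepsilon_2$, while $(d_1)_*\delta_1 + (d_2)_*\delta_2 = 0$ becomes $(e_1)^*\delta_1 + (e_2)^*\delta_2 = 0$, as required.

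The main obstacle is nothing more than setting up this dictionary carefully: one must confirm that the opposite triple is indeed extriangulated and, above all, keep precise track of the interchange of $f_*$ and $g^*$ (together with the transposition of the diagram) so that each of the three identities lands with the correct variance and the correct sign. Once the translation is fixed, the proof is a transcription of Proposition \ref{prop: pullback 1}. Alternatively, one could give a self-contained direct argument mirroring the proof of Proposition \ref{prop: pullback 1} but invoking $(\mathrm{ET}4)^{\mathrm{op}}$ in place of $(\mathrm{ET}4)$; this avoids passing to $\mathcal{A}^{\mathrm{op}}$ at the cost of repeating the whole pushout construction.
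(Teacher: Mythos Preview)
Your proposal is correct and matches the paper's approach: the paper simply introduces this proposition with the sentence ``We also need the dual of Proposition \ref{prop: pullback 1}'' and gives no further argument, so your careful spelling out of the duality via $(\mathcal{A}^{\mathrm{op}}, \mathbb{E}^{\mathrm{op}}, \mathfrak{s}^{\mathrm{op}})$ is exactly the intended justification.
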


\begin{prop}[{\cite[Proposition 3.17]{NP}}]\label{prop: pullback right comp} \ Let $(\mathcal{A},\mathbb{E},\mathfrak{s})$ be an extriangulated category,
and $X_1\overset{f_1}{\longrightarrow}Y_1\overset{g_1}{\longrightarrow}Z\overset{\delta_1}{\dashrightarrow}$, \ $X_2\overset{d_2}{\longrightarrow}W\overset{e_2}{\longrightarrow}Y_1\overset{\varepsilon_2}{\dashrightarrow}$ and  $X_1\overset{d_1}{\longrightarrow}W\overset{e_1}{\longrightarrow}Y_2\overset{\varepsilon_1}{\dashrightarrow}$ be  $\mathbb{E}$-triangles with $f_1=e_2d_1$. Then there is a commutative diagram of $\mathbb{E}$-triangles
$$ \xymatrix@R=0.6cm{& X_2 \ar@{=}[r] \ar[d]^{d_2} & X_2 \ar@{.>}[d]^{f_2} & \\
X_1 \ar[r]^{d_1} \ar@{=}[d] & W \ar[r]^{e_1} \ar[d]^{e_2} & Y_2 \ar@{-->}[r]^{\varepsilon_1} \ar@{.>}[d]^{g_2} & \\
X_1 \ar[r]^{f_1} & Y_1 \ar[r]^{g_1} \ar@{-->}[d]^{\varepsilon_2} & Z \ar@{-->}[r]^{\delta_1} \ar@{-->}[d]^{\delta_2} & \\& & & \\}$$
such that $(g_1)^*\delta_2=\varepsilon_2$, $(g_2)^*\delta_1=\varepsilon_1$, $(d_1)_*\delta_1+(d_2)_{*}\delta_2=0$.
\end{prop}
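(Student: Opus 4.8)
The plan is to build the third column $X_2\overset{f_2}{\to}Y_2\overset{g_2}{\to}Z$ one morphism at a time, harvesting the two ``easy'' relations as I go, and only at the very end invoking a uniqueness argument to see that this column is an $\mathbb{E}$-triangle realizing the last relation. First I would read $f_2$ and $g_2$ off the commutativity the diagram is required to have. The top-left square forces $f_2=e_1d_2$, so I define $f_2$ this way. To produce $g_2$ I would apply $(\mathrm{ET}3)$ to the pair of $\mathbb{E}$-triangles $X_1\overset{d_1}{\to}W\overset{e_1}{\to}Y_2\overset{\varepsilon_1}{\dashrightarrow}$ and $X_1\overset{f_1}{\to}Y_1\overset{g_1}{\to}Z\overset{\delta_1}{\dashrightarrow}$ along $(\mathrm{id}_{X_1},e_2)$; the needed hypothesis $e_2d_1=f_1$ is exactly the given factorization, so $(\mathrm{ET}3)$ yields $g_2\colon Y_2\to Z$ with $g_2e_1=g_1e_2$ and, from its extension-compatibility clause, the relation $g_2^*\delta_1=\varepsilon_1$. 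Since $e_2d_2=0$, one gets $g_2f_2=g_1e_2d_2=0$, so the third column is at least a complex and the whole diagram commutes.

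Next I would locate a candidate connecting extension $\delta_2\in\mathbb{E}(Z,X_2)$. The contravariant exact sequence of Theorem \ref{lem:homological fundamental} applied to $\mathbb{E}$-triangle A with test object $X_2$ makes $\mathbb{E}(Z,X_2)\overset{g_1^*}{\to}\mathbb{E}(Y_1,X_2)\overset{f_1^*}{\to}\mathbb{E}(X_1,X_2)$ exact, so $\varepsilon_2$ lies in the image of $g_1^*$ as soon as $f_1^*\varepsilon_2=0$. Now $e_2^*\varepsilon_2=0$ by Corollary \ref{cor: factorization} applied to $\mathbb{E}$-triangle B (as $e_2$ factors through itself), whence $f_1^*\varepsilon_2=d_1^*e_2^*\varepsilon_2=0$; thus there is $\delta_2$ with $g_1^*\delta_2=\varepsilon_2$, the second relation. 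The same sequences, together with $(e_2)_*(d_1)_*\delta_1=(f_1)_*\delta_1=0$, already show $(d_1)_*\delta_1+(d_2)_*\delta_2\in\ker(e_2)_*$, a first step toward the third relation.

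It remains to prove that $X_2\overset{f_2}{\to}Y_2\overset{g_2}{\to}Z$ is genuinely an $\mathbb{E}$-triangle realizing $\delta_2$ and that $(d_1)_*\delta_1+(d_2)_*\delta_2=0$. Here my plan is to realize the chosen $\delta_2$ as an $\mathbb{E}$-triangle $X_2\to Y_2^\sharp\to Z\overset{\delta_2}{\dashrightarrow}$ and feed it together with $\mathbb{E}$-triangle A into Proposition \ref{prop: pullback 1}. That proposition manufactures a $3\times3$ diagram of exactly our shape, with an auxiliary object $W^\sharp$, in which \emph{all three} required relations hold automatically; in particular its middle column $X_2\to W^\sharp\to Y_1$ realizes $g_1^*\delta_2=\varepsilon_2$, i.e.\ precisely the extension of the given $\mathbb{E}$-triangle B. Since two $\mathbb{E}$-triangles with the same extension are equivalent, there is an isomorphism $W\cong W^\sharp$ respecting the middle column; propagating it should force $Y_2^\sharp\cong Y_2$ compatibly with $f_2$ and $g_2$, and then transfer both the triangle property and the relation $(d_1)_*\delta_1+(d_2)_*\delta_2=0$ from the model diagram to ours through the two-out-of-three principle of Corollary \ref{cor:iso 2 of 3}.

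The hard part is exactly this last identification. Matching the middle columns pins down the comparison isomorphism $\phi\colon W\to W^\sharp$ only up to the maps $d_2,e_2$, and a short computation using $e_2d_1=f_1=e_2^\sharp d_1^\sharp$ shows that $\phi d_1$ agrees with the model's inflation $d_1^\sharp$ only modulo the image of $d_2^\sharp$, say $\phi d_1=d_1^\sharp+d_2^\sharp\psi$ for some $\psi\colon X_1\to X_2$. This discrepancy is the genuine obstacle: it must be absorbed either by an automorphism of $W^\sharp$ fixing the middle column, or — what I expect to be cleaner — by replacing $\delta_2$ with $\delta_2-\psi_*\delta_1$ inside its coset, which leaves $g_1^*\delta_2=\varepsilon_2$ intact because $g_1^*\delta_1=0$, and checking that the new discrepancy vanishes. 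Controlling this coset-adjustment so that the middle \emph{rows} match as well, and hence the diagram glues, is where the real work lies; everything else is bookkeeping with the exact sequences of Theorem \ref{lem:homological fundamental} and Corollary \ref{cor:iso 2 of 3}.
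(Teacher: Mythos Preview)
The paper does not supply its own proof of this proposition; it is quoted from \cite[Proposition~3.17]{NP} as a preliminary and used later without argument. So there is no in-paper proof to compare against, and your proposal must stand on its own.

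The opening moves are fine: $f_2=e_1d_2$ is forced, $(\mathrm{ET}3)$ gives a $g_2$ with $g_2^*\delta_1=\varepsilon_1$, and the long exact sequence produces some $\delta_2$ with $g_1^*\delta_2=\varepsilon_2$. The gap is exactly where you locate it, and neither of your two repairs closes it. For the automorphism route~(a): an automorphism of $W^\sharp$ fixing $(d_2^\sharp,e_2^\sharp)$ has the form $1+d_2^\sharp\gamma e_2^\sharp$ for some $\gamma\colon Y_1\to X_2$, and sending $d_1^\sharp$ to $\phi d_1=d_1^\sharp+d_2^\sharp\psi$ forces $\gamma f_1=\psi$, i.e.\ $\psi$ must factor through $f_1$, equivalently $\psi_*\delta_1=0$. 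For the coset route~(b): replacing $\delta_2$ by $\delta_2-\psi_*\delta_1$ does preserve $g_1^*\delta_2=\varepsilon_2$, and a one-line computation even shows it forces $(d_1)_*\delta_1+(d_2)_*(\delta_2-\psi_*\delta_1)=0$. But to conclude that the third column is an $\mathbb E$-triangle you must \emph{rebuild} the model diagram from Proposition~\ref{prop: pullback 1} with the new extension, and that construction is not canonical: the fresh discrepancy $\psi'$ is uncontrolled, so there is no reason it vanishes. Routes~(a) and~(b) thus chase each other --- (a) works precisely when (b) is vacuous.

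The structural issue is that you are manufacturing $g_2$ and $\delta_2$ by two unrelated processes ((ET3) and a coset lift) and then hoping they assemble into a realization. The proof in \cite{NP} avoids this by producing the right column \emph{as an $\mathbb E$-triangle from the outset}, so that its connecting extension comes packaged with it and the three relations are read off afterwards; concretely this goes through an (ET4)-type step on the composable deflations $e_2$ and $g_1$, not through an (ET3) filler. If you want to salvage your outline, replace the bare (ET3) call by Proposition~\ref{prop: pullback right complete homotopy square}: applied to the rows $X_1\overset{d_1}{\to}W\overset{e_1}{\to}Y_2$ and $X_1\overset{f_1}{\to}Y_1\overset{g_1}{\to}Z$ along $e_2$, it yields not only $g_2$ but also the $\mathbb E$-triangle $W\to Y_2\oplus Y_1\to Z$ realizing $(d_1)_*\delta_1$, and that extra $\mathbb E$-triangle is what lets you splice in $X_2\overset{d_2}{\to}W$ to extract the right column directly.
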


\subsection{Homotopy cartesian squares}

Similar to the homotopy cartesian squares in triangulated categories (\cite[Definition 1.4.1]{N}), one has

\begin{defn}[{\cite[Definition 3.1]{HJ}}] \  A commutative square in extriangulated category $\mathcal{A}$
$$\xymatrix@R=0.4cm{X \ar[r]^f \ar[d]_a & Y \ar[d]^-b \\ W \ar[r]^g & Z}$$
is {\it homotopy cartesian}, if $X\overset{\left(\begin{smallmatrix} f \\ a \end{smallmatrix}\right) }{\longrightarrow}Y\oplus W\overset{(b, -g)}{\longrightarrow}Z\overset{\delta}{\dashrightarrow}$ is an $\mathbb{E}$-triangle.
\end{defn}

\begin{prop}[{{\cite[3.2]{HJ} or \cite[1.20]{LN}}}] \label{prop: pushout mid complete homotopy square} \ Let $(\mathcal{A},\mathbb{E},\mathfrak{s})$ be an extriangulated category. If the both rows in the  diagram below are $\mathbb{E}$-triangles with $u_*\delta=\delta'$
$$\xymatrix@R=0.4cm{X \ar[r]^{f} \ar[d]_{u} & Y \ar[r]^{g} \ar@{.>}[d]_{v} & Z \ar@{-->}[r]^{\delta} \ar@{=}[d] & \\
X' \ar[r]^{f'} & Y' \ar[r]^{g'} & Z \ar@{-->}[r]^{\delta'} &} $$
then there exists a morphism $v$ such that the diagram commutes, and $X\overset{\left(\begin{smallmatrix} f \\ -u \\\end{smallmatrix}\right)}{\longrightarrow} Y\oplus X'\overset{(v, f')}{\longrightarrow}Y'\overset{(g')^*\delta}{\dashrightarrow}$ is an $\mathbb{E}$-triangle.
\end{prop}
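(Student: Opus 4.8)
The plan is to obtain the desired sequence as the middle row of the pullback-type diagram furnished by Proposition~\ref{prop: pullback 1}, the decisive point being that the hypothesis $u_*\delta=\delta'$ forces the transversal column of that diagram to split.

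First I would apply Proposition~\ref{prop: pullback 1} to the two $\mathbb{E}$-triangles $X\xrightarrow{f}Y\xrightarrow{g}Z\overset{\delta}{\dashrightarrow}$ and $X'\xrightarrow{f'}Y'\xrightarrow{g'}Z\overset{\delta'}{\dashrightarrow}$, which share the third term $Z$. This produces an object $W$, a middle-row $\mathbb{E}$-triangle $X\xrightarrow{d_1}W\xrightarrow{e_1}Y'\overset{(g')^*\delta}{\dashrightarrow}$ and a middle-column $\mathbb{E}$-triangle $X'\xrightarrow{d_2}W\xrightarrow{e_2}Y\overset{g^*\delta'}{\dashrightarrow}$, together with the relations $e_2d_1=f$, $e_1d_2=f'$, $ge_2=g'e_1$ and $(d_1)_*\delta+(d_2)_*\delta'=0$. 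Because the deflation $g$ factors through itself, Corollary~\ref{cor: factorization}(1) gives $g^*\delta=0$; combined with $\delta'=u_*\delta$ and bifunctoriality this yields $g^*\delta'=u_*g^*\delta=0$, so the middle column is a split $\mathbb{E}$-triangle and $d_2$ admits a retraction $\rho_0$.

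The heart of the argument is to replace $\rho_0$ by a retraction that records the prescribed map $u$. Applying $(\rho_0)_*$ to $(d_1)_*\delta+(d_2)_*\delta'=0$ and using $\rho_0d_2=1_{X'}$ gives $(\rho_0d_1)_*\delta=-\delta'=-u_*\delta$, whence $(\rho_0d_1+u)_*\delta=0$; by Corollary~\ref{cor: factorization}(2) there is $t:Y\to X'$ with $\rho_0d_1+u=tf$. Then $\rho:=\rho_0-te_2$ is again a retraction of $d_2$ and now satisfies $\rho d_1=-u$. With this $\rho$ the morphism $\theta:=\binom{e_2}{\rho}:W\to Y\oplus X'$ is an isomorphism — one compares the split column with the standard split $\mathbb{E}$-triangle $X'\xrightarrow{\binom{0}{1}}Y\oplus X'\xrightarrow{(1,0)}Y\overset{0}{\dashrightarrow}$ and invokes Corollary~\ref{cor:iso 2 of 3} — and it satisfies $\theta d_1=\binom{f}{-u}$, $\theta d_2=\binom{0}{1}$ and $(1,0)\theta=e_2$.

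Finally I would put $v:=e_1s$ with $s:=\theta^{-1}\binom{1}{0}$, a section of $e_2$. Since $e_1d_2=f'$, one reads off $e_1=(v,f')\theta$, so $\theta$ is an isomorphism of $\mathbb{E}$-triangles from $X\xrightarrow{d_1}W\xrightarrow{e_1}Y'\overset{(g')^*\delta}{\dashrightarrow}$ onto $X\xrightarrow{\binom{f}{-u}}Y\oplus X'\xrightarrow{(v,f')}Y'\overset{(g')^*\delta}{\dashrightarrow}$; as the former is an $\mathbb{E}$-triangle, so is the latter. Commutativity of the original square is then immediate: $g'v=g'e_1s=ge_2s=g$ (using $ge_2=g'e_1$ and $e_2s=1_Y$), while $sf=d_1+d_2u$ together with $e_1d_1=0$ and $e_1d_2=f'$ gives $vf=e_1sf=f'u$. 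I expect the retraction adjustment of the third paragraph to be the main obstacle: producing a splitting whose off-diagonal component is exactly the prescribed $-u$, rather than some uncontrolled map, is the only place where $u_*\delta=\delta'$ is used essentially, and it is what pins the given $u$ into the inflation $\binom{f}{-u}$.
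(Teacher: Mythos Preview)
The paper does not supply a proof of this proposition; it merely cites \cite[3.2]{HJ} and \cite[1.20]{LN}. So there is nothing to compare against, and your argument must be judged on its own.

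Your proof is correct and self-contained within the paper's toolkit. The application of Proposition~\ref{prop: pullback 1} is set up properly, and the key observation $g^*\delta'=u_*g^*\delta=0$ (hence the transversal column splits) is exactly what makes the construction go through. The retraction adjustment $\rho:=\rho_0-te_2$ is the genuine content: it is the unique step that consumes the hypothesis $u_*\delta=\delta'$, and your derivation of $t$ from Corollary~\ref{cor: factorization}(2) via $(\rho_0d_1+u)_*\delta=0$ is clean. The use of Corollary~\ref{cor:iso 2 of 3} to certify that $\theta=\binom{e_2}{\rho}$ is an isomorphism is legitimate since both the column $X'\to W\to Y$ and the standard split triangle realize the zero extension. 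The final bookkeeping ($e_1\theta^{-1}=(v,f')$ from $\theta^{-1}\binom{0}{1}=d_2$ and $e_1d_2=f'$; commutativity from $sf=d_1+d_2u$, $e_1d_1=0$, and $g'e_1=ge_2$) all checks out. You implicitly use that the composite of inflation and deflation in an $\mathbb{E}$-triangle vanishes (for $e_2d_2=0$ and $e_1d_1=0$); this is standard and follows from Theorem~\ref{lem:homological fundamental}, but it would not hurt to say so once.
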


Dually, one has

\begin{prop}\label{prop: pullback mid complete homotopy square} \ Let $(\mathcal{A},\mathbb{E},\mathfrak{s})$ be an extriangulated category. If the both rows in the diagram below are $\mathbb{E}$-triangles with  $w^*\delta'=\delta$
$$\xymatrix@R=0.4cm{X \ar[r]^{f} \ar@{=}[d] & Y \ar[r]^{g} \ar@{.>}[d]_{v} & Z \ar@{-->}[r]^{\delta} \ar[d]_{w} & \\
X \ar[r]^{f'} & Y' \ar[r]^{g'} & Z' \ar@{-->}[r]^{\delta'} &} $$
then there exists a morphism $v$ such that the diagram commutes, and $Y\overset{\left(\begin{smallmatrix} g \\ v \\\end{smallmatrix}\right)}{\longrightarrow} Z\oplus Y'\overset{(-w, g')}{\longrightarrow}Z'\overset{f_*\delta'}{\dashrightarrow}$ is an $\mathbb{E}$-triangle.
\end{prop}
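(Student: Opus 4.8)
The plan is to obtain this statement from Proposition \ref{prop: pushout mid complete homotopy square} by passing to the opposite extriangulated category. Recall that if $(\mathcal{A},\mathbb{E},\mathfrak{s})$ is extriangulated then so is $(\mathcal{A}^{\mathrm{op}},\mathbb{E}^{\mathrm{op}},\mathfrak{s}^{\mathrm{op}})$, where $\mathbb{E}^{\mathrm{op}}(X,Y)=\mathbb{E}(Y,X)$; under this duality a sequence $X\overset{f}{\longrightarrow}Y\overset{g}{\longrightarrow}Z\overset{\delta}{\dashrightarrow}$ is an $\mathbb{E}$-triangle in $\mathcal{A}$ if and only if $Z\overset{g}{\longrightarrow}Y\overset{f}{\longrightarrow}X\overset{\delta}{\dashrightarrow}$ is an $\mathbb{E}^{\mathrm{op}}$-triangle in $\mathcal{A}^{\mathrm{op}}$, and the two functorial operations are interchanged: $f_*$ in $\mathcal{A}$ becomes $f^*$ in $\mathcal{A}^{\mathrm{op}}$, while $g^*$ in $\mathcal{A}$ becomes $g_*$ in $\mathcal{A}^{\mathrm{op}}$.

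First I would rewrite the two given $\mathbb{E}$-triangles as the $\mathbb{E}^{\mathrm{op}}$-triangles $Z\overset{g}{\longrightarrow}Y\overset{f}{\longrightarrow}X\overset{\delta}{\dashrightarrow}$ and $Z'\overset{g'}{\longrightarrow}Y'\overset{f'}{\longrightarrow}X\overset{\delta'}{\dashrightarrow}$ in $\mathcal{A}^{\mathrm{op}}$. These now share the common right-hand term $X$, and the hypothesis $w^*\delta'=\delta$ reads, in $\mathcal{A}^{\mathrm{op}}$, as $w_*\delta'=\delta$ for the morphism $w\colon Z'\longrightarrow Z$. This is exactly the input configuration of Proposition \ref{prop: pushout mid complete homotopy square} applied in $\mathcal{A}^{\mathrm{op}}$, with the roles of the two triangles assigned so that $\delta'$ pushes forward to $\delta$ along $w$. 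Applying that proposition produces a morphism $v\colon Y\longrightarrow Y'$ (i.e. a morphism $Y'\to Y$ in $\mathcal{A}^{\mathrm{op}}$) with $vf=f'$ and $g'v=wg$, together with an $\mathbb{E}^{\mathrm{op}}$-triangle
\[
Z'\overset{\left(\begin{smallmatrix} g' \\ -w \end{smallmatrix}\right)}{\longrightarrow}Y'\oplus Z\overset{(v,\,g)}{\longrightarrow}Y\overset{f^*\delta'}{\dashrightarrow}.
\]
Translating back to $\mathcal{A}$, reversing the arrows turns this into the $\mathbb{E}$-triangle $Y\overset{\left(\begin{smallmatrix} v \\ g\end{smallmatrix}\right)}{\longrightarrow}Y'\oplus Z\overset{(g',\,-w)}{\longrightarrow}Z'\dashrightarrow$, the incoming and outgoing matrices transposing as maps into/out of the biproduct, and $f^*\delta'$ in $\mathcal{A}^{\mathrm{op}}$ becoming $f_*\delta'$ in $\mathcal{A}$. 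Finally, interchanging the two summands $Y'\oplus Z\cong Z\oplus Y'$, which is an isomorphism of $\mathbb{E}$-triangles fixing $Y$ and $Z'$ and hence preserving the extension, yields precisely $Y\overset{\left(\begin{smallmatrix} g \\ v\end{smallmatrix}\right)}{\longrightarrow}Z\oplus Y'\overset{(-w,\,g')}{\longrightarrow}Z'\overset{f_*\delta'}{\dashrightarrow}$, as claimed.

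The only real work here is the dualization bookkeeping: keeping track of which of $_*$ and $^*$ one lands on, transposing the matrix morphisms correctly when a map into a biproduct becomes a map out of one, and checking that the signs and the order of the summands match the asserted conclusion. A pleasant feature is that after reordering the direct sum the signs come out on the nose, with $-w$ staying paired with $w$ and $g'$ staying positive, so no extra sign correction is needed. If one prefers to avoid the opposite category, the same argument can be carried out directly by imitating the proof of Proposition \ref{prop: pushout mid complete homotopy square}, using $(\mathrm{ET}4)^{\mathrm{op}}$ and Proposition \ref{prop: pullback 1} in place of their duals; but the duality route is shorter and less error-prone.
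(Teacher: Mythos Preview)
Your proposal is correct and matches the paper's approach: the paper introduces this proposition with the phrase ``Dually, one has'' immediately after Proposition~\ref{prop: pushout mid complete homotopy square}, giving no further proof. You have simply spelled out that duality explicitly and checked the bookkeeping (the swap of $_*$ and $^*$, the transposition of the matrix morphisms, and the reordering of the summands), all of which is sound.
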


\begin{cor}[{\cite[3.16]{NP}}]\label{cor: inflation and deflation matrix} \ Let $(\mathcal{A},\mathbb{E},\mathfrak{s})$ be an extriangulated category. Then

\vskip5pt

 $(1)$ \ For an $\mathbb{E}$-inflation $f:X \longrightarrow Y$ and a morphism $a:X \longrightarrow X'$,  $\left(\begin{smallmatrix} f\\a \end{smallmatrix}\right)$ is an $\mathbb{E}$-inflation.

\vskip5pt

 $(2)$ \ For an $\mathbb{E}$-deflation $g:Y \longrightarrow Z$ and a morphism $b:Z' \longrightarrow Z$,  $(g, b)$ is an $\mathbb{E}$-deflation.
\end{cor}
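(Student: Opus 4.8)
The plan is to deduce both statements directly from the two homotopy-cartesian-square results, using Proposition \ref{prop: pushout mid complete homotopy square} for $(1)$ and its dual Proposition \ref{prop: pullback mid complete homotopy square} for $(2)$. The one auxiliary fact I would invoke is that an $\mathbb{E}$-deflation pre-composed with an isomorphism is again an $\mathbb{E}$-deflation (and dually an $\mathbb{E}$-inflation post-composed with an isomorphism is an $\mathbb{E}$-inflation), which is immediate since $\mathfrak{s}$ assigns to each $\mathbb{E}$-extension a whole equivalence class of sequences.

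For $(1)$, I would first use that $f$ is an $\mathbb{E}$-inflation to fix an $\mathbb{E}$-triangle $X \overset{f}{\longrightarrow} Y \overset{g}{\longrightarrow} Z \overset{\delta}{\dashrightarrow}$ with $\delta \in \mathbb{E}(Z,X)$. I then set $u := -a : X \longrightarrow X'$ and $\delta' := u_*\delta = (-a)_*\delta \in \mathbb{E}(Z,X')$, and choose any realization $X' \overset{f'}{\longrightarrow} Y' \overset{g'}{\longrightarrow} Z \overset{\delta'}{\dashrightarrow}$, which exists by $(\mathrm{ET}2)$. Since $u_*\delta = \delta'$ by construction, Proposition \ref{prop: pushout mid complete homotopy square} applies and yields a morphism $v$ together with an $\mathbb{E}$-triangle $X \overset{\left(\begin{smallmatrix} f \\ -u \end{smallmatrix}\right)}{\longrightarrow} Y\oplus X' \overset{(v,\, f')}{\longrightarrow} Y' \dashrightarrow$. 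As $-u = a$, the inflation occurring here is precisely $\left(\begin{smallmatrix} f \\ a \end{smallmatrix}\right)$, which proves $(1)$.

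For $(2)$ I would argue dually. Fix an $\mathbb{E}$-triangle $X \overset{f}{\longrightarrow} Y \overset{g}{\longrightarrow} Z \overset{\delta}{\dashrightarrow}$ realizing the deflation $g$, set $w := -b : Z' \longrightarrow Z$ and $\sigma := w^*\delta = (-b)^*\delta \in \mathbb{E}(Z',X)$, and choose a realization $X \overset{p}{\longrightarrow} B \overset{q}{\longrightarrow} Z' \overset{\sigma}{\dashrightarrow}$. Applying Proposition \ref{prop: pullback mid complete homotopy square} to this triangle as the top row and to $X \overset{f}{\longrightarrow} Y \overset{g}{\longrightarrow} Z \overset{\delta}{\dashrightarrow}$ as the bottom row, with connecting morphism $w$ (so that $w^*\delta = \sigma$), I obtain an $\mathbb{E}$-triangle $B \overset{\left(\begin{smallmatrix} q \\ v \end{smallmatrix}\right)}{\longrightarrow} Z'\oplus Y \overset{(-w,\, g)}{\longrightarrow} Z \dashrightarrow$. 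Since $-w = b$, this shows $(b,\, g) : Z'\oplus Y \longrightarrow Z$ is an $\mathbb{E}$-deflation; pre-composing with the swap isomorphism $Y\oplus Z' \overset{\sim}{\longrightarrow} Z'\oplus Y$ gives $(g,\, b)$, which is therefore an $\mathbb{E}$-deflation as well.

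The whole argument is bookkeeping once the right proposition is invoked, so I do not expect a genuine obstacle; the only point demanding care is to match the rigid matrix shapes $\left(\begin{smallmatrix} f \\ -u \end{smallmatrix}\right)$ and $(-w,\, g')$ prescribed by the two propositions against the target forms $\left(\begin{smallmatrix} f \\ a \end{smallmatrix}\right)$ and $(g,\, b)$. This is exactly what forces the sign choices $u = -a$ and $w = -b$, and, in $(2)$, the harmless reordering of summands via the swap isomorphism.
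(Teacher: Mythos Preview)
Your proposal is correct and matches the paper's intended derivation: the corollary is stated immediately after Propositions \ref{prop: pushout mid complete homotopy square} and \ref{prop: pullback mid complete homotopy square} with no separate proof, precisely because it drops out of them by the bookkeeping you describe. Your sign choices $u=-a$, $w=-b$ and the swap isomorphism in part $(2)$ are exactly the minor adjustments needed to match the matrix shapes in those propositions.
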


As the dual of \cite[Theorem 3.3]{KLW} one has

\begin{prop}\label{prop: pullback right complete homotopy square} \ Let $(\mathcal{A},\mathbb{E},\mathfrak{s})$ be an extriangulated category. If the both rows in the diagram below are $\mathbb{E}$-triangles with $vf=f'$
$$\xymatrix@R=0.4cm{X \ar[r]^{f} \ar@{=}[d] & Y \ar[r]^{g} \ar[d]_{v} & Z \ar@{-->}[r]^{\delta} \ar@{.>}[d]_{w} & \\
X \ar[r]^{f'} & Y' \ar[r]^{g'} & Z' \ar@{-->}[r]^{\delta'} &} $$
then there exists a morphism $w$ such that the diagram commutes, and $w^*\delta'=\delta$, and moreover $Y\overset{\left(\begin{smallmatrix} g \\ v \\ \end{smallmatrix}\right)}{\longrightarrow} Z\oplus Y'\overset{(-w, g') }{\longrightarrow}Z'\overset{f_*\delta'}{\dashrightarrow}$ is an  $\mathbb{E}$-triangle.
\end{prop}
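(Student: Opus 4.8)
The plan is to split the statement into an easy part---producing a $w$ with $wg=g'v$ and $w^*\delta'=\delta$---and the substantive part, the $\mathbb{E}$-triangle, which I would extract from Proposition \ref{prop: pullback mid complete homotopy square} after correcting for the fact that that proposition only supplies \emph{some} completion of the square, not the prescribed $v$.

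First I would apply $(\mathrm{ET}3)$ to the given square with left vertical map $\mathrm{id}_X$. Since $vf=f'=f'\,\mathrm{id}_X$, there is a morphism $w_0\colon Z\to Z'$ with $w_0g=g'v$ and $\delta=(\mathrm{id}_X)_*\delta=w_0^*\delta'$. As $w_0^*\delta'=\delta$, feeding $w_0$ into Proposition \ref{prop: pullback mid complete homotopy square} yields a morphism $\bar v$ with $\bar vf=f'$ and $g'\bar v=w_0g$, together with an $\mathbb{E}$-triangle $Y\overset{\left(\begin{smallmatrix} g\\ \bar v\end{smallmatrix}\right)}{\longrightarrow}Z\oplus Y'\overset{(-w_0,\,g')}{\longrightarrow}Z'\overset{f_*\delta'}{\dashrightarrow}$. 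The catch is that $\bar v$ need not be the prescribed $v$.

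To repair this, set $d:=v-\bar v$. Then $df=vf-\bar vf=0$ and $g'd=g'v-g'\bar v=w_0g-w_0g=0$. Because $df=0$, the second exact sequence of Theorem \ref{lem:homological fundamental}, applied to the first $\mathbb{E}$-triangle with test object $Y'$, shows that $d$ factors as $d=cg$ for some $c\colon Z\to Y'$. I would then transport the $\mathbb{E}$-triangle above along the automorphism $\Phi=\left(\begin{smallmatrix}1&0\\ c&1\end{smallmatrix}\right)$ of $Z\oplus Y'$: since the class $\mathfrak{s}(f_*\delta')$ is closed under isomorphisms of the middle term, $Y\overset{\Phi\left(\begin{smallmatrix} g\\ \bar v\end{smallmatrix}\right)}{\longrightarrow}Z\oplus Y'\overset{(-w_0,\,g')\Phi^{-1}}{\longrightarrow}Z'\overset{f_*\delta'}{\dashrightarrow}$ is again an $\mathbb{E}$-triangle. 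A direct check gives $\Phi\left(\begin{smallmatrix} g\\ \bar v\end{smallmatrix}\right)=\left(\begin{smallmatrix} g\\ cg+\bar v\end{smallmatrix}\right)=\left(\begin{smallmatrix} g\\ v\end{smallmatrix}\right)$ and $(-w_0,\,g')\Phi^{-1}=(-(w_0+g'c),\,g')$, so setting $w:=w_0+g'c$ produces exactly the sequence $Y\overset{\left(\begin{smallmatrix} g\\ v\end{smallmatrix}\right)}{\longrightarrow}Z\oplus Y'\overset{(-w,\,g')}{\longrightarrow}Z'\overset{f_*\delta'}{\dashrightarrow}$ demanded by the statement.

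It then remains only to verify that this new $w$ still satisfies the two required identities. Using $cg=d$ and $g'd=0$ one gets $wg=w_0g+g'cg=w_0g+g'd=w_0g=g'v$, so the square commutes; and since $g'$ factors through itself, Corollary \ref{cor: factorization}(1) gives $(g')^*\delta'=0$, whence $w^*\delta'=w_0^*\delta'+c^*(g')^*\delta'=\delta$. I expect the main obstacle to be precisely the gap between the prescribed $v$ and the completion $\bar v$ delivered by Proposition \ref{prop: pullback mid complete homotopy square}; the idea that resolves it is that the discrepancy $d$ is simultaneously annihilated by $f$ on the right (so it factors through $g$) and by $g'$ on the left (so the resulting change $w_0\rightsquigarrow w_0+g'c$ is invisible to $wg=g'v$ and to $w^*\delta'=\delta$), which is exactly what lets it be absorbed into a shear automorphism of $Z\oplus Y'$ fixing the extension $f_*\delta'$.
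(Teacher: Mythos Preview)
Your argument is correct. The paper, however, does not supply a proof of this proposition at all: it simply records it as the dual of \cite[Theorem~3.3]{KLW} and moves on. So there is nothing to compare on the level of proof strategy.

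That said, your route is a perfectly good self-contained derivation from the ingredients already available in the paper. You reduce the statement to Proposition~\ref{prop: pullback mid complete homotopy square} (which only guarantees \emph{some} middle map $\bar v$), and then absorb the discrepancy $d=v-\bar v$ into a shear automorphism of $Z\oplus Y'$. The two facts $df=0$ and $g'd=0$ are exactly what make this work: the first lets you write $d=cg$, and the second ensures the modified $w=w_0+g'c$ still satisfies $wg=g'v$. The check $w^*\delta'=\delta$ via $(g')^*\delta'=0$ is fine. One cosmetic remark: rather than invoking Corollary~\ref{cor: factorization}(1) for $(g')^*\delta'=0$, you could equally note that $(g')^*\delta'=0$ is immediate from the long exact sequence in Theorem~\ref{lem:homological fundamental}, since $g'$ lies in the image of $\mathcal{A}(Y',g')$.
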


\begin{lem}\label{lem: extri iso} \ Let $(\mathcal{A},\mathbb{E},\mathfrak{s})$ be an extriangulated category.
Suppose that $X\overset{f}{\longrightarrow}Y\overset{g}{\longrightarrow}Z\overset{\delta}{\dashrightarrow}$ is an $\mathbb{E}$-triangle,
and that $\varphi:X\longrightarrow X'$ and $\psi:Z'\longrightarrow Z$ are isomorphisms in $\mathcal{A}$. Then \ $X'\overset{f\varphi^{-1}}{\longrightarrow}Y\overset{g}{\longrightarrow}Z\overset{\varphi_*\delta}{\dashrightarrow}$ and \ $X\overset{f}{\longrightarrow}Y\overset{\psi^{-1}g}{\longrightarrow}Z'\overset{\psi^*\delta}{\dashrightarrow}$ are  $\mathbb{E}$-triangles.
\end{lem}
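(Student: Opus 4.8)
The plan is to verify directly that the two proposed sequences satisfy the defining property of an $\mathbb{E}$-triangle, namely that each is a realization of the indicated $\mathbb{E}$-extension in the sense of Definition 2.9(1). I will treat the two claims separately, since they are dual to each other; it suffices to prove the first, $X'\overset{f\varphi^{-1}}{\longrightarrow}Y\overset{g}{\longrightarrow}Z\overset{\varphi_*\delta}{\dashrightarrow}$, and the second will follow by the same argument with roles of source and target exchanged.

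\textbf{First claim.} I would start by observing that the original $\mathbb{E}$-triangle gives the commutative square
\[\xymatrix@R=0.5cm{X \ar[r]^{f} \ar[d]_{\varphi} & Y \ar[r]^{g} \ar@{=}[d] & Z \ar@{=}[d] \\ X' \ar[r]^{f\varphi^{-1}} & Y \ar[r]^{g} & Z}\]
in which the left vertical map is the isomorphism $\varphi$ and the other verticals are identities. The key compatibility to record is that $\varphi_*\delta = (\operatorname{Id}_Z)^*(\varphi_*\delta)$, which is exactly the condition $u_*\delta = w^*\delta'$ of Definition 2.9(1) with $u=\varphi$, $w=\operatorname{Id}_Z$, $\delta' = \varphi_*\delta$. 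Thus I need only check that $X'\overset{f\varphi^{-1}}{\longrightarrow}Y\overset{g}{\longrightarrow}Z$ genuinely realizes $\varphi_*\delta$. To see this, I would invoke that $\mathfrak{s}$ assigns to $\varphi_*\delta\in\mathbb{E}(Z,X')$ some equivalence class $[X'\overset{f'}{\longrightarrow}Y'\overset{g'}{\longrightarrow}Z]$, and then use the realization axiom applied to the pair $(\operatorname{Id}_{X'}, \operatorname{Id}_Z)$ together with Corollary 3.6 (stated here as Corollary \ref{cor:iso 2 of 3}) to force the middle map to be an isomorphism, yielding the equivalence $[X'\overset{f\varphi^{-1}}{\longrightarrow}Y\overset{g}{\longrightarrow}Z] = \mathfrak{s}(\varphi_*\delta)$.

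Concretely, applying Definition 2.9(1) to the original triangle (realizing $\delta$) and the chosen realization $X'\overset{f'}{\longrightarrow}Y'\overset{g'}{\longrightarrow}Z$ of $\varphi_*\delta$, with $u=\varphi$ and $w=\operatorname{Id}_Z$ (legitimate since $\varphi_*\delta = (\operatorname{Id}_Z)^*(\varphi_*\delta)$), produces a morphism $v\colon Y\to Y'$ making the diagram commute. This $v$ fits into a commutative ladder of $\mathbb{E}$-triangles whose outer verticals $\varphi$ and $\operatorname{Id}_Z$ are isomorphisms, so by Corollary \ref{cor:iso 2 of 3} the map $v$ is an isomorphism. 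The relation $v(f\varphi^{-1}) = f'$ (from $vf = f'\varphi$) and $g = g'v$ then exhibit $X'\overset{f\varphi^{-1}}{\longrightarrow}Y\overset{g}{\longrightarrow}Z$ as equivalent to $X'\overset{f'}{\longrightarrow}Y'\overset{g'}{\longrightarrow}Z$, hence as a realization of $\varphi_*\delta$; this is precisely the assertion that $X'\overset{f\varphi^{-1}}{\longrightarrow}Y\overset{g}{\longrightarrow}Z\overset{\varphi_*\delta}{\dashrightarrow}$ is an $\mathbb{E}$-triangle.

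\textbf{Second claim and main obstacle.} For the second statement I would run the dual argument: the square with right vertical $\psi$ an isomorphism and left vertical $\operatorname{Id}_X$, using $(\operatorname{Id}_X)_*\delta = \psi^*(\psi^{*-1}\cdots)$—more precisely, noting $\delta = \operatorname{Id}_*\delta$ and $\psi^*\delta$ realize compatibly via $w=\psi$—then again deducing that the induced middle map is an isomorphism by Corollary \ref{cor:iso 2 of 3}. I anticipate that the genuinely delicate point is not the isomorphism-chasing but the clean bookkeeping of which extension lives where: one must be careful that $\psi^*\delta\in\mathbb{E}(Z',X)$ and that the compatibility condition reads $(\operatorname{Id}_X)_*\delta = \psi^*(\delta)$ relative to the realization $\mathfrak{s}(\psi^*\delta)$, rather than introducing an inverse on the wrong side. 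Once the correct pairs $(u,w)$ are identified, both claims reduce to the same two ingredients—the realization axiom Definition 2.9(1) and the two-out-of-three principle of Corollary \ref{cor:iso 2 of 3}—so the proof is short; the only real care needed is sign- and side-conventions in the functoriality $\varphi_*$ and $\psi^*$.
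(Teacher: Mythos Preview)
Your proposal is correct and follows essentially the same approach as the paper's proof: pick a realization $[X'\overset{f'}{\longrightarrow}Y'\overset{g'}{\longrightarrow}Z]$ of $\varphi_*\delta$, use the realization axiom (ET2) with $(u,w)=(\varphi,\operatorname{Id}_Z)$ to obtain a comparison map on the middle term, invoke Corollary \ref{cor:iso 2 of 3} to see it is an isomorphism, and conclude that $X'\overset{f\varphi^{-1}}{\longrightarrow}Y\overset{g}{\longrightarrow}Z$ lies in the equivalence class $\mathfrak{s}(\varphi_*\delta)$. The paper does exactly this (with $\alpha$ in place of your $v$), and then states that the second claim is similar.
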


\begin{proof} \ Assume that $\mathfrak{s}(\varphi_*\delta)=[X'\overset{x}{\longrightarrow}D\overset{y}{\longrightarrow}Z]$. By (ET2) there exists $\alpha:Y\longrightarrow D$ such that the following diagram of $\mathbb{E}$-triangles commutes:
$$\xymatrix@R=0.4cm{X \ar[r]^{f} \ar[d]_{\varphi} & Y \ar[r]^{g} \ar@{.>}[d]_{\alpha} & Z \ar@{-->}[r]^{\delta} \ar@{=}[d] & \\
X' \ar[r]^{x} & D \ar[r]^{y} & Z \ar@{-->}[r]^{\varphi_*\delta} &.}$$
By Corollary \ref{cor:iso 2 of 3},  $\alpha$ is an isomorphism. It follows from the commutative diagram
$$\xymatrix@R=0.4cm{X' \ar[r]^{f\varphi^{-1}} \ar@{=}[d] & Y \ar[r]^{g} \ar@{.>}[d]_{\alpha} & Z \ar@{=}[d] \\
X' \ar[r]^{x} & D \ar[r]^{y} & Z} $$ that
$\mathfrak{s}(\varphi_*\delta)=[X'\overset{f\varphi^{-1}}{\longrightarrow}Y\overset{g}{\longrightarrow}Z]$.  Similarly, 
$X\overset{f}{\longrightarrow}Y\overset{\psi^{-1}g}{\longrightarrow}Z'\overset{\psi^*\delta}{\dashrightarrow}$ is an  $\mathbb{E}$-triangle.
\end{proof}

\begin{prop}\label{prop: ET4 version homotopy}\  Let $(\mathcal{A},\mathbb{E},\mathfrak{s})$ be an extriangulated category,  and $X\overset{f}{\longrightarrow}Y\overset{f'}{\longrightarrow}Z'\overset{\delta}{\dashrightarrow}$, \ $X\overset{h}{\longrightarrow}Z\overset{h'}{\longrightarrow}Y'\overset{\zeta}{\dashrightarrow}$ and $Y\overset{g}{\longrightarrow}Z\overset{g'}{\longrightarrow}X'\overset{\varepsilon}{\dashrightarrow}$ be $\mathbb{E}$-triangles with $h=gf$. Then there is a commutative diagram of $\mathbb{E}$-triangles
$$ \xymatrix@R=0.6cm{
X \ar[r]^{f} \ar@{=}[d] & Y \ar[r]^{f'} \ar[d]_{g} & Z' \ar@{-->}[r]^{\delta} \ar@{.>}[d]^{d} & \\
X \ar[r]^{h} & Z \ar[r]^{h'} \ar[d]_{g'} & Y' \ar@{.>}[d]^{e} \ar@{-->}[r]^{\zeta} & \\
& X' \ar@{=}[r] \ar@{-->}[d]_{\varepsilon} & X' \ar@{-->}[d]^{\eta} & \\
& & & }$$
such that $d^*\zeta=\delta$, $f'_*\varepsilon=\eta$, $f_*\zeta=e^*\varepsilon$. Moreover $Y\overset{\left(\begin{smallmatrix} f' \\ g \\\end{smallmatrix}\right)}{\longrightarrow} Z'\oplus Z\overset{(-d,  h')}{\longrightarrow}Y'\overset{f_*\zeta}{\dashrightarrow}$ is an $\mathbb{E}$-triangle.
\end{prop}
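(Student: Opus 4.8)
The plan is to regard this as the octahedral axiom $(\mathrm{ET}4)$ refined by a homotopy-cartesian conclusion, and to assemble the diagram through the homotopy-cartesian lemmas of Section~2 rather than invoking $(\mathrm{ET}4)$ as a black box, so that the ``moreover'' part emerges for the very morphism $d$ occurring in the diagram. First I would feed the top row $X\overset{f}{\to}Y\overset{f'}{\to}Z'\overset{\delta}{\dashrightarrow}$ and the middle row $X\overset{h}{\to}Z\overset{h'}{\to}Y'\overset{\zeta}{\dashrightarrow}$ into Proposition~\ref{prop: pullback right complete homotopy square}, taking the map ``$v$'' there to be $g\colon Y\to Z$; this is legitimate precisely because $gf=h$. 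The proposition then produces a morphism $d\colon Z'\to Y'$ with $h'g=df'$ and $d^{*}\zeta=\delta$, and it asserts exactly that $Y\overset{\left(\begin{smallmatrix} f'\\ g\end{smallmatrix}\right)}{\to}Z'\oplus Z\overset{(-d,\,h')}{\to}Y'\overset{f_{*}\zeta}{\dashrightarrow}$ is an $\mathbb{E}$-triangle. Thus, in one stroke, this supplies the upper-left commuting square, the relation $d^{*}\zeta=\delta$, and the entire ``moreover'' assertion, with $d$ the genuine connecting morphism of the diagram.

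Next I would produce the right-hand column together with the morphism $e$ by pushing the third triangle $Y\overset{g}{\to}Z\overset{g'}{\to}X'\overset{\varepsilon}{\dashrightarrow}$ out along $f'\colon Y\to Z'$. Fixing any realization $Z'\to N\to X'\overset{f'_{*}\varepsilon}{\dashrightarrow}$ of $f'_{*}\varepsilon$ and applying Proposition~\ref{prop: pushout mid complete homotopy square} yields a second homotopy-cartesian $\mathbb{E}$-triangle built on the same cospan $Z\overset{g}{\leftarrow}Y\overset{f'}{\to}Z'$ as the triangle from the first step. Comparing the two through Corollary~\ref{cor:iso 2 of 3} identifies $N$ with $Y'$ by an isomorphism compatible with $h'$ and $d$, and transports the realization into the form $Z'\overset{d}{\to}Y'\overset{e}{\to}X'\overset{f'_{*}\varepsilon}{\dashrightarrow}$; this both defines $e$ and delivers $eh'=g'$, filling the lower-right square. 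Consistency is visible a priori, since $g'h=g'gf=0$, so by the second exact sequence of Theorem~\ref{lem:homological fundamental} for the middle triangle the morphism $g'$ does factor through $h'$. Setting $\eta:=f'_{*}\varepsilon$, the remaining identity $f_{*}\zeta=e^{*}\varepsilon$ is then read off by bifunctoriality of $\mathbb{E}$ from the compatibilities accumulated above, carried through the same comparison isomorphism.

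The hard part will be this second step, namely reconciling the independently-constructed pushout of $\varepsilon$ with the morphism $d$ already fixed in the first step. The delicate points are twofold: the reordering $Z'\oplus Z$ versus $Z\oplus Z'$ together with the signs in the deflations $(-d,h')$ and $(\ast,\ast)$ forces careful sign bookkeeping before Corollary~\ref{cor:iso 2 of 3} applies, where one may normalize signs using Lemma~\ref{lem: extri iso}; and one must verify that the comparison isomorphism is compatible not merely with the two inflations but with all of $h'$, $d$ and the chosen deflation onto $X'$, so that the transported realization genuinely has $d$ as its first map and furnishes a single $e$ satisfying simultaneously $eh'=g'$ and the realization of $f'_{*}\varepsilon$. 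Once this matching is pinned down, commutativity of the whole $(\mathrm{ET}4)$-diagram and all three extension identities follow formally.
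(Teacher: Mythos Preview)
Your first step coincides with the paper's: feed the first two rows into Proposition~\ref{prop: pullback right complete homotopy square} using $g$ as the middle map, which produces $d$ with $d^*\zeta=\delta$ and simultaneously the homotopy-cartesian $\mathbb{E}$-triangle $Y\overset{\left(\begin{smallmatrix} f'\\ g\end{smallmatrix}\right)}{\longrightarrow}Z'\oplus Z\overset{(-d,\,h')}{\longrightarrow}Y'\overset{f_*\zeta}{\dashrightarrow}$.

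For the second step your route diverges. You build a \emph{second} homotopy-cartesian square on the same span $Z'\overset{f'}{\longleftarrow}Y\overset{g}{\longrightarrow}Z$ by pushing $\varepsilon$ out along $f'$ via Proposition~\ref{prop: pushout mid complete homotopy square}, and then match the two squares through $(\mathrm{ET}3)$ plus Corollary~\ref{cor:iso 2 of 3}; after the swap-and-sign isomorphism $\left(\begin{smallmatrix}0&1\\-1&0\end{smallmatrix}\right)$ the inflations agree, the resulting isomorphism $\phi\colon Y'\to N$ satisfies $\phi h'=v$ and $\phi d=a$, and $e:=b\phi$ delivers $eh'=g'$, the realization $Z'\overset{d}{\to}Y'\overset{e}{\to}X'\overset{f'_*\varepsilon}{\dashrightarrow}$, and $e^*\varepsilon=f_*\zeta$ from the extension compatibility in $(\mathrm{ET}3)$. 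This is correct, and you rightly identify the sign bookkeeping and the need for compatibility with \emph{both} $h'$ and $d$ as the delicate points. The paper instead applies Proposition~\ref{prop: pullback right comp} directly to the triple consisting of $Y\overset{g}{\to}Z\overset{g'}{\to}X'$, the split triangle on $Z'\oplus Z$, and the homotopy-cartesian triangle from step one; since $(0,1)\left(\begin{smallmatrix}f'\\ g\end{smallmatrix}\right)=g$, that proposition outputs the right-hand column and $e$ in one stroke, with the induced map $Z'\to Y'$ forced to equal $(-d,h')\left(\begin{smallmatrix}1\\0\end{smallmatrix}\right)=-d$, after which a single sign flip via Lemma~\ref{lem: extri iso} finishes. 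In effect Proposition~\ref{prop: pullback right comp} packages exactly the comparison you carry out by hand, so the paper's argument is shorter; your version has the merit of making both squares appear as genuine homotopy pushouts and of isolating the comparison step explicitly.
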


\begin{proof}\ By Proposition \ref{prop: pullback right complete homotopy square} there exists a morphism $d:Z'\longrightarrow Y'$ such that $df'=h'g$, $d^*\zeta=\delta$, and $Y\overset{\bigl(\begin{smallmatrix}
	f' \\ g \\
\end{smallmatrix}\bigr)}{\longrightarrow} Z'\oplus Z\overset{(-d, h')}{\longrightarrow}Y'\overset{f_*\zeta}{\dashrightarrow}$ is an $\mathbb{E}$-triangle. By Proposition \ref{prop: pullback right comp} one has the following commutative diagram of $\mathbb{E}$-triangles
$$\xymatrix@R=0.25cm@C=0.4cm{&& Z' \ar@{=}[rr] \ar[dd]_-{\left(\begin{smallmatrix}1 \\ 0\end{smallmatrix}\right)} && Z' \ar@{.>}[dd]^-{\tilde{d}} && \\ \\
Y \ar[rr]^-{\left(\begin{smallmatrix}f' \\ g\end{smallmatrix}\right)} \ar@{=}[dd] && Z'\oplus Z \ar[rr]^-{(-d, h')} \ar[dd]_{(0, 1)} && Y' \ar@{-->}[rr]^{f_*\zeta} \ar@{.>}[dd]^{e} && \\ \\
Y \ar[rr]^-{g} && Z \ar[rr]^-{g'} \ar@{-->}[dd]_{0} && X' \ar@{-->}[rr]^{\varepsilon} \ar@{-->}[dd]^{\tilde{\eta}} \ar@{-->}[dd]&& \\ \\
&&  && &&}$$
such that $f_*\zeta=e^*\varepsilon$, $(g')^*\tilde{\eta}=0$, $\bigl(\begin{smallmatrix} 1\\0\\ \end{smallmatrix}\bigr)_*\tilde{\eta}+\bigl(\begin{smallmatrix} f'\\g\\ \end{smallmatrix}\bigr)_*\varepsilon=0$. Thus $$(1,0)_*\bigl(\begin{smallmatrix} 1\\0\\ \end{smallmatrix}\bigr)_*\tilde{\eta}+(1,0)_*\bigl(\begin{smallmatrix} f'\\g\\ \end{smallmatrix}\bigr)_*\varepsilon=\tilde{\eta}+f'_*\varepsilon=0.$$  Put $\eta=-\tilde{\eta}$. Then $f'_*\varepsilon=\eta$. By Lemma \ref{lem: extri iso}, $Z'\overset{d}{\longrightarrow}Y'\overset{e}{\longrightarrow}X'\overset{\eta}{\dashrightarrow}$ is an $\mathbb{E}$-triangle. Then $eh'=g'$. This completes the poof.
\end{proof}

The following is a stronger version of Proposition \ref{prop: pushout version 1}.

\begin{prop}\label{prop: pushout homotopy}\ Let $(\mathcal{A},\mathbb{E},\mathfrak{s})$ be an extriangulated category, and $X\overset{f_1}{\longrightarrow}Y_1\overset{g_1}{\longrightarrow}Z_1\overset{\delta_1}{\dashrightarrow}$, $X\overset{f_2}{\longrightarrow}Y_2\overset{g_2}{\longrightarrow}Z_2\overset{\delta_2}{\dashrightarrow}$ be $\mathbb{E}$-triangles. Then there is a commutative commutative diagram of $\mathbb{E}$-triangles
$$\xymatrix@R=0.6cm{X \ar[r]^{f_1} \ar[d]_{f_2} & Y_1 \ar[r]^{g_1} \ar@{.>}[d]^{d_2} & Z_1 \ar@{-->}[r]^{\delta_1} \ar@{=}[d] & \\
Y_2 \ar@{.>}[r]^{d_1} \ar[d]_{g_2} & W \ar@{.>}[r]^{e_1} \ar@{.>}[d]^{e_2} & Z_1 \ar@{-->}[r]^{\varepsilon_1} &\\
Z_2 \ar@{=}[r] \ar@{-->}[d]_{\delta_2} & Z_2 \ar@{-->}[d]^{\varepsilon_2}&& \\
& & & } $$
such that $(f_2)_*\delta_1=\varepsilon_1$, $(f_1)_*\delta_2=\varepsilon_2$, $(e_1)^*\delta_1+(e_2)^{*}\delta_2=0$. Moreover,  $X\overset{\left(\begin{smallmatrix} f_1 \\ f_2 \\ \end{smallmatrix}\right) }{\longrightarrow}Y_1\oplus Y_2\overset{(d_2, -d_1) }{\longrightarrow}{W}\overset{e_1^*\delta_1}{\dashrightarrow}$ is an $\mathbb{E}$-triangle.
\end{prop}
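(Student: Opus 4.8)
The diagram together with the three extension identities is exactly Proposition~\ref{prop: pushout version 1}, so the whole point is the ``moreover'' part: that the pushout square is homotopy cartesian with connecting extension $e_1^*\delta_1$. The plan is to build the diagram directly from the $\mathbb{E}$-triangle completing the matrix inflation $\left(\begin{smallmatrix}f_1\\f_2\end{smallmatrix}\right)$, which is the dual of the construction underlying \cite[Proposition~3.15]{NP} and which makes the asserted triangle true essentially by construction. (One cannot simply take an arbitrary diagram from Proposition~\ref{prop: pushout version 1}: for such a diagram $(d_2,-d_1)$ need not even be an $\mathbb{E}$-deflation, so the diagram must be produced compatibly with the direct-sum triangle.)

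First I would note that, since $f_1$ is an $\mathbb{E}$-inflation, Corollary~\ref{cor: inflation and deflation matrix}(1) shows $\left(\begin{smallmatrix}f_1\\f_2\end{smallmatrix}\right)\colon X\to Y_1\oplus Y_2$ is an $\mathbb{E}$-inflation, and I fix an $\mathbb{E}$-triangle
$$X\xrightarrow{\left(\begin{smallmatrix}f_1\\f_2\end{smallmatrix}\right)}Y_1\oplus Y_2\xrightarrow{(d_2,\,-d_1)}W\overset{\theta}{\dashrightarrow},$$
which defines $W$, the morphisms $d_1\colon Y_2\to W$, $d_2\colon Y_1\to W$ and $\theta\in\mathbb{E}(W,X)$; here $(d_2,-d_1)\left(\begin{smallmatrix}f_1\\f_2\end{smallmatrix}\right)=0$ gives $d_2 f_1=d_1 f_2$. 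Applying $(\mathrm{ET}3)$ to $\mathrm{id}_X$ and the projection $(1,0)\colon Y_1\oplus Y_2\to Y_1$, as a partial morphism from this triangle to the given $\mathbb{E}$-triangle $X\xrightarrow{f_1}Y_1\xrightarrow{g_1}Z_1\overset{\delta_1}{\dashrightarrow}$ (note $(1,0)\left(\begin{smallmatrix}f_1\\f_2\end{smallmatrix}\right)=f_1$), produces $e_1\colon W\to Z_1$ with $e_1(d_2,-d_1)=(g_1,0)$, i.e. $e_1 d_2=g_1$ and $e_1 d_1=0$, and with $\theta=e_1^*\delta_1$. Symmetrically, the projection $(0,1)$ and the triangle $X\xrightarrow{f_2}Y_2\xrightarrow{g_2}Z_2\overset{\delta_2}{\dashrightarrow}$ yield a map which, after replacing it by its negative, gives $e_2\colon W\to Z_2$ with $e_2 d_1=g_2$, $e_2 d_2=0$ and $\theta=-e_2^*\delta_2$. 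In particular $e_1^*\delta_1+e_2^*\delta_2=0$, the third identity, and the connecting extension of the chosen triangle is exactly $\theta=e_1^*\delta_1$.

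It then remains to check that the two ``sides'' $Y_1\xrightarrow{d_2}W\xrightarrow{e_2}Z_2\overset{(f_1)_*\delta_2}{\dashrightarrow}$ and $Y_2\xrightarrow{d_1}W\xrightarrow{e_1}Z_1\overset{(f_2)_*\delta_1}{\dashrightarrow}$ are genuine $\mathbb{E}$-triangles, so that the constructed data is a legitimate instance of the diagram of Proposition~\ref{prop: pushout version 1}. The hard part will be precisely this: $d_1,d_2$ are a priori only components of a deflation, and one must show they are $\mathbb{E}$-inflations whose cones are $Z_2,Z_1$ and which realize the pushed-out extensions. I would obtain this as in the dual of \cite[Proposition~3.15]{NP}, by feeding the triangle $X\xrightarrow{\left(\begin{smallmatrix}f_1\\f_2\end{smallmatrix}\right)}Y_1\oplus Y_2\xrightarrow{(d_2,-d_1)}W$ and the split $\mathbb{E}$-triangle $Y_1\xrightarrow{\left(\begin{smallmatrix}1\\0\end{smallmatrix}\right)}Y_1\oplus Y_2\xrightarrow{(0,1)}Y_2$ into $(\mathrm{ET}4)$/$(\mathrm{ET}4)^{\mathrm{op}}$ and reading the side triangles off the resulting octahedron, using Corollary~\ref{cor: factorization} and the exact sequences of Theorem~\ref{lem:homological fundamental} to identify the maps and Corollary~\ref{cor:iso 2 of 3} to pin the objects down up to isomorphism. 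Once the diagram is in place, the $\mathbb{E}$-triangle fixed at the start is literally the asserted ``moreover'' triangle with connecting extension $\theta=e_1^*\delta_1$; any remaining sign normalisations (such as comparing $e_1^*\delta_1$ with $-e_2^*\delta_2$) are absorbed by Lemma~\ref{lem: extri iso} applied with $-\mathrm{id}$ on a suitable term.
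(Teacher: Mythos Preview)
Your proposal is correct and follows essentially the same route as the paper: begin with the $\mathbb{E}$-triangle on $\left(\begin{smallmatrix}f_1\\f_2\end{smallmatrix}\right)$ obtained from Corollary~\ref{cor: inflation and deflation matrix}, then compare it with the split triangles $Y_i\hookrightarrow Y_1\oplus Y_2\twoheadrightarrow Y_{3-i}$ to produce the side $\mathbb{E}$-triangles, and finish with sign normalisations via Lemma~\ref{lem: extri iso}. The only difference is organisational: the paper collapses your two-step plan (first $(\mathrm{ET}3)$ to obtain $e_1,e_2$ and the identities $\theta=e_1^*\delta_1=-e_2^*\delta_2$, then an $(\mathrm{ET}4)$-type argument for the side triangles) into a single application of Proposition~\ref{prop: pullback right comp} to each split triangle, which simultaneously outputs the side $\mathbb{E}$-triangle \emph{and} the extension relation $\kappa=e_1^*\delta_1=\tilde{e}_2^*\delta_2$. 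This makes your separate $(\mathrm{ET}3)$ step redundant and sidesteps the ``identification of maps'' you flag as the hard part, since the maps $d_1,d_2$ are forced by commutativity with the split inclusions and the maps $e_1,\tilde{e}_2$ come packaged with the right relations.
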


\begin{proof} \ By Corollary \ref{cor: inflation and deflation matrix}, $\left(\begin{smallmatrix} f_1 \\ f_2 \end{smallmatrix}\right)$ is an $\mathbb{E}$-inflation. Thus one has an $\mathbb{E}$-triangle $X\overset{\left(\begin{smallmatrix} f_1 \\ f_2 \\ \end{smallmatrix}\right) }{\longrightarrow}Y_1 \oplus Y_2\overset{(d_2, -d_1)}{\longrightarrow}W\overset{\kappa}{\dashrightarrow}$.
By Proposition \ref{prop: pullback right comp} there are commutative diagrams of $\mathbb{E}$-triangles

\begin{minipage}[t]{0.45\linewidth}
\centering
$$\xymatrix@R=0.25cm@C=0.4cm{&& Y_1 \ar@{=}[rr] \ar[dd]_-{\left( \begin{smallmatrix}1 \\ 0\end{smallmatrix} \right)} && Y_1 \ar@{.>}[dd]^{d_2} && \\ \\
X \ar[rr]^-{\left( \begin{smallmatrix}f_1 \\ f_2\end{smallmatrix} \right)} \ar@{=}[dd] && Y_1\oplus Y_2 \ar[rr]^-{(d_2, -d_1)} \ar[dd]_-{(0, 1)} && W \ar@{-->}[rr]^{\kappa} \ar@{.>}[dd]^{\tilde{e_2}} && \\ \\
X \ar[rr]^{f_2} && Y_2 \ar[rr]^{g_2} \ar@{-->}[dd]_{0} && Z_2 \ar@{-->}[rr]^{\delta_2} \ar@{-->}[dd]^{\tilde{\varepsilon_2}} && \\ \\
&& && && }$$
\end{minipage}
\hfill
\begin{minipage}[t]{0.45\linewidth}
\centering
$$\xymatrix@R=0.25cm@C=0.4cm{&& Y_2 \ar@{=}[rr] \ar[dd]_-{\left( \begin{smallmatrix}0 \\ 1\end{smallmatrix} \right)} &&Y_2 \ar@{.>}[dd]^{\tilde{d_1}} &&\\ \\
X \ar[rr]^-{\left( \begin{smallmatrix}f_1 \\ f_2\end{smallmatrix} \right)} \ar@{=}[dd] && Y_1\oplus Y_2 \ar[rr]^-{(d_2, -d_1)} \ar[dd]_-{(1, 0)} && W \ar@{-->}[rr]^{\kappa} \ar@{.>}[dd]^{e_1} &&\\ \\
X \ar[rr]^{f_1} && Y_1 \ar[rr]^{g_1} \ar@{-->}[dd]_{0} && Z_1 \ar@{-->}[rr]^{\delta_1} \ar@{-->}[dd]^{\tilde{\varepsilon_1}} &&\\ \\
&& && &&}$$
\end{minipage}

\vskip5pt

\noindent where $\kappa=\tilde{e_2}^*\delta_2=e_1^*\delta_1$, \ $\bigl(\begin{smallmatrix} f_1 \\ f_2 \\ \end{smallmatrix}\bigr)_*\delta_2+\bigl(\begin{smallmatrix} 1 \\ 0 \\  \end{smallmatrix}\bigr)_*\tilde{\varepsilon_2}=0$, \ $\bigl(\begin{smallmatrix} f_1 \\ f_2 \\  \end{smallmatrix}\bigr)_*\delta_1+\bigl(\begin{smallmatrix} 0 \\ 1 \\  \end{smallmatrix}\bigr)_*\tilde{\varepsilon_1}=0$.
Thus  $$-\tilde{e_2}d_1=g_2, \ \ \tilde{d_1}=-d_1, \ \ e_1d_2=g_1.$$
Applying $(1, 0)_*$  to both sides of $\bigl(\begin{smallmatrix} f_1 \\ f_2 \\  \end{smallmatrix}\bigr)_*\delta_2+\bigl(\begin{smallmatrix} 1 \\ 0 \\  \end{smallmatrix}\bigr)_*\tilde{\varepsilon_2}=0$ one gets $(f_1)_*\delta _2+\tilde{\varepsilon _2}=0$;
and applying $(0, 1)_*$ to both sides of $\bigl(\begin{smallmatrix} f _1 \\ f _2 \\  \end{smallmatrix}\bigr)_*\delta _1+\bigl(\begin{smallmatrix} 0 \\ 1 \\  \end{smallmatrix}\bigr)_*\tilde{\varepsilon _1}=0$ one gets $(f _2)_*\delta _1+\tilde{\varepsilon _1}=0$.
Let $e_2=-\tilde{e_2}$, $\varepsilon_1=-\tilde{\varepsilon_1}$, $\varepsilon_2=-\tilde{\varepsilon_2}$. Then $$e_2d_1=g_2,  \ (f_1)_*\delta _2=\varepsilon _1,  \ (f _2)_*\delta _1=\varepsilon _2,  \ e _1^*\delta _1+e _2^*\delta _2=0.$$
By Lemma \ref{lem: extri iso}, $Y_1\overset{d_2}{\longrightarrow}W\overset{e_2}{\longrightarrow}Z_2\overset{\varepsilon_1}{\dashrightarrow}$ and $Y_2\overset{d_1}{\longrightarrow}W\overset{e_1}{\longrightarrow}Z_1\overset{\varepsilon_2}{\dashrightarrow}$ are $\mathbb{E}$-triangles. This completes the proof.
\end{proof}

The composition of two homotopy cartesian squares is also homotopy cartesian.

\begin{prop}[{\cite{HXZ}}]\label{prop: composition of homotopy squares}\ Let $(\mathcal{A},\mathbb{E},\mathfrak{s})$ be an extriangulated category. Consider the following commutative diagram in $\mathcal{A}$
$$\xymatrix@R=0.4cm{X \ar[r]^{f} \ar[d]_{u} & Y \ar[r]^{g} \ar[d]_{v} &Z \ar[d]_{w} \\
X' \ar[r]^{f'} & Y' \ar[r]^{g'} & Z'.} $$
If the two squares are homotopy cartesian in $\mathcal{A}$, then the outside rectangle is  homotopy cartesian.
\end{prop}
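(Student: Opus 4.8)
The plan is to realise both hypotheses as $\mathbb{E}$-triangles and to splice them with the octahedral axiom. Saying that the left square is homotopy cartesian means that $X\xrightarrow{\left(\begin{smallmatrix}f\\u\end{smallmatrix}\right)}Y\oplus X'\xrightarrow{(v,-f')}Y'\dashrightarrow\delta_1$ is an $\mathbb{E}$-triangle, and the right square means that $Y\xrightarrow{\left(\begin{smallmatrix}g\\v\end{smallmatrix}\right)}Z\oplus Y'\xrightarrow{(w,-g')}Z'\dashrightarrow\delta_2$ is an $\mathbb{E}$-triangle; the goal is to produce the $\mathbb{E}$-triangle $X\xrightarrow{\left(\begin{smallmatrix}gf\\u\end{smallmatrix}\right)}Z\oplus X'\xrightarrow{(w,-g'f')}Z'\dashrightarrow$. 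Note first that the commutativity relations $vf=f'u$ and $wg=g'v$ give $wgf=g'f'u$, so the candidate composite $(w,-g'f')\left(\begin{smallmatrix}gf\\u\end{smallmatrix}\right)$ is already zero, as it must be for a triangle.

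First I would manufacture a second inflation out of $Y\oplus X'$. Taking the direct sum of the right-hand $\mathbb{E}$-triangle with the split triangle $X'\xrightarrow{1}X'\to 0$ yields an $\mathbb{E}$-triangle $Y\oplus X'\xrightarrow{q}Z\oplus Y'\oplus X'\xrightarrow{(w,-g',0)}Z'\dashrightarrow\delta_2$ with $q=\left(\begin{smallmatrix}g&0\\v&0\\0&1\end{smallmatrix}\right)$, and $q$ is an $\mathbb{E}$-inflation since it is the sum of an inflation and an isomorphism. Setting $p=\left(\begin{smallmatrix}f\\u\end{smallmatrix}\right)$, the composite $qp=\left(\begin{smallmatrix}gf\\vf\\u\end{smallmatrix}\right)$ is again an inflation, and Proposition \ref{prop: ET4 version homotopy} applied to $p$ and $q$ produces an $\mathbb{E}$-triangle $X\xrightarrow{qp}Z\oplus Y'\oplus X'\xrightarrow{n}E\dashrightarrow\zeta$ together with a ``cone'' $\mathbb{E}$-triangle $Y'\xrightarrow{d}E\xrightarrow{e}Z'\dashrightarrow\eta$ relating the cones $Y'$, $E$ and $Z'$ of $p$, $qp$ and $q$.

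The decisive step is to show that this cone triangle splits. Using $vf=f'u$, the elementary automorphism $\left(\begin{smallmatrix}1&0&0\\0&1&-f'\\0&0&1\end{smallmatrix}\right)$ of $Z\oplus Y'\oplus X'$ transforms $qp$ into $\left(\begin{smallmatrix}gf\\0\\u\end{smallmatrix}\right)$, so by Lemma \ref{lem: extri iso} and Corollary \ref{cor:iso 2 of 3} I may assume the composite inflation $m$ has zero component into $Y'$. Feeding the retraction $(0,1,0)\colon Z\oplus Y'\oplus X'\to Y'$, which kills $m$, into the contravariant exact sequence of Theorem \ref{lem:homological fundamental} then produces $r\colon E\to Y'$ with $rn=(0,1,0)$; hence $d$, which is $n$ composed with the inclusion of the $Y'$-summand, is a split monomorphism. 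Consequently $1_{Y'}$ factors through $d$, and Corollary \ref{cor: factorization}(2) forces $\eta=0$. By additivity of the realisation $\mathfrak{s}$ the cone triangle is then the split one, so $E\cong Y'\oplus Z'$ with $d$ and $e$ the canonical inclusion and projection.

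Finally I would read off the outer triangle. With $E=Y'\oplus Z'$ and the composite inflation normalised to $\left(\begin{smallmatrix}gf\\0\\u\end{smallmatrix}\right)$, the relations $nm=0$ and $n|_{Y'}=\left(\begin{smallmatrix}0\\1\end{smallmatrix}\right)$ let me clean up $n$ by automorphisms of the middle term and of the cone into block form, exhibiting the composite $\mathbb{E}$-triangle as the direct sum of the split triangle $0\to Y'\xrightarrow{1}Y'\dashrightarrow$ and a sequence $X\xrightarrow{\left(\begin{smallmatrix}gf\\u\end{smallmatrix}\right)}Z\oplus X'\xrightarrow{c}Z'\dashrightarrow\theta$. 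Comparing with $\mathfrak{s}(\theta)$ and using that the comparison isomorphism is lower unitriangular along the $Y'$-summand, its $Z\oplus X'$-block is an isomorphism, so this last sequence is itself an $\mathbb{E}$-triangle; tracking the deflation $(w,-g')$ of the right-hand triangle through the construction identifies $c=(w,-g'f')$, which is exactly the assertion that the outer rectangle is homotopy cartesian. I expect the main obstacle to be precisely this bookkeeping: the splitting of the cone triangle (handled cleanly by Corollary \ref{cor: factorization}) together with the cancellation of the $Y'$-summand and the final identification of $c$.
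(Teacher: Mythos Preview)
The paper does not prove this proposition; it is quoted with the citation \cite{HXZ} and no argument, so there is nothing in the paper to compare your proposal against.

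Your overall strategy---compose the two homotopy-cartesian inflations via (ET4), show that the induced cone triangle $Y'\xrightarrow{d}E\xrightarrow{e}Z'\overset{\eta}{\dashrightarrow}$ splits, then cancel the redundant $Y'$-summand---is sound. The gap is your justification that $d$ is a split monomorphism. You assert that $d$ equals the deflation $n$ composed with the inclusion $\iota_{Y'}$ of the $Y'$-summand, but (ET4) only supplies the commutativity $d\,p'=n\,q$, not this identification; concretely you would need $n(\iota_{Y'}p'-q)=0$, i.e.\ that $\left(\begin{smallmatrix}-g&0\\0&-f'\\0&-1\end{smallmatrix}\right)$ factors through the composite inflation, and there is no reason for this. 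Nor can you rescue the step by cancelling $p'$ from the relation $(rd-1)p'=0$ that your construction does produce, since deflations in an extriangulated category need not be epimorphisms.

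The repair is direct and bypasses the split-mono argument. The (ET4) relation gives $\eta=(p')_\ast\varepsilon$, where $\varepsilon\in\mathbb{E}(Z',Y\oplus X')$ is the extension of your summed triangle. Since that triangle is the direct sum of the right-square triangle with a split one, $\varepsilon=\left(\begin{smallmatrix}1\\0\end{smallmatrix}\right)_\ast\delta_2$, and hence $\eta=\bigl((v,-f')\left(\begin{smallmatrix}1\\0\end{smallmatrix}\right)\bigr)_\ast\delta_2=v_\ast\delta_2$. But $v=(0,1)\left(\begin{smallmatrix}g\\v\end{smallmatrix}\right)$ factors through the inflation of the right square, so $v_\ast\delta_2=0$ by Corollary~\ref{cor: factorization}(2), and $\eta=0$ as required. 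With this in hand the rest of your outline proceeds; the final cancellation of the $Y'$-summand and the identification of the resulting deflation as $(w,-g'f')$ still need to be written out carefully, but that is genuinely routine bookkeeping once the splitting is established.
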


\subsection{Model structures} \ A morphism $f:X\longrightarrow Y$ is a {\it retract} of a morphism $g:X'\longrightarrow Y'$, provided that there is a commutative diagram
\[\xymatrix@R=0.4cm{X \ar[r]^{\varphi_1} \ar[d]_{f} & X' \ar[r]^{\psi_1} \ar[d]_{g} & X \ar[d]^{f} \\ Y \ar[r]^{\varphi_2} & Y' \ar[r]^{\psi_2}& Y}\]
such that $\psi_1\varphi_1=\mathrm{Id}_X$, $\psi_2\varphi_2=\mathrm{Id}_Y$.

\begin{defn}[{\cite{Q2}}]\label{def:model structure}\ A {\it model structure} on category $\mathcal{M}$ is a triple $(\mathrm{CoFib},\mathrm{Fib},\mathrm{Weq})$ of classes of morphisms,
in which the morphisms are called {\it cofibrations, fibrations, weak equivalences}, respectively, satisfying the following axioms:

\vskip5pt

(CM1) (Two out of three axiom) \  Let $X\stackrel f \longrightarrow Y\stackrel g \longrightarrow Z$ be morphisms in $\mathcal{M}$. If two of the morphisms $f,\ g,\ gf$ are weak equivalences, then so is the third.

\vskip5pt

(CM2) (Retract axiom) \  If $f$ is a retract of $g$ and $g$ is a cofibration (fibration, weak equivalence, respectively), then so is $f$.

\vskip5pt

(CM3) (Lifting axiom) \  Given a commutative square
$$\xymatrix@R=0.4cm{A\ar[r]^-a \ar[d]_-i & X \ar[d]^-p \\
B\ar[r]^-b \ar@{.>}[ru]^-s & Y }$$
with  $i\in \mathrm{CoFib}$ and $p\in \mathrm{Fib}\cap\mathrm{Weq}$, if either $i\in \mathrm{CoFib}\cap\mathrm{Weq}$ or $p\in\mathrm{Fib}$, then there exists a morphism $s: B\longrightarrow X$ such that $a=si,\ b=ps$.

\vskip5pt

(CM4) (Factorization axiom) \  Any morphism $f:X\longrightarrow Y$ has two factorizations  \ $f=pi = qj$, where $i\in \mathrm{CoFib}\cap\mathrm{Weq}, \ p\in \mathrm{Fib}$,   $j\in \mathrm{CoFib}$ and $q\in\mathrm{Fib}\cap\mathrm{Weq}$.

\vskip5pt

Set $\mathrm{TCoFib}:=\mathrm{CoFib}\cap \mathrm{Weq}$  and $\mathrm{TFib}:=\mathrm{Fib}\cap \mathrm{Weq}$.  Morphisms in $\mathrm{TCoFib}$ and in $\mathrm{TFib}$ are respectively called {\it trivial cofibrations} and {\it trivial fibrations}.
\end{defn}

Let $(\mathrm{CoFib}, \mathrm{Fib}, \mathrm{Weq})$ be a model structure on a category $\mathcal{M}$ with zero object. An object $X$ is {\it cofibrant}, if $0 \longrightarrow X$ is a cofibration. Denote by $\mathcal{C}$ the class of cofibrant objects.
An object $Y$ is {\it fibrant}, if $Y \longrightarrow 0$ is a fibration. Denote by $\mathcal{F}$ the class of fibrant objects.
An object $W$ is a {\it trivial object} if $0 \longrightarrow W$ is a weak equivalence, or equivalently, $W \longrightarrow 0$ is a weak equivalence. Denote by $\mathcal{W}$ the class of trivial objects.

\subsection{Admissible model structures and Hovey triples}

\begin{defn}[{\cite[Definition 5.5]{NP}}] \ A model structure $(\mathrm{CoFib},\ \mathrm{Fib},\ \mathrm{Weq})$ on an extriangulated category $(\mathcal{A},\mathbb{E}, \mathfrak{s})$ is {\it admissible} if the following conditions are satisfied.

\vskip5pt

(1) \  $\mathrm{CoFib}=\{\text{$\mathbb{E}$-inflation} \ f \mid  \mathrm{Cone}(f)\in \mathcal{C}\}$.

\vskip5pt

(2) \  $\mathrm{Fib}=\{\text{$\mathbb{E}$-deflation} \ f \mid  \mathrm{CoCone}(f)\in \mathcal{F}\}$.

\vskip5pt

(3) \  $\mathrm{TCoFib}=\{\text{$\mathbb{E}$-inflation} \ f \mid \mathrm{Cone}(f)\in \mathcal{C}\cap \mathcal{W}\}$.	

\vskip5pt

(4) \  $\mathrm{TFib}=\{\text{$\mathbb{E}$-deflation} \ f \mid \mathrm{CoCone}(f)\in \mathcal{F}\cap \mathcal{W}\}$.
\end{defn}

For a class $\mathcal X$ of objects of an extriangulated category $(\mathcal{A},\mathbb{E}, \mathfrak{s})$,  put $\mathcal X^{\bot}: = \{Y\in \mathcal{A}\mid \mathbb{E}(\mathcal{X}, Y)=0\}$, where
$\mathbb{E}(\mathcal{X}, Y)=0$ means that $\mathbb{E}(X, Y)=0$ for all $X\in \mathcal{X}.$ Similarly for $\mathbb{E}(Y, \mathcal{X})=0$ and  ${}^{\bot}\mathcal{X}$.

\begin{defn}[{ \cite[Definition 4.1]{NP}}] \ Let $(\mathcal{A},\mathbb{E}, \mathfrak{s})$ be an extriangulated category. A pair $(\mathcal{X}, \mathcal{Y})$  of classes of objects is a {\it cotorsion pair}, if
$\mathcal{X}={}^{\bot}\mathcal{Y}$ and $\mathcal{Y}=\mathcal{X}^{\bot}$.

\vskip5pt

A cotorsion pair $(\mathcal{X},\mathcal{Y})$ is {\it complete} if for any object $A\in \mathcal{A}$, there are $\mathbb{E}$-triangles
$$
A \longrightarrow Y \longrightarrow X \dashrightarrow,\qquad Y' \longrightarrow  X' \longrightarrow  A\dashrightarrow,
$$
where $Y \in \mathcal{Y},\ X \in \mathcal{X},\ X'\in \mathcal{X}, \ Y'\in \mathcal{Y}$.
\end{defn}

\begin{defn}\ Let $(\mathcal{A},\mathbb{E}, \mathfrak{s})$ be an extriangulated category. A triple $(\mathcal{C}, \mathcal{F}, \mathcal{W})$ of classes of objects of  $\mathcal{A}$ is a {\it Hovey triple}, if   $(\mathcal{C}\cap \mathcal{W}, \mathcal{F})$ and  $(\mathcal{C}, \mathcal{F}\cap \mathcal{W})$ are complete cotorsion pairs in $\mathcal{A}$, and
$\mathcal{W}$ satisfies the ``two out of three'' property for $\mathbb{E}$-triangles, i.e., whenever two out of three terms in an $\mathbb{E}$-triangle are in $\mathcal{W}$, so is the third.
\end{defn}

An additive category $\mathcal{A}$ is {\it weakly idempotent complete}, if any splitting monomorphism in $\mathcal{A}$ has a cokernel, or equivalently, if any splitting epimorphism in $\mathcal{A}$ has a kernel.

\begin{lem}[{\cite[Condition 5.8]{NP}, \cite[Proposition C]{K}}]\ Let $(\mathcal{A},\mathbb{E}, \mathfrak{s})$ be an extriangulated category. Then the following are equivalent.

\vskip5pt

$(1)$ \ Any splitting monomorphism in $\mathcal{A}$ has a cokernel.

\vskip5pt

$(2)$ \ Any splitting epimorphism in $\mathcal{A}$ has a kernel.

\vskip5pt

$(3)$ \ Any splitting monomorphism in $\mathcal{A}$ is an $\mathbb{E}$-inflation.

\vskip5pt

$(4)$ \ Any splitting epimorphism in $\mathcal{A}$ is an $\mathbb{E}$-deflation.

\vskip5pt

$(5)$ \ If $ki$ is an $\mathbb{E}$-inflation, then so is $i$.

\vskip5pt

$(6)$ \ If $de$ is an $\mathbb{E}$-deflation, then so is $d$.\end{lem}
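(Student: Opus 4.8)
The plan is to exploit the self-duality of the statement together with the diagram lemmas already available. Since $(\mathcal A^{\mathrm{op}},\mathbb E^{\mathrm{op}},\mathfrak s^{\mathrm{op}})$ is again extriangulated and interchanges $\mathbb E$-inflations with $\mathbb E$-deflations, split monomorphisms with split epimorphisms, and cokernels with kernels, the conditions $(2),(4),(6)$ are precisely the $\mathcal A^{\mathrm{op}}$-versions of $(1),(3),(5)$. Hence it suffices to establish $(1)\Leftrightarrow(3)$, $(3)\Leftrightarrow(5)$, and a single genuine bridge $(1)\Rightarrow(2)$ between the two sides; the remaining implications then follow by passing to $\mathcal A^{\mathrm{op}}$.

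For $(3)\Rightarrow(1)$ I would take a split monomorphism $i:X\to Y$ with retraction $r$; by $(3)$ it sits in an $\mathbb E$-triangle $X\overset{i}{\longrightarrow}Y\overset{g}{\longrightarrow}Z\overset{\delta}{\dashrightarrow}$, and since $\mathrm{Id}_X=ri$ factors through $i$, Corollary \ref{cor: factorization}$(2)$ forces $\delta=(\mathrm{Id}_X)_*\delta=0$. Thus the triangle is equivalent to the split one of $(\mathrm{ET}2)(2)$ and $g$ is a cokernel of $i$. Conversely, for $(1)\Rightarrow(3)$, if $i$ moreover has a cokernel $c:Y\to C$, the standard biproduct computation shows $\binom{r}{c}:Y\to X\oplus C$ is an isomorphism carrying $i$ to the canonical inclusion $\binom{1}{0}$, whence $i$ is an $\mathbb E$-inflation by $(\mathrm{ET}2)(2)$ and Lemma \ref{lem: extri iso}. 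The bridge $(1)\Rightarrow(2)$ is the same biproduct argument applied to a section of a given split epimorphism, i.e.\ the classical fact that weak idempotent completeness is left--right symmetric; its $\mathcal A^{\mathrm{op}}$-dual gives $(2)\Rightarrow(1)$. Likewise $(5)\Rightarrow(3)$ is easy: realize any split monomorphism $i$ with retraction $r$ as the inner factor of an obvious inflation by setting $k=\left(\begin{smallmatrix}r\\ \mathrm{Id}_Y-ir\end{smallmatrix}\right):Y\to X\oplus Y$, so that $ki=\left(\begin{smallmatrix}\mathrm{Id}_X\\ 0\end{smallmatrix}\right)$ is the canonical inclusion (an $\mathbb E$-inflation), and $(5)$ applies.

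The hard part is $(3)\Rightarrow(5)$. Given that $ki$ is an $\mathbb E$-inflation, Corollary \ref{cor: inflation and deflation matrix}$(1)$ makes $\binom{ki}{i}:X\to Z\oplus Y$ an $\mathbb E$-inflation, and composing with the automorphism $\left(\begin{smallmatrix}1&-k\\0&1\end{smallmatrix}\right)$ of $Z\oplus Y$ turns it into $\binom{0}{i}$, still an $\mathbb E$-inflation by Lemma \ref{lem: extri iso}. Write its $\mathbb E$-triangle as $X\overset{\binom{0}{i}}{\longrightarrow}Z\oplus Y\overset{(a,b)}{\longrightarrow}C\overset{\omega}{\dashrightarrow}$. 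I would first use the contravariant exact sequence of Theorem \ref{lem:homological fundamental} with $W=Z$ to lift the projection $(1,0):Z\oplus Y\to Z$, which is annihilated by precomposition with $\binom{0}{i}$, to a morphism $\rho:C\to Z$ with $\rho(a,b)=(1,0)$; thus $a:Z\to C$ is a split monomorphism with $\rho a=\mathrm{Id}_Z$ and $\rho b=0$. Invoking $(3)$ (already shown equivalent to $(1)$), the object $Z$ splits off as a direct summand of $C$: with $c:C\to C_0=\mathrm{Coker}(a)$ the isomorphism $\binom{\rho}{c}:C\to Z\oplus C_0$ carries $(a,b)$ to $\mathrm{Id}_Z\oplus\bar b$, where $\bar b:=cb$, so by Lemma \ref{lem: extri iso} the triangle becomes $X\overset{\binom{0}{i}}{\longrightarrow}Z\oplus Y\overset{\mathrm{Id}_Z\oplus\bar b}{\longrightarrow}Z\oplus C_0\dashrightarrow$.

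Finally, Corollary \ref{cor: factorization}$(1)$ shows the resulting extension pulls back to $0$ along the inclusion $Z\to Z\oplus C_0$, so by the additivity axiom $(\mathrm{ET}2)(3)$ this triangle is the direct sum of the split triangle on $Z$ and an $\mathbb E$-triangle realizing some $\omega_0\in\mathbb E(C_0,X)$; comparing the two realizations of the same extension produces an isomorphism on middle terms exhibiting an $\mathbb E$-triangle $X\overset{i}{\longrightarrow}Y\longrightarrow C_0\dashrightarrow$, so $i$ is an $\mathbb E$-inflation. The $\mathcal A^{\mathrm{op}}$-dual of this argument yields $(4)\Rightarrow(6)$, closing the cycle. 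I expect $(3)\Rightarrow(5)$ to be the main obstacle: unlike the triangulated case, where every morphism is an inflation and $(5)$ is vacuous, here one must genuinely reconstruct $\mathrm{Cone}(i)$ by peeling off the spurious summand $Z$, and the single place where weak idempotent completeness is used in an essential way is the step that splits $a$.
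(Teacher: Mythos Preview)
The paper does not supply its own proof of this lemma; it is simply quoted from \cite[Condition 5.8]{NP} and \cite[Proposition C]{K}, so there is nothing to compare your argument against.

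Your proof is essentially correct and self-contained. The one place that is stated too quickly is the last step of $(3)\Rightarrow(5)$. After splitting off $Z$ you have two realizations of the same extension $\omega'\in\mathbb E(Z\oplus C_0,X)$, namely
\[
X\xrightarrow{\binom{0}{i}}Z\oplus Y\xrightarrow{\mathrm{Id}_Z\oplus\bar b}Z\oplus C_0
\quad\text{and}\quad
X\xrightarrow{\binom{0}{i_0}}Z\oplus Y_0\xrightarrow{\mathrm{Id}_Z\oplus g_0}Z\oplus C_0,
\]
and you assert that the resulting middle-term isomorphism $\varphi:Z\oplus Y\to Z\oplus Y_0$ ``exhibits an $\mathbb E$-triangle $X\overset{i}{\longrightarrow}Y\longrightarrow C_0$''. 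This is not automatic: a priori $\varphi$ need not respect the direct-sum decomposition. What saves you is the deflation compatibility $(\mathrm{Id}_Z\oplus g_0)\varphi=\mathrm{Id}_Z\oplus\bar b$, which forces $\varphi=\left(\begin{smallmatrix}\mathrm{Id}_Z&0\\ \varphi_{21}&\varphi_{22}\end{smallmatrix}\right)$ with $\varphi_{22}:Y\to Y_0$ an isomorphism; then the inflation compatibility gives $\varphi_{22}\,i=i_0$, and Lemma \ref{lem: extri iso} transports the $\mathbb E$-triangle $X\overset{i_0}{\to}Y_0\to C_0$ to one starting with $i$. You should make this explicit.
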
	

\vskip5pt

An extriangulated category satisfying the above equivalent conditions is called a {\it weakly idempotent complete} extriangulated category. Nakaoka and Palu \cite{NP} have extended Hovey's correspondence to extriangulated categories.

\begin{thm}[{\cite[Section 5]{NP}}]\ Assume $(\mathcal{A},\mathbb{E}, \mathfrak{s})$ is a weakly idempotent complete extriangulated category.
Then there is a one-to-one correspondence between admissible model structures on $\mathcal{A}$ and Hovey triples in $\mathcal{A}$, given by
$$(\mathrm{CoFib},\  \mathrm{Fib},\  \mathrm{Weq})\mapsto (\mathcal C, \mathcal F, \mathcal W)$$
where $\mathcal C$, $\mathcal F$, and $\mathcal W$ are the classes of cofibrant objects, fibrant objects, and trivial objects, respectively; and the inverse is given by
$$(\mathcal{C},\ \mathcal{F},\ \mathcal{W})\mapsto (\mathrm{CoFib},\  \mathrm{Fib},\  \mathrm{Weq})
$$
where $\mathrm{CoFib}=\{\text{$\mathbb{E}$-inflation}\ f\mid \mathrm{Cone}(f)\in \mathcal{C}\}$, \ $\mathrm{Fib} =\{\text{$\mathbb{E}$-deflation\ }f\mid \mathrm{CoCone}(f)\in \mathcal{F}\}$ and
$$\mathrm{Weq} =\{pi\mid i \ \text{is an $\mathbb{E}$-inflation}, \mathrm{Cone}(i)\in \mathcal{C}\cap \mathcal{W}, \ p\ \text{is an $\mathbb{E}$-deflation},  \mathrm{CoCone}(p)\in \mathcal{F}\cap \mathcal{W}\}.$$
\end{thm}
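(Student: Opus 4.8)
The plan is to show the two assignments are mutually inverse by translating each model axiom into a statement about $\mathbb{E}$-orthogonality and $\mathbb{E}$-triangles, with the homological exact sequences of Theorem~\ref{lem:homological fundamental} and the factorization criterion of Corollary~\ref{cor: factorization} serving as the dictionary. A preliminary observation used throughout is that $\mathcal{C}$, $\mathcal{F}$ and $\mathcal{W}$ are closed under direct summands: if $X$ is a summand of an object of such a class, then $0\to X$ (resp. $X\to 0$) is a retract of the corresponding cofibration, fibration or weak equivalence, so the retract axiom (CM2) returns $X$ to the class; dually $\mathrm{CoFib}$, $\mathrm{Fib}$, $\mathrm{Weq}$ and their trivial versions are retract-closed.

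For the forward direction I would first verify that $(\mathcal{C}\cap\mathcal{W},\mathcal{F})$ and $(\mathcal{C},\mathcal{F}\cap\mathcal{W})$ are complete cotorsion pairs. The vanishing $\mathbb{E}(\mathcal{C}\cap\mathcal{W},\mathcal{F})=0$ comes from lifting: realizing $\delta\in\mathbb{E}(X,Y)$ with $X\in\mathcal{C}\cap\mathcal{W}$, $Y\in\mathcal{F}$ as $Y\to M\overset{p}{\longrightarrow} X\dashrightarrow$ makes $p$ a fibration and $0\to X$ a trivial cofibration, so (CM3) yields a section of $p$ and hence $\delta=0$; the vanishing $\mathbb{E}(\mathcal{C},\mathcal{F}\cap\mathcal{W})=0$ is the dual lifting. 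To identify the orthogonal classes, I would factor $Y\to 0$ (resp. $0\to X$) by (CM4); the resulting $\mathbb{E}$-extension then splits under the hypothesis $Y\in(\mathcal{C}\cap\mathcal{W})^{\perp}$ (resp. $X\in{}^{\perp}\mathcal{F}$), presenting $Y$ (resp. $X$) as a summand of a fibrant (resp. cofibrant-and-trivial) object, and summand-closure concludes; completeness of both pairs is exactly the two (CM4) factorizations of $A\to 0$ and $0\to A$ read through the admissibility descriptions. For the two-out-of-three property of $\mathcal{W}$ I would prove the key lemma that an $\mathbb{E}$-inflation $f$ lies in $\mathrm{Weq}$ iff $\mathrm{Cone}(f)\in\mathcal{W}$ (dually for deflations); then, for an $\mathbb{E}$-triangle $X\overset{f}{\longrightarrow} Y\overset{g}{\longrightarrow} Z\dashrightarrow$, applying (CM1) to $0\to X\overset{f}{\longrightarrow} Y$ and reading off that $0\to X$, $0\to Y$ are weak equivalences iff $X,Y\in\mathcal{W}$ while $f$ is one iff $Z=\mathrm{Cone}(f)\in\mathcal{W}$ gives the third membership from any two.

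For the backward direction I would define the three classes by the stated formulas and verify (CM1)--(CM4). The retract axiom (CM2) is immediate from the retract-closure of cotorsion classes and of $\mathcal{W}$. For the lifting axiom (CM3), Corollary~\ref{cor: factorization} is decisive: for a cofibration $i$ with $\mathrm{Cone}(i)\in\mathcal{C}$ and a trivial fibration $p$ with $\mathrm{CoCone}(p)\in\mathcal{F}\cap\mathcal{W}$, the obstruction to filling a given square is an $\mathbb{E}$-extension lying in $\mathbb{E}(\mathrm{Cone}(i),\mathrm{CoCone}(p))=0$, which the two exact sequences of Theorem~\ref{lem:homological fundamental} convert into the desired diagonal; the trivial-cofibration-versus-fibration case is symmetric. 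The factorization axiom (CM4) is obtained by first writing an arbitrary $f:X\to Y$ as $(0,1)\binom{1}{f}$, a split $\mathbb{E}$-inflation followed by a split $\mathbb{E}$-deflation (Corollary~\ref{cor: inflation and deflation matrix} and weak idempotent completeness), and then inserting the special objects furnished by the completeness of $(\mathcal{C},\mathcal{F}\cap\mathcal{W})$ and $(\mathcal{C}\cap\mathcal{W},\mathcal{F})$ applied to the relevant cone and cocone.

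The main obstacle is the two-out-of-three axiom (CM1) for the class $\mathrm{Weq}$ built from the triple, together with the factorization-independence it presupposes. I expect to handle this as in Hovey and Gillespie: prove that a morphism is a weak equivalence iff its cofibration part in some (equivalently, every) $(\mathrm{CoFib},\mathrm{TFib})$-factorization is itself a weak equivalence, reduce a composite of two such maps to a single diagram by pasting homotopy-cartesian squares via Proposition~\ref{prop: composition of homotopy squares} and the $3\times 3$ constructions of Propositions~\ref{prop: pushout homotopy} and \ref{prop: ET4 version homotopy}, and then invoke the two-out-of-three property of $\mathcal{W}$ for $\mathbb{E}$-triangles established above. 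Once (CM1) holds, the identifications $\mathrm{TCoFib}=\mathrm{CoFib}\cap\mathrm{Weq}$ and $\mathrm{TFib}=\mathrm{Fib}\cap\mathrm{Weq}$ and the mutual inverseness of the two assignments follow by matching the admissibility descriptions against the cotorsion-pair data.
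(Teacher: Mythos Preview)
The paper does not give its own proof of this statement: it is quoted verbatim as a result of Nakaoka--Palu \cite[Section 5]{NP} and used as background, with no argument supplied. So there is nothing in the paper to compare your proposal against.

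For what it is worth, your outline is a faithful sketch of the Hovey--Gillespie strategy as carried out in \cite{NP}: the forward direction via lifting/factorization to get the cotorsion pairs and the cone/cocone criterion of Lemma~\ref{lem: weq} to get two-out-of-three for $\mathcal{W}$, and the backward direction with (CM1) as the real work. Two places where your sketch would need sharpening in an actual proof are (i) the (CM4) factorization, where the passage from the naive split decomposition $(0,1)\binom{1}{f}$ to a genuine (CoFib, TFib)-factorization requires a pushout/pullback construction along the approximation sequences rather than merely ``inserting objects'', and (ii) the (CM3) lifting, where one must be explicit that the obstruction is not a single $\mathbb{E}$-extension but is killed in two steps using Theorem~\ref{lem:homological fundamental} and Corollary~\ref{cor: factorization}. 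These are standard refinements and do not represent genuine gaps in your plan.
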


\subsection{The homotopy category of admissible model structures} \ Let $\mathcal{S}$ be a class of morphisms in a category $\mathcal{M}$.
By P. Gabriel - M. Zisman \cite{GZ}, the localization category $\mathcal{M}[\mathcal{S}^{-1}]$ of $\mathcal{M}$ with respect to $\mathcal{S}$ always exists.

\begin{lem}[{\cite[Lemma 2.2.1]{HK}}]\label{addlocal} \ Let $\mathcal{C}$ be an additive category and $\mathcal{S}$ a class of morphisms of $\mathcal{C}$.
Suppose that ${\rm Id}_X\in \mathcal{S}, \ \forall \ X\in\mathcal C$,  and that $\mathcal S$ is closed under coproducts, i.e., if $\sigma$ and $\tau$ are in $\mathcal{S}$ then $\sigma\oplus \tau$ is in $\mathcal{S}$.
Then $\mathcal{C}[\mathcal{S}^{-1}]$ is an additive category and the localization functor $\mathcal{C}\longrightarrow \mathcal{C}[\mathcal{S}^{-1}]$ is additive.
\end{lem}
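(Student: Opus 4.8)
The plan is to transport the biproduct (additive) structure of $\mathcal{C}$ along the localization functor $\gamma\colon\mathcal{C}\to\mathcal{C}[\mathcal{S}^{-1}]$, whose existence is guaranteed by Gabriel--Zisman. Recall that $\mathcal{C}[\mathcal{S}^{-1}]$ has the same objects as $\mathcal{C}$ and that $\gamma$ is the identity on objects. The entire argument rests on first descending the biproduct bifunctor $\oplus\colon\mathcal{C}\times\mathcal{C}\to\mathcal{C}$ to the localization, and the two hypotheses on $\mathcal{S}$ are used precisely for this.

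First I would descend $\oplus$. I would invoke the standard fact that a product of localization functors is again a localization: since $\Id_X\in\mathcal{S}$ for all $X$, the functor $\gamma\times\gamma\colon\mathcal{C}\times\mathcal{C}\to\mathcal{C}[\mathcal{S}^{-1}]\times\mathcal{C}[\mathcal{S}^{-1}]$ is a localization of $\mathcal{C}\times\mathcal{C}$ at the class $\mathcal{S}\times\mathcal{S}=\{(\sigma,\tau)\mid\sigma,\tau\in\mathcal{S}\}$. Here the hypothesis $\Id_X\in\mathcal{S}$ is indispensable: it guarantees that inverting this ``diagonal'' class already inverts $(\sigma,\Id)$ and $(\Id,\tau)$, which generate all the morphisms one must invert. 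Because $\mathcal{S}$ is closed under $\oplus$, the composite $\gamma\circ\oplus$ sends every $(\sigma,\tau)\in\mathcal{S}\times\mathcal{S}$ to $\gamma(\sigma\oplus\tau)$, an isomorphism; so by the universal property of $\gamma\times\gamma$ there is a unique functor $\overline{\oplus}\colon\mathcal{C}[\mathcal{S}^{-1}]\times\mathcal{C}[\mathcal{S}^{-1}]\to\mathcal{C}[\mathcal{S}^{-1}]$ with $\overline{\oplus}\circ(\gamma\times\gamma)=\gamma\circ\oplus$.

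Next I would define, for $a,b\colon\gamma X\to\gamma Y$ in $\mathcal{C}[\mathcal{S}^{-1}]$, the addition
\[
a+b:=\gamma(\nabla_Y)\circ(a\mathbin{\overline{\oplus}}b)\circ\gamma(\Delta_X),
\]
where $\Delta_X\colon X\to X\oplus X$ and $\nabla_Y\colon Y\oplus Y\to Y$ are the diagonal and codiagonal in $\mathcal{C}$ and $a\mathbin{\overline{\oplus}}b=\overline{\oplus}(a,b)$. Since $\overline{\oplus}(\gamma f,\gamma g)=\gamma(f\oplus g)$, one sees at once that $\gamma(f)+\gamma(g)=\gamma(f+g)$, so $\gamma$ preserves addition on morphisms coming from $\mathcal{C}$. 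I would then check that $+$ makes each $\mathcal{C}[\mathcal{S}^{-1}](\gamma X,\gamma Y)$ an abelian group with bilinear composition. Every axiom (commutativity, associativity, the unit law with unit $\gamma(0_{X,Y})$, and left/right distributivity) is the image under $\gamma$ of the corresponding coherence among $\oplus$, $\Delta$, $\nabla$, the symmetry and the unitors of the biproduct of $\mathcal{C}$; these structural maps descend to \emph{natural} transformations on $\mathcal{C}[\mathcal{S}^{-1}]$, because naturality passes both to $\gamma$-images and to formal inverses of morphisms of $\mathcal{S}$, and each identity then follows from naturality of the relevant transformation together with functoriality of $\overline{\oplus}$. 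The additive inverse of $a$ is $\gamma(-\Id_Y)\circ a$, since $\gamma(\Id_Y)+\gamma(-\Id_Y)=\gamma(\Id_Y-\Id_Y)=\gamma(0_{Y,Y})$ is the unit and left distributivity gives $a+\gamma(-\Id_Y)a=(\Id_{\gamma Y}+\gamma(-\Id_Y))a=0$. With this preadditive structure, $\gamma(0)$ is a zero object: the additive unit of $\operatorname{End}(\gamma 0)$ is $\gamma(0_{0,0})=\gamma(\Id_0)=\Id_{\gamma 0}$, and in a preadditive category an object whose identity is its zero endomorphism is a zero object. Finally, $\gamma(X\oplus Y)$ with the images $\gamma(\iota_X),\gamma(\iota_Y),\gamma(\pi_X),\gamma(\pi_Y)$ satisfies the biproduct equations: the relations $\pi_X\iota_X=\Id_X$, $\pi_X\iota_Y=0$, etc., transport by functoriality, while $\iota_X\pi_X+\iota_Y\pi_Y=\Id_{X\oplus Y}$ transports because $\gamma$ preserves composition and addition. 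In a preadditive category these equations force a biproduct, so $\mathcal{C}[\mathcal{S}^{-1}]$ has finite biproducts, is additive, and $\gamma$ is additive.

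I expect the difficulty to be twofold. The conceptual heart is the descent of $\oplus$, which is exactly where both hypotheses on $\mathcal{S}$ are used. The main \emph{technical} subtlety is that, since no calculus of fractions is assumed, a general morphism of $\mathcal{C}[\mathcal{S}^{-1}]$ need not be of the form $\gamma(f)$; hence the group and bilinearity axioms cannot be read off from $\gamma(f)+\gamma(g)=\gamma(f+g)$ alone, but must be established for \emph{arbitrary} localized morphisms. The device that makes this work is to rephrase every axiom as the naturality of a structural transformation ($\Delta$, $\nabla$, the symmetry, the unitors) and then use functoriality of $\overline{\oplus}$, so that the unknown morphisms are simply carried along one tensor factor.
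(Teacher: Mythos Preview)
The paper does not prove this lemma; it merely cites it from \cite[Lemma 2.2.1]{HK} and moves on. So there is no in-paper argument to compare against. Your proof is correct and is essentially the standard one given in the cited reference: descend $\oplus$ via the universal property of the product localization (this is where both hypotheses on $\mathcal{S}$ enter), define addition by $\nabla\circ(-\mathbin{\overline{\oplus}}-)\circ\Delta$, and verify the axioms by noting that the structural transformations $\Delta,\nabla$, symmetry, and unitors remain natural after localization. Your explicit remark that the axioms must be checked for \emph{arbitrary} morphisms in $\mathcal{C}[\mathcal{S}^{-1}]$, not just those in the image of $\gamma$, and that this is handled by naturality rather than by a calculus of fractions, is exactly the point that is sometimes glossed over.
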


\begin{defn}[\cite{Q1}] \ The {\it homotopy category} of a model structure $(\mathrm{CoFib},\  \mathrm{Fib},\  \mathrm{Weq})$ on a category $\mathcal{M}$ with zero object is the localization category $\mathcal{M}[\mathrm{Weq}^{-1}]$, which is denoted by $\mathrm{Ho}(\mathcal{M})$. Denote by $\gamma: \mathcal{M}\longrightarrow \mathrm{Ho}(\mathcal{M})$ the localization functor.
\end{defn}

Let $(\mathrm{CoFib},\  \mathrm{Fib},\  \mathrm{Weq})$ be a model structure on a weakly idempotent complete extriangulated category $(\mathcal{A},\mathbb{E}, \mathfrak{s})$. By Lemma \ref{addlocal}, $\mathrm{Ho}(\mathcal{A})$ is an additive category, and  $\gamma:\mathcal{A}\longrightarrow \mathrm{Ho}(\mathcal{A})$ is an additive functor.

\vskip5pt

\begin{thm}[{\cite[Theorem 1.1]{LZ}}]\label{QuillenThm} \ Let $(\mathrm{CoFib},\ \mathrm{Fib},\ \mathrm{Weq})$ be a model structure on a weakly idempotent complete additive category $\cal A$. Then the composition of the embedding $\mathcal{C}\cap\mathcal{F}\hookrightarrow \mathcal{A}$ and the localization functor $\gamma: \mathcal{A}\longrightarrow \mathrm{Ho}(\mathcal{A})$ induces an equivalence of categories $$(\mathcal{C}\cap\mathcal{F})/(\mathcal{C}\cap\mathcal{F}\cap\mathcal{W})\cong\mathrm{Ho}(\mathcal{A}),$$
where $\cal C$, $\cal F$, $\cal W$ are the classes of cofibrant objects, fibrant objects, and trivial objects, respectively.
\end{thm}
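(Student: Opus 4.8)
The plan is to adapt the additive form of Quillen's fundamental theorem of model categories. Write $\bar\gamma$ for the functor $(\mathcal{C}\cap\mathcal{F})/(\mathcal{C}\cap\mathcal{F}\cap\mathcal{W})\longrightarrow\Ho(\mathcal{A})$ that I want to construct, and first check that the composite $\mathcal{C}\cap\mathcal{F}\hookrightarrow\mathcal{A}\xrightarrow{\ \gamma\ }\Ho(\mathcal{A})$ really descends to the additive quotient. For any $W\in\mathcal{W}$ the morphism $0\longrightarrow W$ is a weak equivalence, so $\gamma(W)$ is isomorphic to the zero object of $\Ho(\mathcal{A})$; consequently every morphism between cofibrant-fibrant objects that factors through an object of $\mathcal{C}\cap\mathcal{F}\cap\mathcal{W}$ is sent to $0$ by $\gamma$. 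Since $\gamma$ is additive (Lemma \ref{addlocal}), it factors through the additive quotient, producing $\bar\gamma$.

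For essential surjectivity I would use the factorization axiom $(\mathrm{CM}4)$ to build replacements. Factoring $0\longrightarrow A$ as $0\longrightarrow QA\overset{q}{\longrightarrow}A$ with the first map a cofibration and $q\in\TFib$ makes $QA$ cofibrant and $q$ a weak equivalence; factoring $QA\longrightarrow 0$ as $QA\overset{i}{\longrightarrow}RQA\longrightarrow 0$ with $i\in\TCoFib$ and $RQA\longrightarrow 0$ a fibration makes $RQA$ fibrant, while $0\longrightarrow QA\longrightarrow RQA$ is a composite of cofibrations, so $RQA\in\mathcal{C}\cap\mathcal{F}$. The zigzag $A\overset{q}{\longleftarrow}QA\overset{i}{\longrightarrow}RQA$ consists of weak equivalences, hence $\gamma(A)\cong\gamma(RQA)=\bar\gamma(RQA)$ in $\Ho(\mathcal{A})$, and $\bar\gamma$ is dense.

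The crux is full faithfulness, for which I would pass through the homotopy calculus on cofibrant-fibrant objects. For $X\in\mathcal{C}$ and $Y\in\mathcal{F}$, Quillen's theorem gives that left and right homotopy of morphisms $X\longrightarrow Y$ agree and define an equivalence relation $\simeq$, and that $\gamma$ induces a bijection between the set $[X,Y]$ of homotopy classes and $\Ho(\mathcal{A})(X,Y)$. It then remains to prove the additive dictionary: for $X,Y\in\mathcal{C}\cap\mathcal{F}$ one has $f\simeq g$ if and only if $f-g$ factors through an object of $\mathcal{C}\cap\mathcal{F}\cap\mathcal{W}$. One implication is immediate from the first paragraph; for the other I would construct a cylinder object for $X$ from $(\mathrm{CM}4)$, read a homotopy $f\simeq g$ as a map out of this cylinder, and use additivity (together with $\sigma(i_0-i_1)=0$ for the structure maps $i_0,i_1,\sigma$ of the cylinder) to factor $f-g$ through a trivial object, adjusting it by further replacement so that the factoring object lies in $\mathcal{C}\cap\mathcal{F}\cap\mathcal{W}$. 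Granting the dictionary, $[X,Y]$ is exactly $\mathcal{A}(X,Y)$ modulo the morphisms factoring through $\mathcal{C}\cap\mathcal{F}\cap\mathcal{W}$, which is the Hom-set of the quotient category, so $\bar\gamma$ is bijective on Hom-sets.

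I expect the additive dictionary, and fullness packaged inside Quillen's bijection, to be the main obstacle. Fullness secretly requires reducing an a priori arbitrary zigzag of fractions in $\Ho(\mathcal{A})$ to a genuine morphism between cofibrant-fibrant representatives, which is driven by repeated use of the lifting axiom $(\mathrm{CM}3)$; the faithfulness half---turning ``$\gamma(f)=\gamma(g)$'' into an explicit factorization of $f-g$ through a trivial cofibrant-fibrant object---is the subtler point, and it is precisely here that weak idempotent completeness is needed, to guarantee that the cones, cocones and splittings produced in the cylinder construction exist and keep the intermediate objects inside the prescribed classes.
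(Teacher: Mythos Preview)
The paper does not supply its own proof of this theorem: it is quoted verbatim from \cite[Theorem 1.1]{LZ} and used as a black box, so there is no argument in the paper to compare your proposal against.

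Your outline is the standard route and is largely sound, but there is one genuine soft spot in the faithfulness direction. You write that from a homotopy $H$ and $\sigma(i_0-i_1)=0$ you will ``factor $f-g$ through a trivial object''. What $\sigma(i_0-i_1)=0$ gives you, after invoking weak idempotent completeness, is a factorization of $i_0-i_1$ through $K:=\ker\sigma$ (since $\sigma$ is a split epimorphism via $i_0$). You then need $K\in\mathcal{W}$, and this does not follow from $\sigma\in\mathrm{Weq}$ alone; it requires an extra argument. The clean fix is a retract argument: because $X$ is cofibrant, the inclusion $X\hookrightarrow X\oplus X$ is a pushout of $0\to X$ and hence a cofibration, so $i_0=i\circ(\text{incl})$ is a trivial cofibration; writing $\mathrm{Cyl}(X)\cong K\oplus X$ with $\sigma$ the projection, one checks directly that $0\to K$ is a retract of $i_0$, hence itself a trivial cofibration, so $K\in\mathcal{C}\cap\mathcal{W}$. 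Only then does your ``adjusting by further replacement'' step (fibrant replacement of $K$ plus a lift against the fibration $Y\to 0$) legitimately land you in $\mathcal{C}\cap\mathcal{F}\cap\mathcal{W}$. Dually, one can run the argument with a very good path object for $Y$ and $\operatorname{coker}(s)\in\mathcal{F}\cap\mathcal{W}$.

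Everything else in your plan --- the descent of $\gamma$ to the additive quotient, essential surjectivity via $RQ$-replacement, and the appeal to Quillen's bijection $[X,Y]\cong\mathrm{Ho}(\mathcal{A})(X,Y)$ for cofibrant-fibrant $X,Y$ --- is correct and is exactly how the cited reference proceeds. Your closing remark that weak idempotent completeness is what makes the splittings exist is on target; just be explicit that it is the \emph{retract} of a trivial (co)fibration, not merely the splitting, that forces the intermediate object into $\mathcal{W}$.
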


\begin{cor}\label{CcapF} \ Let $(\mathrm{CoFib},\ \mathrm{Fib},\ \mathrm{Weq})$ be an admissible model structure on a weakly idempotent complete extriangulated category $(\mathcal{A},\mathbb{E}, \mathfrak{s})$, and $(\cal C, \cal F, \cal W)$ be the corresponding Hovey triple. For any morphism $f:X\longrightarrow Y$ in $\mathrm{Ho}(\mathcal{A})$ with $X,\ Y\in \mathcal{C}\cap\mathcal{F}$, there exists a morphism $f':X\longrightarrow Y$ in $\mathcal{A}$ such that $f=\gamma(f')$.
\end{cor}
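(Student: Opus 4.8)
The plan is to deduce the statement immediately from Theorem \ref{QuillenThm}, with no further use of the extriangulated structure. First I would unwind what Theorem \ref{QuillenThm} gives on morphisms: it asserts that the composite $\mathcal{C}\cap\mathcal{F}\hookrightarrow\mathcal{A}\overset{\gamma}{\longrightarrow}\mathrm{Ho}(\mathcal{A})$ factors through the additive quotient $(\mathcal{C}\cap\mathcal{F})/(\mathcal{C}\cap\mathcal{F}\cap\mathcal{W})$ and induces an equivalence
$$\bar{\gamma}\colon (\mathcal{C}\cap\mathcal{F})/(\mathcal{C}\cap\mathcal{F}\cap\mathcal{W})\overset{\sim}{\longrightarrow}\mathrm{Ho}(\mathcal{A}).$$
Since the objects of the quotient category coincide with the objects of $\mathcal{C}\cap\mathcal{F}$, and a morphism $X\to Y$ there is by construction the class $[f']$ of an honest morphism $f'\in\mathcal{A}(X,Y)$ modulo those factoring through an object of $\mathcal{C}\cap\mathcal{F}\cap\mathcal{W}$, the equivalence acts on morphisms by $\bar{\gamma}([f'])=\gamma(f')$.

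Next I would fix $X,Y\in\mathcal{C}\cap\mathcal{F}$ and an arbitrary morphism $f\colon X\to Y$ in $\mathrm{Ho}(\mathcal{A})$. An equivalence of categories is in particular full, so $\bar{\gamma}$ induces a surjection of hom-sets $(\mathcal{C}\cap\mathcal{F})/(\mathcal{C}\cap\mathcal{F}\cap\mathcal{W})(X,Y)\longrightarrow\mathrm{Ho}(\mathcal{A})(X,Y)$. Hence there is a class $[f']$ mapping to $f$; choosing its representative $f'\in\mathcal{A}(X,Y)$ and applying the description of $\bar{\gamma}$ recorded above yields $\gamma(f')=f$, which is precisely the desired conclusion.

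I do not expect any genuine obstacle: the whole argument is the fullness of the equivalence in Theorem \ref{QuillenThm} combined with the elementary fact that, because $X$ and $Y$ already lie in $\mathcal{C}\cap\mathcal{F}$, every morphism between them in the quotient category lifts to a genuine morphism of $\mathcal{A}$. The only point to state carefully is this last lifting, since Theorem \ref{QuillenThm} is phrased in terms of the quotient category rather than $\mathcal{A}$ directly; once it is made explicit, the corollary follows in a single line.
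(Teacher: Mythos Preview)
Your proposal is correct and follows essentially the same approach as the paper: both invoke the equivalence of Theorem \ref{QuillenThm}, note that it is the identity on objects and sends $[f']\mapsto\gamma(f')$, and then use fullness (equivalently, the induced bijection on hom-sets) to lift $f$ to a morphism $f'$ in $\mathcal{A}$.
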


\begin{proof} \ By Theorem \ref{QuillenThm}, one has an equivalence of categories $(\mathcal{C}\cap\mathcal{F})/(\mathcal{C}\cap\mathcal{F}\cap\mathcal{W}) \longrightarrow \mathrm{Ho}(\mathcal{A})$, $[f]\mapsto \gamma(f)$, which preserves objects.
Then the assertion follows from the isomorphism $\mathrm{Hom}_{(\mathcal{C}\cap\mathcal{F})/(\mathcal{C}\cap\mathcal{F}\cap\mathcal{W})}(X,Y)\cong \mathrm{Hom}_{\mathrm{Ho}(\mathcal{A})}(X,Y)$, for $X,\ Y\in \mathcal{C}\cap\mathcal{F}$.
\end{proof}

Let $(\mathcal{C},\mathcal{F},\mathcal{W})$ be a Hovey triple in a weakly idempotent complete extriangulated category $(\mathcal{A},\mathbb{E}, \mathfrak{s})$.
Using the completeness of the cotorsion pairs $(\mathcal{C},\mathcal{F}\cap\mathcal{W})$ and  $(\mathcal{C}\cap\mathcal{W},\mathcal{F})$,  for any object $X\in \mathcal{A}$, one has the $\mathbb{E}$-triangles:
$$R_X \longrightarrow QX \overset{\rho_X}{\longrightarrow}X\dashrightarrow, \ \ \ \ \ \ QX \overset{\sigma_X}{\longrightarrow} RQX \longrightarrow Q_X\dashrightarrow$$ 

\noindent where $QX\in \mathcal{C}$, $R_X\in \mathcal{F}\cap \mathcal{W},\  RQX\in \mathcal{F},\  Q_X\in \mathcal{C}\cap \mathcal{W}$. Since $\mathcal{C}$ is closed under extensions, $RQX\in \mathcal{C}\cap \mathcal{F}$.
Note that $\rho_X\in \mathrm{TFib}$, $\sigma_X\in\mathrm{TCoFib}$. Thus one has an isomorphism in $\mathrm{Ho}(\mathcal{A})$:
$$\gamma(\sigma_X)\circ\gamma(\rho_X)^{-1}:X\longrightarrow RQX.$$

\noindent For each object $X\in \mathcal{A}$, one can choose two $\mathbb{E}$-triangles $R_X \longrightarrow QX \overset{\rho_X}{\longrightarrow}X\dashrightarrow$ and $QX \overset{\sigma_X}{\longrightarrow} RQX \longrightarrow Q_X\dashrightarrow$, and hence obtain a canonical isomorphism in $\mathrm{Ho}(\mathcal{A})$
$$\theta_X:=\gamma(\sigma_X)\circ\gamma(\rho_X)^{-1}:X\longrightarrow RQX.$$

The following observation is important in the proof of the main theorem.

\begin{prop}[{\cite[Section 5.3]{NP}}]\label{prop:canonical} \ For any morphism $f:X\longrightarrow Y$ in $\mathrm{Ho}(\mathcal{A})$, there exists a morphism $\tilde{f}:RQX\longrightarrow RQY$ in $\mathcal{A}$ such that the following square commutes in $\mathrm{Ho}(\mathcal{A})$
$$\xymatrix@R=0.4cm{X\ar[r]^f \ar[d]_-{\theta_X} & Y \ar[d]_-{\theta_Y} \\ RQX \ar[r]^{\gamma(\tilde{f})} & RQY,}$$
where $\theta_X$ are $\theta_Y$ are the canonical isomorphisms.
\end{prop}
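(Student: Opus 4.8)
The plan is to reduce the statement to the lifting property recorded in Corollary \ref{CcapF}. The key observation is that the target objects $RQX$ and $RQY$ both lie in $\mathcal{C}\cap\mathcal{F}$, so any morphism between them in $\mathrm{Ho}(\mathcal{A})$ is already the image under $\gamma$ of an honest morphism of $\mathcal{A}$.

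First I would conjugate $f$ by the canonical isomorphisms to produce the morphism that must be realized. Since $\rho_X\in\mathrm{TFib}$ and $\sigma_X\in\mathrm{TCoFib}$ are weak equivalences, $\gamma(\rho_X)$ and $\gamma(\sigma_X)$ become invertible in $\mathrm{Ho}(\mathcal{A})$, so $\theta_X=\gamma(\sigma_X)\circ\gamma(\rho_X)^{-1}$ and likewise $\theta_Y$ are genuine isomorphisms there. Setting $g:=\theta_Y\circ f\circ\theta_X^{-1}\colon RQX\longrightarrow RQY$ then defines a morphism in $\mathrm{Ho}(\mathcal{A})$ whose source and target lie in $\mathcal{C}\cap\mathcal{F}$.

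Next I would invoke Corollary \ref{CcapF}, which applies precisely because $RQX,RQY\in\mathcal{C}\cap\mathcal{F}$: it yields a morphism $\tilde{f}\colon RQX\longrightarrow RQY$ in $\mathcal{A}$ with $\gamma(\tilde{f})=g$. Unwinding the definition of $g$ gives $\gamma(\tilde{f})\circ\theta_X=\theta_Y\circ f$ in $\mathrm{Ho}(\mathcal{A})$, which is exactly the asserted commutativity of the square.

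There is essentially no obstacle once Corollary \ref{CcapF} is in place; the entire force of the argument is carried by the fullness part of the equivalence $(\mathcal{C}\cap\mathcal{F})/(\mathcal{C}\cap\mathcal{F}\cap\mathcal{W})\cong\mathrm{Ho}(\mathcal{A})$ of Theorem \ref{QuillenThm}. The only point requiring care is that $\tilde{f}$ is far from unique, being determined only up to the congruence defining the quotient category, so the statement asserts existence and not canonicity; in particular no naturality of $\tilde{f}$ in $f$ should be expected or claimed.
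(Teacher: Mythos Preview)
Your proposal is correct and follows essentially the same argument as the paper: form $\theta_Y\circ f\circ\theta_X^{-1}$ in $\mathrm{Ho}(\mathcal{A})$, observe that its source and target lie in $\mathcal{C}\cap\mathcal{F}$, and apply Corollary~\ref{CcapF} to lift it to a morphism $\tilde{f}$ in $\mathcal{A}$. Your additional remark about non-uniqueness of $\tilde{f}$ is accurate and a useful caveat, though the paper does not make it explicit.
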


In fact, consider the morphism $\theta_Y\circ f\circ \theta_X^{-1}:RQX\longrightarrow RQY$ in $\mathrm{Ho}(\mathcal{A})$. By Corollary \ref{CcapF}, there exists a morphism $\tilde{f}:RQX\longrightarrow RQY$ in $\mathcal{A}$ such that $\theta_Y\circ f\circ \theta_X^{-1}=\gamma(\tilde{f}).$

\section{\bf The homotopy category of an admissible model structure}

We will give an alternative proof of Nakaoka - Palu Theorem, which claims that the homotopy category of an admissible model structure on a weakly idempotent complete extriangulated category is triangulated.

\subsection{Two facts on admissible model structures} \ Let $(\mathrm{CoFib},\ \mathrm{Fib},\ \mathrm{Weq})$ be an admissible model structure on a weakly idempotent complete extriangulated category $(\mathcal{A},\mathbb{E}, \mathfrak{s})$, and $(\mathcal{C},\mathcal{F},\mathcal{W})$ the corresponding Hovey triple.

\vskip5pt

Using Proposition \ref{prop: pullback 1}, Proposition \ref{prop: pushout version 1}, and the definitions of admissible model structures and Hovey triples, one obtains the following lemma.

\begin{lem}[{\cite[5.12]{NP}}]\label{lem: weq} \ $(1)
	$ \  Let $f$ be an $\mathbb{E}$-$\mathrm{inflation}$. Then $f$ is a weak equivalence if and only if $\mathrm{Cone}(f)\in \mathcal{W}$.

\vskip5pt

$(2)$ \ Let $g$ be an $\mathbb{E}$-$\mathrm{deflation}$. Then $g$ is a weak equivalence if and only if $\mathrm{CoCone}(g)\in \mathcal{W}$.

\end{lem}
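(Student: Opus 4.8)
The plan is to prove both directions of each equivalence by exploiting the defining properties of an admissible model structure together with the ``two out of three'' property of $\mathcal{W}$ for $\mathbb{E}$-triangles (which is part of the Hovey triple structure). I will prove statement $(1)$ in detail; statement $(2)$ then follows by a dual argument, so I would only indicate the dualization rather than repeat the computation.

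For statement $(1)$, let $f\colon X\to Y$ be an $\mathbb{E}$-inflation, sitting in an $\mathbb{E}$-triangle $X\overset{f}{\longrightarrow}Y\overset{g}{\longrightarrow}\mathrm{Cone}(f)\dashrightarrow$. The first key step is to factor $f$ using the completeness of the two cotorsion pairs. Concretely, I would use the completeness of $(\mathcal{C}\cap\mathcal{W},\mathcal{F})$ (or of $(\mathcal{C},\mathcal{F}\cap\mathcal{W})$) to produce from $f$ a factorization $f=pi$ where $i$ is an $\mathbb{E}$-inflation with $\mathrm{Cone}(i)\in\mathcal{C}\cap\mathcal{W}$ and $p$ is an $\mathbb{E}$-deflation with $\mathrm{CoCone}(p)\in\mathcal{F}\cap\mathcal{W}$ — this is exactly a factorization realizing the definition of $\mathrm{Weq}$. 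The technical heart is then to relate $\mathrm{Cone}(f)$ to the cones/cocones of $i$ and $p$: applying Proposition~\ref{prop: pushout version 1} (or Proposition~\ref{prop: pushout homotopy}) to the two $\mathbb{E}$-triangles arising from $i$ and $p$ built over the common object, one obtains a commutative diagram of $\mathbb{E}$-triangles linking $\mathrm{Cone}(i)$, $\mathrm{Cone}(p)$ (or the relevant cocones) and $\mathrm{Cone}(f)$. Since $\mathrm{Cone}(i)\in\mathcal{C}\cap\mathcal{W}\subseteq\mathcal{W}$ and the cocone data for $p$ lie in $\mathcal{F}\cap\mathcal{W}\subseteq\mathcal{W}$, the ``two out of three'' property of $\mathcal{W}$ applied along this diagram forces $\mathrm{Cone}(f)\in\mathcal{W}$. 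This gives the ``$f$ is a weak equivalence $\Rightarrow\mathrm{Cone}(f)\in\mathcal{W}$'' direction.

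For the converse in $(1)$, suppose $\mathrm{Cone}(f)\in\mathcal{W}$. I would again factor $f=pi$ as above, now aiming to show that $i$ is a \emph{trivial} cofibration and $p$ a \emph{trivial} fibration, so that $f\in\mathrm{Weq}$ by definition. The argument runs in reverse: the same diagram of $\mathbb{E}$-triangles relating the three cones, now fed with the hypothesis $\mathrm{Cone}(f)\in\mathcal{W}$ together with the automatic memberships $\mathrm{Cone}(i)\in\mathcal{C}\cap\mathcal{W}$ and $\mathrm{CoCone}(p)\in\mathcal{F}\cap\mathcal{W}$, lets the two out of three property upgrade the remaining cone/cocone into $\mathcal{W}$; combined with its a priori membership in $\mathcal{C}$ (resp.\ $\mathcal{F}$), this places $p$ in $\mathrm{TFib}$ and identifies $f=pi$ as a composite of a trivial cofibration and a trivial fibration, hence a weak equivalence.

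\textbf{The main obstacle} I anticipate is bookkeeping the precise shape of the connecting diagram produced by the pushout/pullback propositions and verifying that the objects decorating it land in the correct corner of the Hovey triple, so that ``two out of three'' can be invoked legitimately (one must be careful that $\mathcal{W}$'s two-out-of-three property is only asserted for the three terms of a single $\mathbb{E}$-triangle, so each application has to be set up on an honest $\mathbb{E}$-triangle appearing in the diagram). Once statement $(1)$ is established, statement $(2)$ is obtained by passing to the opposite extriangulated category: an $\mathbb{E}$-deflation $g$ with cocone $\mathrm{CoCone}(g)$ is the $\mathbb{E}$-inflation-and-cone situation in $\mathcal{A}^{\mathrm{op}}$, and since $\mathcal{W}$, the cotorsion-pair completeness, and the two-out-of-three property are all self-dual under the interchange $(\mathcal{C},\mathcal{F})\leftrightarrow(\mathcal{F},\mathcal{C})$, the dual of the argument for $(1)$ yields $(2)$ verbatim.
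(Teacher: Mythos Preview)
Your overall strategy—factor $f$, build a diagram of $\mathbb{E}$-triangles linking the three relevant (co)cones, then invoke the two-out-of-three property of $\mathcal{W}$—is the right one and is exactly what the paper's one-line hint (``use Propositions~\ref{prop: pullback 1}, \ref{prop: pushout version 1} and the definitions'') points to. But two details in your write-up would not survive a careful check.

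First, in the forward direction you say you will ``use completeness of $(\mathcal{C}\cap\mathcal{W},\mathcal{F})$ to produce a factorization $f=pi$ with $\mathrm{Cone}(i)\in\mathcal{C}\cap\mathcal{W}$ and $\mathrm{CoCone}(p)\in\mathcal{F}\cap\mathcal{W}$.'' Completeness of a cotorsion pair gives approximation \emph{triangles for objects}, not factorizations of maps; and no single application of completeness simultaneously forces the cone into $\mathcal{C}\cap\mathcal{W}$ \emph{and} the cocone into $\mathcal{F}\cap\mathcal{W}$. In this direction the factorization is simply the \emph{definition} of $f\in\mathrm{Weq}$; there is nothing to construct. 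What you then need is a diagram lemma that, from $f=pi$, produces an honest $\mathbb{E}$-triangle containing $\mathrm{CoCone}(p)$, $\mathrm{Cone}(i)$ and $\mathrm{Cone}(f)$. Proposition~\ref{prop: pushout version 1} does not do this (its input is two triangles with a common \emph{first} term, whereas your triangles $X\to E\to C_i$ and $K_p\to E\to Y$ share only the middle term $E$); the right tool is Proposition~\ref{prop: pullback right comp}, which from the three triangles with $f=pi$ yields the column $K_p\to C_i\to \mathrm{Cone}(f)\dashrightarrow$, after which two-out-of-three in $\mathcal{W}$ finishes. Alternatively (and closer to the paper's hint), one can instead apply completeness to the object $\mathrm{Cone}(f)$ to get $F\to C\to \mathrm{Cone}(f)\dashrightarrow$ with $F\in\mathcal{F}\cap\mathcal{W}$, $C\in\mathcal{C}$, feed this together with $X\to Y\to \mathrm{Cone}(f)$ into Proposition~\ref{prop: pullback 1}, and read off a factorization $f=p''i''$ with $\mathrm{CoCone}(p'')=F$ and $\mathrm{Cone}(i'')=C$; then (CM1) forces $i''\in\mathrm{TCoFib}$, hence $C\in\mathcal{W}$, hence $\mathrm{Cone}(f)\in\mathcal{W}$.

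Second, your converse is circular as written. You say you ``again factor $f=pi$ as above'' and then invoke the ``automatic memberships $\mathrm{Cone}(i)\in\mathcal{C}\cap\mathcal{W}$ and $\mathrm{CoCone}(p)\in\mathcal{F}\cap\mathcal{W}$.'' But having \emph{both} of these is precisely the statement $f\in\mathrm{Weq}$, which is what you are trying to prove. Any factorization you can build a priori (via (CM4) or via completeness plus Proposition~\ref{prop: pullback 1}) gives you only one of the two for free—say $\mathrm{CoCone}(p)\in\mathcal{F}\cap\mathcal{W}$ and $\mathrm{Cone}(i)\in\mathcal{C}$—and it is the hypothesis $\mathrm{Cone}(f)\in\mathcal{W}$ together with two-out-of-three on the triangle $F\to C\to\mathrm{Cone}(f)$ that upgrades $C=\mathrm{Cone}(i)$ into $\mathcal{W}$. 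You do say this in the next clause, but the sentence as written asserts both memberships as automatic, which is false. Once this is corrected, the argument goes through, and the dualization to~(2) via Proposition~\ref{prop: pushout version 1} is as you describe.
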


The following facts are variations involving weak equivalence, of (ET3), (ET3)$^{\rm op}$, and (ET2), respectively. 

\begin{lem}\label{lem:weak equi completion} \ $(1)$ \ Suppose that the both rows in the commutative diagram below are $\mathbb{E}$-triangles$:$
$$\xymatrix@R=0.4cm{X\ar[r]^i \ar[d]_f & Y \ar[r]^p \ar[d]_g & Z \ar@{-->}[r]^{\delta}&\\
X'\ar[r]^{i'} & Y' \ar[r]^{p'} & Z' \ar@{-->}[r]^{\delta'} & .}$$
If  $f$ and $g$ are weak equivalences, then there exists a weak equivalence $h: Z\longrightarrow Z'$ such that $hp=p'g$ and $f_*\delta=h^*\delta'$.

\vskip5pt

$(2)$ \ Suppose that the both rows in the commutative diagram below are $\mathbb{E}$-triangles$:$
$$\xymatrix@R=0.4cm{X\ar[r]^i & Y \ar[r]^p \ar[d]_g & Z \ar@{-->}[r]^{\delta} \ar[d]_h&\\
X'\ar[r]^{i'} & Y' \ar[r]^{p'} & Z' \ar@{-->}[r]^{\delta'} & .}$$
If $g$ and $h$ are weak equivalences, then there exists a weak equivalence $f: X\longrightarrow X'$ such that $gi=i'f$ and $f_*\delta=h^*\delta'$.

\vskip5pt

$(3)$ \ Suppose that the both rows in the diagram below are $\mathbb{E}$-triangles and $f_*\delta=h^*\delta':$
$$\xymatrix@R=0.4cm{X\ar[r]^i \ar[d]_f & Y \ar[r]^p & Z \ar@{-->}[r]^{\delta} \ar[d]_h&\\
X'\ar[r]^{i'} & Y' \ar[r]^{p'} & Z' \ar@{-->}[r]^{\delta'} & .}$$
If $f$ and $h$ are weak equivalences, then there exists a weak equivalence $g: Y\longrightarrow Y'$ such that $gi=i'f$ and $p'g=hp$.
\end{lem}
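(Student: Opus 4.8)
The plan is to split each of the three statements into an \emph{existence} part and a \emph{weak-equivalence} part. In each case the existence of the third morphism together with the compatibility $f_*\delta=h^*\delta'$ is precisely one of the axioms: $(\mathrm{ET}3)$ for $(1)$, $(\mathrm{ET}3)^{\mathrm{op}}$ for $(2)$, and $(\mathrm{ET}2)$ (the defining property of the realization $\mathfrak s$) for $(3)$. So the only new content is that the third morphism can be chosen to be a weak equivalence, and I would deduce all three from a single stability statement, which I call $(\star)$: in a homotopy cartesian square
$$\xymatrix@R=0.35cm@C=0.7cm{A \ar[r]^u \ar[d]_s & B \ar[d]^t \\ A' \ar[r]^{u'} & B'}$$
the map $s$ is a weak equivalence if and only if $t$ is.

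\emph{Reduction to $(\star)$.} From the two $\mathbb{E}$-triangles and the map $f$ I would build one intermediate object. Put $\omega:=f_*\delta$ and realize it as $X'\overset{j}{\to}V\overset{q}{\to}Z\overset{\omega}{\dashrightarrow}$. As $\omega=f_*\delta$, Proposition \ref{prop: pushout mid complete homotopy square} yields $a\colon Y\to V$ with $ai=jf$, $qa=p$ and an $\mathbb{E}$-triangle $X\to Y\oplus X'\to V\dashrightarrow$ with maps $\binom{i}{-f}$ and $(a,j)$; in particular the square with verticals $f,a$ over $i,j$ is homotopy cartesian, so $(\star)$ makes $a$ a weak equivalence. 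For $(1)$: since $gi=i'f$, the map $(g,i')\colon Y\oplus X'\to Y'$ kills $\binom{i}{-f}$, hence factors as $(g,i')=b(a,j)$ for some $b\colon V\to Y'$ by the contravariant exact sequence of Theorem \ref{lem:homological fundamental}; thus $ba=g$, $bj=i'$, and $b$ is a weak equivalence by $(\mathrm{CM}1)$. Applying $(\mathrm{ET}3)$ to $bj=i'\cdot 1_{X'}$ gives $h$ with $hq=p'b$ and $h^*\delta'=\omega=f_*\delta$, so $hp=hqa=p'ba=p'g$; by Proposition \ref{prop: pullback mid complete homotopy square} the square with verticals $b,h$ is homotopy cartesian, and $(\star)$ makes $h$ a weak equivalence. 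For $(3)$, where $h$ is given in place of $g$: $(\mathrm{ET}2)$ applied to $1_{X'}$ and $h$ (using $\omega=h^*\delta'$) produces $b\colon V\to Y'$ with $bj=i'$, $p'b=hq$; the square with verticals $b,h$ is homotopy cartesian, so $b$ is a weak equivalence by $(\star)$, and $g:=ba$ satisfies $gi=i'f$, $p'g=hp$ and is a weak equivalence. Statement $(2)$ is dual, using the homotopy pullback of $\delta'$ along $h$ and $(\mathrm{ET}3)^{\mathrm{op}}$.

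\emph{Proof of $(\star)$.} Suppose first that $s$ is a trivial cofibration, i.e.\ an $\mathbb{E}$-inflation with $\mathrm{Cone}(s)\in\mathcal C\cap\mathcal W$. Applying Proposition \ref{prop: pushout homotopy} to the $\mathbb{E}$-triangles on $u$ and on $s$, comparing its homotopy-pushout $\mathbb{E}$-triangle with the one defining the square, and using Corollary \ref{cor:iso 2 of 3}, one identifies $t$ with the cobase change of $s$ and obtains an $\mathbb{E}$-triangle $B\overset{t}{\to}B'\to\mathrm{Cone}(s)\dashrightarrow$; hence $t$ is an $\mathbb{E}$-inflation with $\mathrm{Cone}(t)\cong\mathrm{Cone}(s)\in\mathcal W$, so $t$ is a weak equivalence by Lemma \ref{lem: weq}$(1)$. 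The trivial-fibration case is dual, giving $\mathrm{CoCone}(t)\cong\mathrm{CoCone}(s)\in\mathcal W$ and Lemma \ref{lem: weq}$(2)$. For a general weak equivalence $s$ I would factor $s=p_0i_0$ with $i_0$ a trivial cofibration and $p_0$ a trivial fibration (by $(\mathrm{CM}4)$ and $(\mathrm{CM}1)$), carry out the homotopy pushout of $u$ in the two stages $i_0$, then $p_0$, and invoke Proposition \ref{prop: composition of homotopy squares} to see that the composite is again the homotopy pushout of $u$ along $s$; the two resulting vertical maps are weak equivalences by the previous cases, and by uniqueness of the cone of $\binom{u}{s}$ (Corollary \ref{cor:iso 2 of 3}) their composite differs from $t$ by an isomorphism. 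This proves that $s$ a weak equivalence implies $t$ a weak equivalence; the converse is the same argument read through the homotopy-pullback structure of the square, with $\mathrm{Cone}$ replaced by $\mathrm{CoCone}$.

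\emph{Main obstacle.} The heart of the matter is $(\star)$ for a \emph{general} weak equivalence. The obstruction is structural: a weak equivalence need not be an $\mathbb{E}$-inflation or an $\mathbb{E}$-deflation, so Lemma \ref{lem: weq} does not apply and $\mathrm{Cone}(s)$ is not even defined. Circumventing this forces the factorization $s=p_0i_0$ and the two-stage homotopy pushout, after which the one genuinely delicate point is to match the two-stage cobase change with the originally given square; this is exactly where uniqueness of cones (Corollary \ref{cor:iso 2 of 3}) enters, and it is the only step that goes beyond directly quoting the recorded propositions.
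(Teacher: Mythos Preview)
Your overall strategy—reduce to the stability statement $(\star)$ and then apply it twice through the intermediate realization $V$ of $f_*\delta$—is essentially the same architecture as the paper's proof. Both arguments build that intermediate $\mathbb{E}$-triangle, connect it to $Y$ and to $Y'$, and then push the ``weak equivalence'' property across by factoring into $\mathrm{TCoFib}$ followed by $\mathrm{TFib}$. The difference is packaging: you isolate $(\star)$ as a lemma, while the paper carries out the same two-step construction inline, first with Proposition~\ref{prop: pushout homotopy} for the trivial-cofibration half and then with Proposition~\ref{prop: ET4 version homotopy} for the trivial-fibration half.

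There is, however, a genuine gap in your proof of $(\star)$. In the trivial-cofibration case you write ``applying Proposition~\ref{prop: pushout homotopy} to the $\mathbb{E}$-triangles on $u$ and on $s$''; but that proposition needs \emph{two} $\mathbb{E}$-inflations out of $A$, and nothing in a bare homotopy cartesian square forces $u$ to be an $\mathbb{E}$-inflation. More seriously, the two-stage argument for a general weak equivalence $s=p_0i_0$ does not close: after the cobase change along $i_0$ the new horizontal map $A_0\to B_0$ is an $\mathbb{E}$-inflation, so the ``dual'' trivial-fibration case (which, dualized honestly, requires the horizontals to be $\mathbb{E}$-\emph{deflations}) does not apply to the second stage. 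The paper sidesteps this entirely: rather than attempting a second homotopy pushout, it applies Proposition~\ref{prop: ET4 version homotopy} to the composable pair $D\to A\to B$, which directly produces a trivial fibration $q'$ on the middle column because $\mathrm{CoCone}(q')\cong\mathrm{CoCone}(q)\in\mathcal F\cap\mathcal W$. That $(\mathrm{ET4})$-style step is exactly what your sketch is missing. A smaller point: to conclude that the square with your \emph{given} $b$ is homotopy cartesian you should quote Proposition~\ref{prop: pullback right complete homotopy square} (which takes the middle map as input and outputs $h$ together with the homotopy cartesian property), not Proposition~\ref{prop: pullback mid complete homotopy square}, which only asserts the existence of \emph{some} middle map.
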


\begin{proof} \ \ (1) \  Since $f$ is a weak equivalence, one has $f=qj: X\overset{j}{\longrightarrow}A\overset{q}{\longrightarrow}X'$ with $j\in \mathrm{TCoFib}$ and \ $q\in \mathrm{TFib}$. By Proposition \ref{prop: pushout homotopy} there is a commutative diagram of $\mathbb{E}$-triangles:
$$\xymatrix@R=0.5cm{X \ar[r]^{i} \ar[d]_{j} & Y \ar@{.>}[d]^{j'} \ar[r]^{p} & Z \ar@{=}[d] \ar@{-->}[r]^{\delta} & \\
A \ar@{.>}[r]^{\tilde{i}} \ar[d] & B \ar@{.>}[r] \ar@{.>}[d] & Z \ar@{-->}[r]^{j_*\delta} &  \\
C \ar@{=}[r] \ar@{-->}[d] & C \ar@{-->}[d] & & \\ & & & }$$

\noindent such that the top-left square is homotopy cartesian.
Since $j\in \mathrm{TCoFib}$, one has $C\in \mathcal{C}\cap \mathcal{W}$, and hence $j'\in \mathrm{TCoFib}$. By Proposition \ref{prop: ET4 version homotopy} there is a commutative diagram of $\mathbb{E}$-triangles:
$$\xymatrix@R=0.5cm{D \ar[r] \ar@{=}[d] & A \ar[r]^{q} \ar[d]_{\tilde{i}} & X' \ar@{-->}[r] \ar@{.>}[d]^{\bar{i}} & {} \\
D \ar@{.>}[r] & B \ar@{.>}[r]^{q'} \ar[d] & E \ar@{-->}[r] \ar@{.>}[d]^{\bar{p}} & {} \\
& Z \ar@{=}[r] \ar@{-->}[d]_{j_*\delta}& Z \ar@{-->}[d]_{\varepsilon} & \\
& {}  & {} & }$$

\noindent such that $\varepsilon=q_*j_*\delta=f_*\delta$, and the top-right square is homotopy cartesian. Since $q\in \mathrm{TFib}$, one has $D\in \mathcal{F}\cap \mathcal{W}$, and hence $q'\in \mathrm{TFib}$. Put $g'=q'j'$. Then $g'\in \mathrm{Weq}$. By  Proposition \ref{prop: composition of homotopy squares} the square
$$\xymatrix@R=0.4cm{X\ar[r]^i\ar[d]_{f} & Y\ar[d]^{g'} \\ X'\ar[r]^{\bar{i}} & E }$$

\noindent is also homotopy cartesian. Thus $\xymatrix{X \ar[r]^-{\left(\begin{smallmatrix}f \\ i \end{smallmatrix}\right)} & X'\oplus Y \ar[r]^-{(\bar{i},-g')} & E \ar@{-->}[r] &}$ is an  $\mathbb{E}$-triangle. Applying $\Hom (-,Y')$ one gets an exact sequence
\[\xymatrix{\mathcal{A}(E,Y') \ar[rr]^-{\mathcal{A}((\bar{i},-g'),Y')} && \mathcal{A}(X'\oplus Y,Y') \ar[rr]^-{\mathcal{A}(\left(\begin{smallmatrix}f \\ i \end{smallmatrix}\right) ,Y')}  && \mathcal{A}(X,Y') }  \]

\noindent Since $(i',-g)\left(\begin{smallmatrix}f \\ i \end{smallmatrix}\right)=0$, there exists a morphism $g'':E \longrightarrow Y'$ such that $(i',-g)=g''\circ (\bar{i},-g')$, i.e., $g''\bar{i}=i'$ and $g''g'=g$. Thus $g''\in \mathrm{Weq}$, by Two out of three axiom.

\vskip5pt

Thus one has a factorization $g''=q''j'':E\overset{j''}{\longrightarrow}F\overset{q''}{\longrightarrow}Y'$ with $j''\in \mathrm{TCoFib}$ and $q''\in \mathrm{TFib}$. By (ET4) there is a commutative diagram of $\mathbb{E}$-triangles:
$$\xymatrix@R=0.5cm{X' \ar[r]^{\bar{i}} \ar@{=}[d] & E \ar[r]^{\bar{p}} \ar[d]_{j''} & Z \ar@{-->}[r]^{f_*\delta} \ar@{.>}[d]^{j'''} & {} \\
X' \ar@{.>}[r] & F \ar@{.>}[r] \ar[d] & H \ar@{-->}[r]^{\zeta} \ar@{.>}[d] & {} \\
& G \ar@{=}[r] \ar@{-->}[d] & G \ar@{-->}[d] & \\
& {}  & {}  & }$$
such that $(j''')^*\zeta=f_*\delta$. Since $j''\in\mathrm{TCoFib}$, one has $G\in \mathcal C\cap \mathcal W$, and hence $j'''\in\mathrm{TCoFib}$.
By Proposition \ref{prop: pullback right comp} there is a commutative diagram of $\mathbb{E}$-triangles:
$$\xymatrix@R=0.5cm{& K \ar@{=}[r] \ar[d] & K \ar@{.>}[d] & \\
X' \ar[r] \ar@{=}[d] & F \ar[r] \ar[d]_{q''} & H \ar@{-->}[r]^{\zeta} \ar@{.>}[d]^{q'''} & {} \\
X' \ar[r]^{i'} & Y' \ar[r]^{p'} \ar@{-->}[d] & Z' \ar@{-->}[r]^{\delta'} \ar@{-->}[d] & {} \\
& {} & {} & }$$

\noindent such that $(q''')^*\delta'=\zeta$. Since $q''\in\mathrm{TFib}$, one has $K\in \mathcal F\cap \mathcal W$, and hence $q'''\in\mathrm{TFib}$. Put $h=q'''j'''$. Then $h\in \mathrm{Weq}$ and one has the following commutative diagram of $\mathbb{E}$-triangles
$$\xymatrix@R=0.5cm{
X \ar[r]^{i} \ar[d]_{f} & Y \ar[r]^{p} \ar[d]_{g'} & Z \ar@{-->}[r]^{\delta} \ar@{=}[d] & {} \\
X' \ar[r]^{\bar{i}} \ar@{=}[d] & E \ar[r]^{\bar{p}} \ar[d]_{g''} & Z \ar@{-->}[r]^{f_*\delta} \ar[d]^{h} & {} \\
X' \ar[r]^{i'} & Y' \ar[r]^{p'} & Z' \ar@{-->}[r]^{\delta'} & {}}$$
where $h^*\delta'=(j''')^*(q''')^*\delta'=(j''')^*\zeta=f_*\delta$, and $hp=p'g''g'=p'g$. This completes the proof of (1).

\vskip5pt

The assertion (2) is the dual of (1), and the proof of (3) is analogous to that of (1).
\end{proof}

\subsection{Suspension and loop functors} \ Let $(\mathrm{CoFib}, \ \mathrm{Fib}, \ \mathrm{Weq})$ be an admissible model structure on a weakly idempotent complete extriangulated category $(\mathcal{A},\mathbb{E}, \mathfrak{s})$,
$\mathrm{Ho}(\mathcal{A})$ its homotopy category, and $(\mathcal{C},\mathcal{F},\mathcal{W})$ the corresponding Hovey triple.
Nakaoka and Palu \cite{NP} have defined the suspension functor $\Sigma: \mathrm{Ho}(\mathcal{A})\longrightarrow \mathrm{Ho}(\mathcal{A})$ and the loop functor $\Omega:\mathrm{Ho}(\mathcal{A})\longrightarrow\mathrm{Ho}(\mathcal{A})$, and proved that $\Sigma$ and $\Omega$ are quasi-inverse of each other. We will provide an alternative proof of this result.

\vskip5pt

By the completeness of the cotorsion pair $(\mathcal{C},\mathcal{F}\cap\mathcal{W})$, for any object $X\in \mathcal{A}$, there is an $\mathbb{E}$-triangle $X \overset{i_X}{\longrightarrow} M \overset{p_X}{\longrightarrow} \Sigma X \overset{\delta_X}{\dashrightarrow}$
with $M \in \mathcal{F}\cap\mathcal{W}$ and $\Sigma X \in \mathcal{C}$, called a {\it suspension sequence} of $X$. Also, by the completeness of the cotorsion pair $(\mathcal{C}\cap\mathcal{W},\mathcal{F})$, for any object $X$ in $\mathcal{A}$, there is an $\mathbb{E}$-triangle  $\Omega X \overset{j_X}{\longrightarrow} K \overset{q_X}{\longrightarrow} X \overset{\varepsilon_X}{\dashrightarrow}$
with $K \in \mathcal{C}\cap\mathcal{W}$ and $\Omega X \in \mathcal{F}$, called a {\it loop sequence} of $X$.

\vskip5pt
For each object $X\in \mathcal{A}$, fix a suspension sequence $X \overset{i_X}{\longrightarrow} M \overset{p_X}{\longrightarrow} \Sigma X \overset{\delta_X}{\dashrightarrow}$. For any morphism $f: X \longrightarrow Y$ in $\mathcal{A}$, applying
$\mathcal{A}(-, N)$ one gets a morphism $k: M\longrightarrow N$. Then by (ET3), there exists a morphism $g: \Sigma X \longrightarrow \Sigma Y$ in $\mathcal{A}$ such that the following diagram commutes:
$$\xymatrix@R=0.7cm{X \ar[r]^{i_X} \ar[d]_{f} & M \ar[r]^{p_X} \ar@{.>}[d]_{k} & \Sigma X \ar@{-->}[r]^{\delta_X} \ar@{.>}[d]^{g} & {} \\
Y \ar[r]^{i_Y} & N \ar[r]^{p_Y} & \Sigma X \ar@{-->}[r]^{\delta_Y} & {}}$$ and $f_*\delta_X =g^*\delta_Y$. Define $\Sigma f := \gamma(g)\in \mathrm{Ho}(\mathcal{A})$.
Then $\Sigma: \mathcal{A} \longrightarrow \mathrm{Ho}(\mathcal{A})$ is a well-defined functor (\cite[6.2]{NP}) and $\Sigma$ is additive. A representative $g$ of $\Sigma f$ in $\mathcal{A}$ is also denoted by $\Sigma f$.
\vskip5pt

Since $M, N \in \mathcal{W}$, any morphism $k: M \longrightarrow N$ is a weak equivalence. If $f$ is a weak equivalence, then the above $g$ can be chosen to be a weak equivalence $g$ such that $\Sigma f=\gamma(g)$, by Lemma \ref{lem:weak equi completion}. Thus, $\Sigma$ sends weak equivalences in $\mathcal{A}$ to isomorphisms in $\mathrm{Ho}(\mathcal{A})$. By the universal property of the localization functor, $\Sigma: \mathcal{A} \longrightarrow \mathrm{Ho}(\mathcal{A})$ induces a functor $\bar\Sigma: \mathrm{Ho}(\mathcal{A}) \longrightarrow \mathrm{Ho}(\mathcal{A})$ such that $\Sigma = \bar\Sigma \circ \gamma$.

\vskip5pt

For objects $X, Y$ in $\mathcal A$, let $(X\oplus Y,\ e^X,\ e^Y)$ be the coproduct of $X$ and $Y$ in $\mathcal A$. Since the localization functor $\gamma:\mathcal A \longrightarrow \mathrm{Ho}(\mathcal A)$ is additive (c.f. Lemma \ref{addlocal}), $(X\oplus Y,\ \gamma(e^X),\ \gamma(e^Y))$ is the coproduct of $ X$ and $Y$ in $\mathrm{Ho}(\mathcal A)$.
Since  $\Sigma:\mathcal A \longrightarrow \mathrm{Ho}(\mathcal A)$ is an additive functor, $(\Sigma(X\oplus Y),\ \Sigma e^X,\ \Sigma e^Y)$ is the coproduct of $\Sigma X$ and $\Sigma Y$ in $\mathrm{Ho}(\mathcal A)$.
Thus, the functor $\bar{\Sigma}:\mathrm{Ho}(\mathcal A)\longrightarrow\mathrm{Ho}(\mathcal A)$ preserves coproducts in $\mathrm{Ho}(\mathcal A)$, and hence $\bar{\Sigma}$ is  additive, which  is called the {\it suspension functor}, and will be simply denoted by $\Sigma$.

\vskip5pt

Dually, for any object $X\in \mathcal{A}$, fix a loop sequence $\Omega X \overset{j_X}{\longrightarrow} K \overset{q_X}{\longrightarrow} X \overset{\varepsilon_X}{\dashrightarrow}$.
For any morphism $f: X \longrightarrow Y$ in $\mathcal{A}$, there exists a morphism $h: \Omega X \longrightarrow \Omega Y$ in $\mathcal{A}$ such that $h_* \varepsilon_X = f^* \varepsilon_Y$. Define $\Omega f := \gamma(h) \in \mathrm{Ho}(\mathcal{A})$. Then $\Omega: \mathcal{A} \longrightarrow \mathrm{Ho}(\mathcal{A})$ is an additive functor. A representative $h$ of $\Omega f$ in $\mathcal{A}$ will be also denoted by $\Omega f$.
Similarly, $\Omega$ induces an additive functor $\bar\Omega: \mathrm{Ho}(\mathcal{A}) \longrightarrow \mathrm{Ho}(\mathcal{A})$ such that $\Omega = \bar\Omega \circ \gamma$, which is called the {\it loop functor}, and will be simply denoted by $\Omega$.

\begin{prop}[{\rm \cite[Proposition 6.14]{NP}}]  \label{shift} \  Let $\Sigma, \Omega: \mathrm{Ho}(\mathcal{A}) \longrightarrow \mathrm{Ho}(\mathcal{A})$ be the suspension functor and the loop functor, respectively.
Then  $\mathrm{Id} \cong \Sigma\Omega $ and $ \mathrm{Id} \cong \Omega\Sigma$.
\end{prop}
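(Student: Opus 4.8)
The plan is to construct, for each object $X$, an isomorphism $\theta_X\colon X\to \Sigma\Omega X$ in $\mathrm{Ho}(\mathcal A)$ and then to verify that the family $(\theta_X)$ is natural; the isomorphism $\mathrm{Id}\cong\Omega\Sigma$ will be obtained by the dual construction. For the object part, I would fix a loop sequence $\Omega X\overset{j_X}{\to}K\overset{q_X}{\to}X\overset{\varepsilon_X}{\dashrightarrow}$ with $K\in\mathcal C\cap\mathcal W$ and a suspension sequence $\Omega X\overset{i_{\Omega X}}{\to}M\overset{p_{\Omega X}}{\to}\Sigma\Omega X\overset{\delta_{\Omega X}}{\dashrightarrow}$ with $M\in\mathcal F\cap\mathcal W$ of the common object $\Omega X$, and apply the pushout Proposition \ref{prop: pushout version 1} to these two $\mathbb E$-triangles. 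This produces an object $W$ together with $\mathbb E$-triangles $M\overset{d_1}{\to}W\overset{e_1}{\to}X\dashrightarrow$ and $K\overset{d_2}{\to}W\overset{e_2}{\to}\Sigma\Omega X\dashrightarrow$. Since $\mathrm{CoCone}(e_1)=M\in\mathcal W$ and $\mathrm{CoCone}(e_2)=K\in\mathcal W$, Lemma \ref{lem: weq}(2) shows that both deflations $e_1$ and $e_2$ are weak equivalences, so $\gamma(e_1)$ and $\gamma(e_2)$ are isomorphisms in $\mathrm{Ho}(\mathcal A)$. I then set $\theta_X:=\gamma(e_2)\circ\gamma(e_1)^{-1}\colon X\to\Sigma\Omega X$, an isomorphism. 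Dually, feeding the suspension sequence of $X$ and a loop sequence of $\Sigma X$ (both ending in $\Sigma X$) into the pullback Proposition \ref{prop: pullback 1} and invoking Lemma \ref{lem: weq}(1) yields $X\cong\Omega\Sigma X$.

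For naturality it suffices to treat morphisms of the form $\gamma(f)$ with $f\colon X\to Y$ in $\mathcal A$, since every morphism of $\mathrm{Ho}(\mathcal A)$ is a composite of such morphisms and of formal inverses of $\gamma(s)$ for weak equivalences $s$, and naturality with respect to $\gamma(s)$ automatically gives naturality with respect to $\gamma(s)^{-1}$ once the $\theta$'s are invertible. Given $f\colon X\to Y$ in $\mathcal A$, the defining diagrams of $\Omega f$ and of $\Sigma(\Omega f)$ supply commuting comparisons of the chosen loop sequences (over $f$) and of the chosen suspension sequences (over $\Omega f$); in particular one obtains $\mathcal A$-morphisms $\kappa\colon K_X\to K_Y$ and $\mu\colon M_X\to M_Y$ filling the relevant squares. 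The enhanced pushout Proposition \ref{prop: pushout homotopy} presents $W_X$ as a cone of the inflation $\bigl(\begin{smallmatrix}j_X\\ i_{\Omega X}\end{smallmatrix}\bigr)$ and likewise for $W_Y$; since $(\kappa\oplus\mu)\bigl(\begin{smallmatrix}j_X\\ i_{\Omega X}\end{smallmatrix}\bigr)=\bigl(\begin{smallmatrix}j_Y\\ i_{\Omega Y}\end{smallmatrix}\bigr)\Omega f$, axiom (ET3) induces a morphism $w\colon W_X\to W_Y$ compatible with $d_1$ and $d_2$. I would then check that the maps $w$ induces on the cones $X$ and $\Sigma\Omega X$ are exactly $f$ and a representative of $\Sigma\Omega f$, so that $e_1^Y w=f\,e_1^X$ and $e_2^Y w=\Sigma\Omega f\cdot e_2^X$. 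Applying $\gamma$ and using that $\gamma(e_1^X),\gamma(e_1^Y)$ are invertible yields $\theta_Y\circ\gamma(f)=\Sigma\Omega(\gamma(f))\circ\theta_X$, as required.

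The main obstacle is exactly this naturality step: producing the comparison $w$ in $\mathcal A$ and showing that the cone maps it induces coincide on the nose with $f$ and $\Sigma\Omega f$, rather than with some other lifts. The difficulty is that $\Sigma\Omega f$ is only well defined in $\mathrm{Ho}(\mathcal A)$ and depends on the choices made in constructing $\Sigma$ and $\Omega$, so the verification must be carried out modulo these choices. I expect to control it with Lemma \ref{lem:weak equi completion}, which completes a commuting diagram of $\mathbb E$-triangles whose two outer vertical maps are weak equivalences by a third weak equivalence respecting the $\mathbb E$-extensions; applied to the triangles $M\to W\to X$ and $K\to W\to\Sigma\Omega X$ it pins down the induced maps and forces the two squares above to commute in $\mathrm{Ho}(\mathcal A)$. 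Once the $\mathrm{Id}\cong\Sigma\Omega$ case is settled, the $\mathrm{Id}\cong\Omega\Sigma$ case follows by the evident dualization, exchanging the roles of inflations and deflations, of $\mathcal C\cap\mathcal W$ and $\mathcal F\cap\mathcal W$, and of Propositions \ref{prop: pushout version 1} and \ref{prop: pullback 1}.
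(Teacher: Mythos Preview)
Your object-level construction is correct and is precisely the dual of what the paper does: the paper builds $\eta_X\colon X\to\Omega\Sigma X$ from the pullback Proposition~\ref{prop: pullback 1} applied to the suspension sequence of $X$ and the loop sequence of $\Sigma X$, while you build $\theta_X\colon X\to\Sigma\Omega X$ from the pushout Proposition~\ref{prop: pushout version 1}. Your reduction of naturality to morphisms $\gamma(f)$ via zigzags in the localization is also fine; the paper instead invokes Proposition~\ref{prop:canonical} to pass through bifibrant replacements, but both reductions work.

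The gap is in the naturality step itself. Lemma~\ref{lem:weak equi completion} will not help you: its hypothesis is that \emph{two} of the three vertical maps are weak equivalences, and for a general $f\colon X\to Y$ neither $f$ nor the induced $w\colon W_X\to W_Y$ has any reason to be one. Moreover, the $w$ you produce from (ET3) on the homotopy cartesian triangle $\Omega X\to K_X\oplus M_X\to W_X$ is only guaranteed to be compatible with the $d_i$'s; it does not automatically induce $f$ and $\Sigma\Omega f$ on the cones, and Lemma~\ref{lem:weak equi completion} cannot force this.

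The fix, dual to what the paper actually does, is to obtain $w$ not from the homotopy cartesian triangle but from (ET2) applied to the side triangle $M_X\overset{d_1^X}{\to}W_X\overset{e_1^X}{\to}X\overset{(i_{\Omega X})_*\varepsilon_X}{\dashrightarrow}$: the equality $\mu_*(i_{\Omega X})_*\varepsilon_X=f^*(i_{\Omega Y})_*\varepsilon_Y$ (which follows from $(\Omega f)_*\varepsilon_X=f^*\varepsilon_Y$ and $\mu\,i_{\Omega X}=i_{\Omega Y}\,\Omega f$) yields $w$ with $e_1^Y w=f\,e_1^X$ on the nose. For the other square, compute
\[
\bigl(e_2^Y w-(\Sigma\Omega f)\,e_2^X\bigr)^*\delta_{\Omega Y}
= w^*(e_2^Y)^*\delta_{\Omega Y}-(e_2^X)^*(\Omega f)_*\delta_{\Omega X}
\]
and use the key identity $(e_1)^*\varepsilon+(e_2)^*\delta=0$ from Proposition~\ref{prop: pushout version 1} on both $W_X$ and $W_Y$, together with $e_1^Y w=f\,e_1^X$, to see that this vanishes. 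Then Corollary~\ref{cor: factorization} forces $e_2^Y w-(\Sigma\Omega f)\,e_2^X$ to factor through $M_Y\in\mathcal F\cap\mathcal W$, so it is zero in $\mathrm{Ho}(\mathcal A)$. This is exactly the mechanism the paper uses (there the difference factors through $K\in\mathcal C\cap\mathcal W$).
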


\begin{proof} \ Here we provide an alternative proof. The definition of the natural isomorphism $\eta:{\rm Id}\longrightarrow \Sigma \Omega$ is the same as \cite{NP}.
For object $X \in \mathcal{A}$, one has the suspension sequence $X \overset{i_X}{\longrightarrow} M \overset{p_X}{\longrightarrow} \Sigma X \overset{\delta_X}{\dashrightarrow}$ and the loop sequence $\Omega\Sigma X \overset{j_X}{\longrightarrow} K \overset{q_X}{\longrightarrow} \Sigma X \overset{\varepsilon_X}{\dashrightarrow}$.
By Proposition \ref{prop: pullback 1} there is a commutative diagram of $\mathbb{E}$-triangles:
$$\xymatrix@R=0.8cm{& \Omega\Sigma X \ar@{=}[r] \ar@{.>}[d]_{l_X} & \Omega\Sigma X \ar[d]^{j_X} & \\
X \ar@{.>}[r]^{k_X} \ar@{=}[d] & P \ar@{.>}[r]^{r_X} \ar@{.>}[d]_{t_X} & K \ar@{-->}[r]^{\lambda_X} \ar[d]^{q_X} & {} \\
X \ar[r]^{i_X} & M \ar[r]^{p_X} \ar@{-->}[d]_{\mu_X} & \Sigma X \ar@{-->}[r]^{\delta_X} \ar@{-->}[d]^{\varepsilon_X} & {} \\
& {}  & {}  & }$$
such that $(q_X)^*\delta_X=\lambda_X, \ \ (p_X)^*\varepsilon_X=\mu_X, \ \ (k_X)_*\delta_X+(l_X)_*\varepsilon_X=0.$ Since $K \in \mathcal{C} \cap \mathcal{W}$, one has $k_X \in \mathrm{TCoFib}$. Since $l_X$ is an $\mathbb{E}$-inflation and $\mathrm{Cone}(l_X) \in \mathcal{W}$, it follows from Lemma \ref{lem: weq} that $l_X$ is a weak equivalence, and hence $\eta_{_X} := \gamma(l_X)^{-1} \gamma(k_X) : X \longrightarrow \Omega\Sigma X$ is an isomorphism in $\mathrm{Ho}(\mathcal{A})$.

\vskip5pt

It remains to show that $\eta_{_X}: X\longrightarrow \Omega \Sigma X$ is functorial in $X$. Nakaoka and Palu \cite{NP} have proved this, via the properties of ``connecting morphisms'' they introduced.
Here we give a direct proof. For this, we first show that, for any morphism $f: X \longrightarrow Y$ in $\mathcal{A}$, the following square commutes in $\mathrm{Ho}(\mathcal{A})$:
\begin{equation}\label{eq:commu_narural2}
\xymatrix@R=0.6cm{X \ar[r]^{\eta_{_X}} \ar[d]_{\gamma(f)} & \Omega \Sigma X \ar[d]^{\Omega \Sigma \gamma(f)} \\ Y \ar[r]^{\eta_{_Y}} & \Omega \Sigma Y.}
\end{equation}

\noindent By Proposition \ref{prop: pullback 1} and the definitions of $\Sigma$ and $\Omega$, there is a commutative diagram of $\mathbb{E}$-triangles in $\mathcal{A}$:
\[\xymatrix@R=0.5cm@C=0.7cm{& & \Omega\Sigma X \ar@{=}[rr] \ar[dd]_(0.6){l_X} \ar[dr]^{\Omega\Sigma f} & & \Omega\Sigma X \ar[dr]^{\Omega\Sigma f} \ar[dd]_(0.3){j_X} & & \\
& & & \Omega\Sigma Y \ar@{=}[rr] \ar[dd]^(0.6){l_Y} & & \Omega\Sigma Y \ar[dd]^(0.4){j_Y} & \\
X \ar[rr]^(0.4){k_X} \ar@{=}[dd] \ar[dr]^{f} & & P \ar[rr]^(0.3){r_X} \ar[dd] & & K \ar@{-->}[r]^{\lambda_X} \ar[dr]^{\bar{f}} \ar[dd]_(0.3){q_X} & {} & \\
& Y \ar[rr]^(0.3){k_Y} \ar@{=}[dd] & & Q \ar[rr]^(0.3){r_Y} \ar[dd] & & L \ar@{-->}[r]^{\lambda_Y} \ar[dd]^{q_Y} & {} \\
X \ar[rr]^(0.4){i_X} \ar[dr]^{f} & & M \ar[rr]^(0.4){p_X} \ar@{-->}[dd]_>>{\mu_X} \ar[dr]^{\tilde{f}} & & \Sigma X \ar@{-->}[r]^{\delta_X} \ar@{-->}[dd]^>>{\varepsilon_X} \ar[dr]^{\Sigma f} & {} & \\
& Y \ar[rr]^(0.3){i_Y} & & N \ar[rr]^(0.4){p_Y} \ar@{-->}[dd]_>>{\mu_Y} & & \Sigma Y \ar@{-->}[r]^{\delta_Y} \ar@{-->}[dd]^>>{\varepsilon_Y} & {} \\
& & & {}  & & {}  & \\ & & & {} & & {} & }
\]
such that
$$\begin{matrix}
& f_*\delta_X=(\Sigma f)^*\delta_Y,  \ \ & (q_X)^*\delta_X =\lambda_X, \ \ &(p_X)^*\varepsilon_X=\mu_X,  \ \ & (k_X)_*\delta_X+(l_X)_*\varepsilon_X =0, \\
& (\Sigma f)^* \varepsilon_Y=(\Omega \Sigma f)_*\varepsilon_X, \ \ &  (q_Y)^*\delta_Y  =\lambda_Y,  \ \ & (p_Y)^*\varepsilon_Y  =\mu_Y, \ \ & (k_Y)_*\delta_Y+(l_Y)_*\varepsilon_Y =0.
\end{matrix}$$
Since
\begin{align*}
f_*\lambda_X &=f_*(q_X)^*\delta_X=(q_X)^*f_*\delta_X\\
&=(q_X)^*(\Sigma f)^*\delta_Y=((\Sigma f )q_X)^*\delta_Y\\
&=(q_Y \bar{f})^*\delta_Y=\bar{f}^*(q_Y)^*\delta_Y\\
&=\bar{f}^*\lambda_Y,
\end{align*}

\noindent it follows from (ET2) that there is a morphism $s:P \longrightarrow Q$ such that $sk_X=k_Y f$ and $r_Ys=\bar{f}r_X$. Since
$$\begin{aligned}
(sl_X-l_Y(\Omega\Sigma f))_*\varepsilon_X &=s_*(l_X)_*\varepsilon_X-(l_Y)_*(\Omega \Sigma f)_*\varepsilon_X=-s_*(k_X)_*\delta_X-(l_Y)_*(\Sigma f)^* \varepsilon_Y\\ &=-(sk_X)_*\delta_X-(\Sigma f)^*(l_Y)_*\varepsilon_Y=-(sk_X)_*\delta_X+(\Sigma f)^*(k_Y)_*\delta_Y\\ &=-(sk_X)_*\delta_X+(k_Y)_*(\Sigma f)^*\delta_Y=-(sk_X)_*\delta_X+(k_Y)_*f_*\delta_X\\ &=(-sk_X+k_Yf)_*\delta_X=0_*\delta_X=0,
\end{aligned}$$
it follows from Corollary \ref{cor: factorization} that $sl_X - l_Y (\Omega \Sigma f)$ factors through $K\in \mathcal{C}\cap \mathcal{W}$, and hence $\gamma(sl_X)=\gamma(l_Y (\Omega \Sigma f))$ in $\mathrm{Ho}(\mathcal{A})$.
Thus one has commutative diagram in $\mathrm{Ho}(\mathcal{A})$:
$$\xymatrix@R=0.6cm{X \ar[r]^{\gamma(k_X)} \ar[d]_{\gamma(f)} & P \ar[d]^{\gamma(s)} & \Omega\Sigma X \ar[d]^{\Omega\Sigma\gamma(f)} \ar[l]_{\gamma(l_X)} \\
Y \ar[r]_{\gamma(k_Y)} & Q & \Omega\Sigma Y. \ar[l]^{\gamma(l_Y)}}$$

\noindent From this one sees that square \eqref{eq:commu_narural2} commutes in $\mathrm{Ho}(\mathcal{A})$. In particular, if $f: X \longrightarrow Y$ is a weak equivalence, then the following square also commutes in $\mathrm{Ho}(\mathcal{A})$:
$$\xymatrix@R=0.6cm{Y\ar[r]^{\eta_{_Y}} \ar[d]_{\gamma(f)^{-1}} & \Omega\Sigma Y \ar[d]^{\Omega\Sigma\gamma(f)^{-1}} \\
X \ar[r]^-{\eta_{_X}} & \Omega\Sigma X.}$$

For any morphism $g: X \longrightarrow Y$ in $\mathrm{Ho}(\mathcal{A})$, by Proposition \ref{prop:canonical} there exists a morphism $f$ in $\mathcal{A}$ such that the following square commutes in $\mathrm{Ho}(\mathcal{A})$:
$$\xymatrix@R=0.6cm{X \ar[r]^{g} \ar[d]_{\theta_X} & Y \ar[d]^{\theta_Y} \\ RQX \ar[r]^{\gamma(f)} & RQY}$$
where $\theta_X=\gamma(\sigma_X)\gamma(\rho_X)^{-1},\ \theta_Y=\gamma(\sigma_Y)\gamma(\rho_Y)^{-1}$. Thus $g=\gamma(\rho_Y)\gamma(\sigma_Y)^{-1}\gamma(f)\gamma(\sigma_X)\gamma(\rho_X)^{-1}$.
Combining the following commutative squares in $\mathrm{Ho}(\mathcal{A}):$
\[
\xymatrix@C=1.3cm{X \ar[r]^-{\gamma(\rho_X)^{-1}} \ar[d]_{\eta_{_{X}}}& QX \ar[r]^-{\gamma(\sigma_X)} \ar[d]_{\eta_{_{QX}}}& RQX \ar[r]^-{\gamma(f)} \ar[d]_{\eta_{_{RQX}}} & RQY \ar[r]^-{\gamma(\sigma_Y)^{-1}} \ar[d]_{\eta_{_{RQY}}} & QY \ar[r]^-{\gamma(\rho_Y)} \ar[d]_{\eta_{_{QY}}} & Y \ar[d]_{\eta_{_{Y}}}\\
\Omega\Sigma X \ar[r]^-{\Omega\Sigma\gamma(\rho_X)^{-1}} & \Omega\Sigma QX \ar[r]^-{\Omega\Sigma\gamma(\sigma_X)} & \Omega\Sigma RQX \ar[r]^-{\Omega\Sigma\gamma(f)} & \Omega\Sigma RQY \ar[r]^-{\Omega\Sigma \gamma(\sigma_Y)^{-1}} & \Omega\Sigma  QY \ar[r]^{\Omega\Sigma\gamma(\rho_Y)} & \Omega\Sigma Y}
\]
one has the commutative square:
$$\xymatrix@R=0.6cm{X \ar[r]^{g} \ar[d]_{\eta_{_X}}   & Y \ar[d]^{\eta_{_Y}} \\ \Omega \Sigma X  \ar[r]^{\Omega\Sigma g} & \Omega \Sigma Y.}$$ That is, $\eta=(\eta_{_X})$ is a natural isomorphism.

\vskip5pt

Similarly, there exists a natural isomorphism $\zeta: {\rm Id}\longrightarrow \Omega \Sigma$.
\end{proof}

\subsection{Triangulated structure on $\mathrm{Ho}(\mathcal{A})$} \ Let $(\mathrm{CoFib},\ \mathrm{Fib},\ \mathrm{Weq})$ be an admissible model structure on a weakly idempotent complete extriangulated category $(\mathcal{A}, \mathbb{E}, \mathfrak{s})$,  $(\mathcal{C},\ \mathcal{F},\ \mathcal{W})$ the corresponding Hovey triple, and $\Sigma$ the suspension functor. We will define the class $\Delta$ of distinguished triangles in $\mathrm{Ho}(\mathcal{A})$.

\begin{defn}\label{def: disinguishedtriangle} \ Let $f:X \longrightarrow Y$ be a morphism in $\mathcal{A} $, and $X\overset{i_X}{\longrightarrow}M\overset{p_X}{\longrightarrow}\Sigma X\overset{\delta_X}{\dashrightarrow}$ the suspension sequence of $X$. Then there is a commutative diagram of $ \mathbb{E} $-triangles in $ \mathcal{A} $:
$$\xymatrix@R=0.5cm{X \ar[r]^-{i_X} \ar[d]_{f} & M \ar[r]^-{p_X} \ar[d] & \Sigma X \ar@{-->}[r]^-{\delta_X} \ar@{=}[d] & {} \\
Y \ar[r]^{g} & C(f) \ar[r]^{h} & \Sigma X \ar@{-->}[r]^-{f_*\delta_X} & {}}$$

\noindent The triangle in $\mathrm{Ho}(\mathcal{A})$
\[
\xymatrix{X \ar[r]^-{\gamma(f)} & Y \ar[r]^-{\gamma(g)} & C(f) \ar[r]^-{\gamma(h)} & \Sigma X}
\]
is called the {\it standard triangle} induced by $f$. Denote $\Delta$ by the class of triangles isomorphic to standard triangles in $\mathrm{Ho}(\mathcal{A})$. 
\end{defn}

The distinguished triangles here are similar to the classical construction in \cite{Hap1, Hap2, Heller}, but differ from the one in \cite{NP}.
The standard triangle $X \overset{\gamma(f)}{\longrightarrow} Y \overset{\gamma(g)}{\longrightarrow} Z \overset{l(\delta)}{\longrightarrow} \Sigma X$ in $\mathrm{Ho}(\mathcal{A})$ in \cite{NP} is the image under $\gamma$ of an $\mathbb{E}$-triangle $X \longrightarrow Y \longrightarrow Z \overset{\delta}{\dashrightarrow}$ in $\mathcal{A}$ together with a ``connecting morphism'' $l(\delta): Z \longrightarrow \Sigma X$ in $\mathrm{Ho}(\mathcal{A})$.

\subsection{Main results}
\begin{thm}\label{thm: main} \ Let $ (\mathrm{CoFib},\mathrm{Fib},\mathrm{Weq}) $ be an admissible model structure on a weakly idempotent complete extriangulated category $ (\mathcal{A}, \mathbb{E},\mathfrak{s})$, 
$\Sigma: \mathrm{Ho}(\mathcal{A})\longrightarrow \mathrm{Ho}(\mathcal{A})$ the suspension functor, and $\Delta$ the class of triangles in Definition $\ref{def: disinguishedtriangle}$. 
Then $ (\mathrm{Ho}(\mathcal{A}), \Sigma, \Delta) $ is a triangulated category.
\end{thm}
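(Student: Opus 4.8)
The plan is to verify the four axioms of a triangulated category for the triple $(\mathrm{Ho}(\mathcal{A}), \Sigma, \Delta)$. Two ingredients are already in place: the shift $\Sigma$ is an auto-equivalence with quasi-inverse $\Omega$ by Proposition \ref{shift}, and $\Delta$ is closed under isomorphism by construction (Definition \ref{def: disinguishedtriangle}). The decisive simplification is that, by Theorem \ref{QuillenThm} and Corollary \ref{CcapF}, after transporting objects along the canonical isomorphisms $\theta_X\colon X\longrightarrow RQX$ of Proposition \ref{prop:canonical}, every morphism of $\mathrm{Ho}(\mathcal{A})$ is of the form $\gamma(f)$ for an honest morphism $f$ in $\mathcal{A}$ between cofibrant--fibrant objects. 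Hence it suffices to check each axiom for the standard triangles attached to morphisms of $\mathcal{A}$ and then translate the resulting identities back into $\mathrm{Ho}(\mathcal{A})$. Throughout, the workhorse (already exploited in the proof of Proposition \ref{shift}) is that objects of $\mathcal{W}$ become zero in $\mathrm{Ho}(\mathcal{A})$, so a morphism of $\mathcal{A}$ that factors through an object of $\mathcal{C}\cap\mathcal{W}$ or $\mathcal{F}\cap\mathcal{W}$ dies in $\mathrm{Ho}(\mathcal{A})$; combined with Corollary \ref{cor: factorization}, this upgrades squares commuting in $\mathcal{A}$ only ``up to a trivial object'' into genuinely commuting squares in $\mathrm{Ho}(\mathcal{A})$.

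\textbf{Existence and the trivial triangle (TR1).} Existence of a triangle on any $f\colon X\longrightarrow Y$ in $\mathcal{A}$ is built into Definition \ref{def: disinguishedtriangle}: one pushes the fixed suspension sequence $X\overset{i_X}{\longrightarrow}M\overset{p_X}{\longrightarrow}\Sigma X\overset{\delta_X}{\dashrightarrow}$ out along $f$ via Proposition \ref{prop: pushout version 1}, producing the $\mathbb{E}$-triangle $Y\overset{g}{\longrightarrow}C(f)\overset{h}{\longrightarrow}\Sigma X\overset{f_*\delta_X}{\dashrightarrow}$; existence for a general morphism of $\mathrm{Ho}(\mathcal{A})$ then follows from the reduction above. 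For the trivial triangle I would note that the standard triangle induced by $\mathrm{id}_X$ is $X\overset{\gamma(\mathrm{id})}{\longrightarrow}X\overset{\gamma(i_X)}{\longrightarrow}M\overset{\gamma(p_X)}{\longrightarrow}\Sigma X$ with $M\in\mathcal{F}\cap\mathcal{W}$; since $M\cong 0$ in $\mathrm{Ho}(\mathcal{A})$, this is isomorphic to $X\overset{\mathrm{id}}{\longrightarrow}X\longrightarrow 0\longrightarrow\Sigma X$, which therefore lies in $\Delta$.

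\textbf{Rotation (TR2) and the morphism axiom (TR3).} For rotation I would show that $Y\overset{\gamma(g)}{\longrightarrow}C(f)\overset{\gamma(h)}{\longrightarrow}\Sigma X\overset{-\Sigma\gamma(f)}{\longrightarrow}\Sigma Y$ is isomorphic to the standard triangle induced by $g\colon Y\longrightarrow C(f)$. Concretely, I would assemble a diagram from the suspension sequences of $X$ and $Y$ and the pushout defining $C(f)$, and apply Proposition \ref{prop: ET4 version homotopy} to identify the cone of $g$ with $\Sigma X$; the connecting morphism comes out to be $-\Sigma\gamma(f)$ after the sign bookkeeping attached to $f_*\delta_X$. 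Since $\Sigma$ is invertible, the converse rotation then follows formally. For (TR3), given a commutative square relating the bases $f$ and $f'$ of two standard triangles, I would lift it into $\mathcal{A}$ via Corollary \ref{CcapF}, use (ET3) and Lemma \ref{lem:weak equi completion} to produce a compatible morphism $\Sigma X\longrightarrow\Sigma X'$, and invoke the functoriality of the pushout construction (with the vanishing mechanism above) to obtain the required fill-in $C(f)\longrightarrow C(f')$ commuting in $\mathrm{Ho}(\mathcal{A})$.

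\textbf{The octahedral axiom (TR4) and the main obstacle.} For (TR4), given composable morphisms $X\overset{u}{\longrightarrow}Y\overset{v}{\longrightarrow}Z$ in $\mathcal{A}$, I would form the cones $C(u)$, $C(v)$, $C(vu)$ by pushing out the relevant suspension sequences and then build the octahedron using Proposition \ref{prop: ET4 version homotopy} together with, crucially, Proposition \ref{prop: composition of homotopy squares}: since $C(vu)$ is the pushout of the $C(u)$-construction along $v$, the composite of the two homotopy cartesian squares is again homotopy cartesian, which is exactly what yields the $\mathbb{E}$-triangle $C(u)\longrightarrow C(vu)\longrightarrow C(v)\dashrightarrow$ forming the fourth edge. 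I expect the octahedral axiom, together with the consistent propagation of the signs from rotation, to be the main obstacle: the connecting morphisms produced by the cone construction are determined in $\mathrm{Ho}(\mathcal{A})$ only after repeatedly discarding maps factoring through trivial objects, so the bulk of the work is checking that all fill-in morphisms can be chosen compatibly and that the octahedral commutativities, which hold in $\mathcal{A}$ only modulo such trivial maps, become exact in $\mathrm{Ho}(\mathcal{A})$ by Corollary \ref{cor: factorization}.
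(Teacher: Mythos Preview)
Your overall plan and your treatment of (TR1) match the paper. For (TR2) the paper does not use Proposition~\ref{prop: ET4 version homotopy} but rather Proposition~\ref{prop: pullback mid complete homotopy square}: since $f_*\delta_X=(\Sigma f)^*\delta_Y$, one gets a homotopy cartesian square whose associated $\mathbb{E}$-triangle $C(f)\to\Sigma X\oplus N\to\Sigma Y$ \emph{is} the standard triangle on $g$, with $N\in\mathcal{F}\cap\mathcal{W}$ collapsing in $\mathrm{Ho}(\mathcal{A})$. For (TR3) the paper lifts $u',v'$ to $u,v$ as you suggest, but the point you gloss over is that the lifted square only commutes modulo a map through some $W\in\mathcal{C}\cap\mathcal{F}\cap\mathcal{W}$; the paper uses $\mathbb{E}(\Sigma X,W)=0$ to factor $f'u-vf$ through $i_X$, so that $(f'u-vf)_*\delta_X=0$ and hence $v_*(f_*\delta_X)=\bar u^{\,*}(f'_*\delta_{X'})$ holds exactly, after which (ET2) (not (ET3) or Lemma~\ref{lem:weak equi completion}) supplies the fill-in.

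The genuine gap is in (TR4). Proposition~\ref{prop: composition of homotopy squares} only says that the composite of two homotopy cartesian squares is again homotopy cartesian; it does not produce an $\mathbb{E}$-triangle $C(u)\to C(vu)\to C(v)$. Indeed $C(v)$ is built from the suspension sequence of $Y$, which appears nowhere in your two squares, and without $v$ being an $\mathbb{E}$-inflation there is no $\mathbb{E}$-triangle available to identify the cofibre of $C(u)\to C(vu)$. The paper therefore takes a different route. First (Lemma~\ref{lem: TR4_1}) it replaces $f,g$ by cofibrations via the factorisation axiom, so that one may assume $f,g,gf$ are $\mathbb{E}$-inflations. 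Second (Lemma~\ref{lem: a morphism induced by cofib}) it shows that for a cofibration $f$ with $\mathbb{E}$-triangle $X\to Y\to Z'\overset{\delta_f}{\dashrightarrow}$, the standard triangle is isomorphic in $\mathrm{Ho}(\mathcal{A})$ to $X\to Y\to Z'\overset{-\gamma(l_f)}{\longrightarrow}\Sigma X$ with $l_f^*\delta_X=\delta_f$, because the comparison $C(f)\to Z'$ is a trivial fibration. With these two reductions the octahedron is read off directly from (ET4) applied to the honest $\mathbb{E}$-triangles on $f$, $g$, $gf$, and the remaining commutativities in $\mathrm{Ho}(\mathcal{A})$ are verified by your vanishing-through-trivial-objects mechanism via Corollary~\ref{cor: factorization}. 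So the missing ingredients are the reduction to cofibrations and Lemma~\ref{lem: a morphism induced by cofib}; Proposition~\ref{prop: composition of homotopy squares} plays no role in the paper's proof of (TR4).
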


To prove Theorem \ref{thm: main}, by Proposition \ref{shift} it remains to verify the axioms (TR1), (TR2), (TR3) and (TR4).

\vskip5pt

For any object $X\in\mathrm{Ho}(\mathcal{A})$, consider the following commutative diagram
$$\xymatrix@R=0.5cm{X \ar[r]^{i_X} \ar[d]_{1_X} & M \ar[r]^{p_X} \ar[d]^{1_M} & \Sigma X \ar@{-->}[r]^-{\delta_X} \ar@{=}[d] & {} \\ X \ar[r]^{i_X} & M \ar[r]^{p_X} & \Sigma X \ar@{-->}[r]^-{\delta_X} & {}}$$

\noindent where $X\overset{i_X}{\longrightarrow}M\overset{p_X}{\longrightarrow}\Sigma X\overset{\delta_X}{\dashrightarrow}$ is the suspension sequence of $X$. By definition $X \stackrel {{\rm Id}} \longrightarrow X \longrightarrow M \longrightarrow \Sigma X $ is a standard triangle. Since $M \in \mathcal{F} \cap \mathcal{W}$, $M\cong 0$ in $ \mathrm{Ho}(\mathcal{A})$. Thus, the triangle $ X \stackrel {{\rm Id}} \longrightarrow X \longrightarrow 0 \longrightarrow \Sigma X $ is in $ \Delta $.

\vskip5pt

For any morphism $ f: X \longrightarrow Y $ in $ \mathrm{Ho}(\mathcal{A}) $, by Proposition \ref{prop:canonical} there is a morphism $ \tilde{f}: RQX \longrightarrow RQY $ in $ \mathcal{A} $ such that the following square commutes in $\mathrm{Ho}(\mathcal{A}) $:
$$\xymatrix@R=0.4cm{X \ar[r]^f \ar[d]_{\theta_X} & Y \ar[d]^{\theta_Y} \\ RQX \ar[r]^{\gamma(\tilde{f})} & RQY}$$

\noindent where $\theta_X$ and $\theta_Y$ are isomorphisms in $\mathrm{Ho}(\mathcal{A})$. Let $\xymatrix{RQX \ar[r]^-{\gamma(\tilde{f})} & RQY \ar[r]^-{\gamma(\tilde{g})} & C(\tilde{f}) \ar[r]^-{\gamma(\tilde{h})} & \Sigma RQX}$
be the standard triangle induced by $ \tilde{f} $. Then there is an isomorphism of triangles in $\mathrm{Ho}(\mathcal{A})$:
$$\xymatrix@R=0.5cm{X \ar[r]^f \ar[d]_{\theta_X} & Y \ar[r]^{\gamma(\tilde{g})\circ\theta_Y} \ar[d]^{\theta_Y} & C(f) \ar[r]^{\Sigma\theta_X^{-1}\circ\gamma(\tilde{h})} \ar@{=}[d] & \Sigma X \ar[d]^{\Sigma\theta_X} \\
RQX \ar[r]^-{\gamma(\tilde{f})} & RQY \ar[r]^-{\gamma(\tilde{g})} & C(f) \ar[r]^-{\gamma(\tilde{h})} & \Sigma RQX.}$$

\noindent Thus, the first row belongs to $\Delta$, which shows that every morphism $f$ in  $\mathrm{Ho}(\mathcal{A})$ can be embedded into a triangle in $\Delta$. This completes the proof of (TR1).

\vskip5pt

To prove (TR2),  it suffices to verify the following lemma.
\begin{lem}\label{lem: suspension} \ For a  standard triangle  $\xymatrix{X \ar[r]^-{\gamma(f)} & Y \ar[r]^-{\gamma(g)} & C(f) \ar[r]^-{\gamma(h)} & \Sigma X}$ in $\mathrm{Ho}(\mathcal{A})$, 
the triangle $\xymatrix{Y \ar[r]^-{\gamma(g)} & C(f) \ar[r]^-{\gamma(h)} & \Sigma X \ar[r]^-{-\Sigma\gamma(f)} & \Sigma Y}$ is in $\Delta$.
\end{lem}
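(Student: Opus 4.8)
The plan is to show that the rotated triangle is isomorphic in $\mathrm{Ho}(\mathcal{A})$ to the \emph{standard triangle induced by} $g$, and then to invoke that $\Delta$ is closed under isomorphism (Definition \ref{def: disinguishedtriangle}). So first I would form the standard triangle induced by $g\colon Y\longrightarrow C(f)$: using a suspension sequence $Y\overset{i_Y}{\longrightarrow}N\overset{p_Y}{\longrightarrow}\Sigma Y\overset{\delta_Y}{\dashrightarrow}$ of $Y$ (with $N\in\mathcal{F}\cap\mathcal{W}$), Definition \ref{def: disinguishedtriangle} yields an $\mathbb{E}$-triangle $C(f)\overset{g'}{\longrightarrow}C(g)\overset{h'}{\longrightarrow}\Sigma Y\overset{g_*\delta_Y}{\dashrightarrow}$, and $Y\overset{\gamma(g)}{\longrightarrow}C(f)\overset{\gamma(g')}{\longrightarrow}C(g)\overset{\gamma(h')}{\longrightarrow}\Sigma Y$ lies in $\Delta$.

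The key input is the identity $f_*\delta_X=(\Sigma f)^*\delta_Y$ coming from the very construction of $\Sigma f$. It says that the $\mathbb{E}$-triangle $Y\overset{g}{\longrightarrow}C(f)\overset{h}{\longrightarrow}\Sigma X\overset{f_*\delta_X}{\dashrightarrow}$ from the $C(f)$-diagram and the suspension sequence of $Y$ both begin at $Y$ and are linked by $(\Sigma f)^*\delta_Y=f_*\delta_X$. I would therefore apply Proposition \ref{prop: pullback mid complete homotopy square}, with left vertical $\mathrm{Id}_Y$ and right vertical $\Sigma f$, to obtain an $\mathbb{E}$-triangle
$$C(f)\overset{\left(\begin{smallmatrix} h \\ v\end{smallmatrix}\right)}{\longrightarrow}\Sigma X\oplus N\overset{(-\Sigma f,\ p_Y)}{\longrightarrow}\Sigma Y\overset{g_*\delta_Y}{\dashrightarrow}$$
for a suitable $v\colon C(f)\longrightarrow N$. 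This realizes the \emph{same} extension $g_*\delta_Y$ as the $C(g)$-triangle, so by the realization axiom (ET2)(1) applied with $u=\mathrm{Id}_{C(f)}$ and $w=\mathrm{Id}_{\Sigma Y}$, together with Corollary \ref{cor:iso 2 of 3}, there is an isomorphism $\psi\colon\Sigma X\oplus N\longrightarrow C(g)$ in $\mathcal{A}$ with $\psi\left(\begin{smallmatrix} h \\ v\end{smallmatrix}\right)=g'$ and $h'\psi=(-\Sigma f,\ p_Y)$.

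It then remains to pass to $\mathrm{Ho}(\mathcal{A})$. Since $N\in\mathcal{F}\cap\mathcal{W}$, one has $N\cong 0$ there, so the inclusion $\iota=\left(\begin{smallmatrix} 1 \\ 0\end{smallmatrix}\right)\colon\Sigma X\longrightarrow\Sigma X\oplus N$ becomes an isomorphism, and I would set $\phi:=\gamma(\psi)\gamma(\iota)\colon\Sigma X\longrightarrow C(g)$, which is an isomorphism. Because $v$ factors through $N\cong 0$, the morphisms $\left(\begin{smallmatrix} h \\ v\end{smallmatrix}\right)$ and $\iota h$ agree in $\mathrm{Ho}(\mathcal{A})$, whence $\phi\,\gamma(h)=\gamma(g')$; and since $(-\Sigma f,\ p_Y)\iota=-\Sigma f$, one gets $\gamma(h')\phi=-\gamma(\Sigma f)=-\Sigma\gamma(f)$. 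These are precisely the two commuting squares (with identities on the remaining terms) exhibiting an isomorphism of triangles between the rotated triangle and the standard triangle induced by $g$, so the rotated triangle lies in $\Delta$.

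I expect the main obstacle to be packaging the comparison as a single application of Proposition \ref{prop: pullback mid complete homotopy square}: one must recognize that the correct third term is the direct sum $\Sigma X\oplus N$ rather than $\Sigma X$ itself, and that the auxiliary summand $N$ is invisible in $\mathrm{Ho}(\mathcal{A})$. Once this is set up, the delicate sign $-\Sigma\gamma(f)$ is produced automatically by the matrix $(-\Sigma f,\ p_Y)$ supplied by that proposition, so no separate sign computation is needed.
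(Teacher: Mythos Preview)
Your proposal is correct and follows essentially the same approach as the paper: both apply Proposition \ref{prop: pullback mid complete homotopy square} to the pair of $\mathbb{E}$-triangles $Y\to C(f)\to\Sigma X\overset{f_*\delta_X}{\dashrightarrow}$ and $Y\to N\to\Sigma Y\overset{\delta_Y}{\dashrightarrow}$ to produce the $\mathbb{E}$-triangle $C(f)\to\Sigma X\oplus N\to\Sigma Y\overset{g_*\delta_Y}{\dashrightarrow}$, and then kill the $N$-summand in $\mathrm{Ho}(\mathcal{A})$. The only cosmetic difference is that the paper observes directly that this $\mathbb{E}$-triangle, together with $\left(\begin{smallmatrix}0\\1\end{smallmatrix}\right)\colon N\to\Sigma X\oplus N$, already \emph{is} a defining diagram for the standard triangle of $g$, whereas you first posit an abstract $C(g)$ and then compare via the realization axiom and Corollary \ref{cor:iso 2 of 3}; both routes are valid.
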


\begin{proof} \ By definition there is a commutative diagram of $\mathbb{E}$-triangles in $\mathcal{A}$:
$$\xymatrix@R=0.5cm{X \ar[r]^{i_X} \ar[d]_{f} & M \ar[r]^{p_X} \ar[d] & \Sigma X \ar@{-->}[r]^-{\delta_X} \ar@{=}[d] & {} \\
Y \ar[r]^{g} & C(f) \ar[r]^{h} & \Sigma X \ar@{-->}[r]^-{f_*\delta_X} & {.}}$$

\noindent By the definition of $\Sigma$ one has a commutative diagram of $\mathbb{E}$-triangles in $\mathcal{A}$:
$$\xymatrix@R=0.5cm{X \ar[r]^{i_X} \ar[d]_f & M \ar[r]^{p_X} \ar[d] & \Sigma X \ar@{-->}[r]^-{\delta_X} \ar[d]_{f'} & {} \\ Y \ar[r]^-{i_Y} & N \ar[r]^-{p_Y} & \Sigma Y \ar@{-->}[r]^-{\delta_Y} & {}}$$
such that $f_*\delta_X=f'^*\delta_Y$, where $\Sigma\gamma(f)=\gamma(f')$. By Proposition \ref{prop: pullback mid complete homotopy square} there is a morphism $t: C(f)\longrightarrow N$ such that the following diagram
$$\xymatrix@R=0.7cm{Y \ar[r]^{g} \ar@{=}[d] & C(f) \ar[r]^{h} \ar@{.>}[d]_{t} & \Sigma X \ar@{-->}[r]^-{f_*\delta_X} \ar[d]_{f'} & {} \\
Y \ar[r]^-{i_Y} & N \ar[r]^-{p_Y} & \Sigma Y \ar@{-->}[r]^-{\delta_Y} & {}}$$
 commutes in $\mathcal{A}$, and the right-hand square is homotopy cartesian, that is, $$\xymatrix@C=0.6cm{C(f) \ar[rr]^-{\left( \begin{smallmatrix} h \\ t \end{smallmatrix} \right)} && \Sigma X\oplus N \ar[rr]^-{(-f',\ p_Y )} && \Sigma Y \ar@{-->}[r]^-{g_*\delta_Y}&}$$
is an $\mathbb{E}$-triangle. Thus one has a commutative diagram of $\mathbb{E}$-triangles 
$$\xymatrix{Y \ar[r]^{i_Y} \ar[d]_{g}& N \ar[d]_{ \left( \begin{smallmatrix} 0 \\ 1 \end{smallmatrix} \right)}  \ar[rr]^{p_Y} && \Sigma Y \ar@{=}[d] \ar@{-->}[r]^{\delta_Y} &   
\\ C(f) \ar[r]^-{\left( \begin{smallmatrix} h \\ t \end{smallmatrix} \right)} & \Sigma X\oplus N \ar[rr]^-{(-f',\ p_Y )} && \Sigma Y \ar@{-->}[r]^-{g_*\delta_Y}&} $$
By definition one has the standard triangle induced by $g$:
$$\xymatrix@C=0.8cm{Y \ar[rr]^{\gamma(g)} && C(f) \ar[rr]^-{\gamma\left( \begin{smallmatrix} h \\ t \end{smallmatrix} \right)} && \Sigma X\oplus N \ar[rr]^-{\gamma(-f', p_Y )} && \Sigma Y }$$

\noindent Since $N\in \mathcal{W}$, $N$ is the zero object in $\mathrm{Ho}(\mathcal{A})$. Therefore 
$\xymatrix{Y \ar[r]^-{\gamma(g)} & C(f) \ar[r]^-{\gamma(h)} & \Sigma X \ar[r]^-{-\Sigma\gamma(f)} & \Sigma Y}$ is a triangle in $\Delta$. \end{proof}

\vskip5pt

To prove (TR3), we first show a lemma, which makes two terms in a standard triangle lie in $\mathcal{C}\cap\mathcal{F}$.

\begin{lem} \label{lem:TR3} \ For any standard triangle $\xymatrix{X \ar[r]^-{\gamma(f)} & Y \ar[r]^-{\gamma(g)} & C(f) \ar[r]^-{\gamma(h)} & \Sigma X}$ in $\mathrm{Ho}(\mathcal{A})$, 
there is a morphism $\tilde{f}:RQX\longrightarrow RQY$ in $\mathcal{A}$ and an isomorphism of triangles in $\mathrm{Ho}(\mathcal{A}):$
$$\xymatrix@R=0.5cm{X \ar[r]^{\gamma(f)} \ar[d]_{\theta_X} & Y \ar[r]^{\gamma(g)} \ar[d]^{\theta_Y} & C(f) \ar[r]^{\gamma(h)} \ar[d]^{w} & \Sigma X \ar[d]^{\Sigma\theta_X} \\
RQX \ar[r]^{\gamma(\tilde{f})} & RQY \ar[r]^{\gamma(\tilde{g})} & C(\tilde{f}) \ar[r]^{\gamma(\tilde{h})} & \Sigma RQX}$$

\noindent where the second row is the standard triangle induced by $\tilde{f}$, and  $RQX$ and $RQY$ are in $\mathcal{C}\cap\mathcal{F}$.
\end{lem}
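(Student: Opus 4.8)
The plan is to transport the standard triangle of $f$ along the canonical isomorphisms $\theta_X$ and $\theta_Y$, and to realize this transport by an explicit zigzag of cone comparisons carried out inside $\mathcal{A}$. Recall that $\theta_X=\gamma(\sigma_X)\gamma(\rho_X)^{-1}$ arises from the $\mathbb{E}$-triangles $R_X\to QX\xrightarrow{\rho_X}X\dashrightarrow$ and $QX\xrightarrow{\sigma_X}RQX\to Q_X\dashrightarrow$, where $\rho_X\in\mathrm{TFib}$, $\sigma_X\in\mathrm{TCoFib}$, $QX\in\mathcal{C}$ and $RQX\in\mathcal{C}\cap\mathcal{F}$, and similarly for $Y$; in particular $RQX,RQY\in\mathcal{C}\cap\mathcal{F}$, which is the terminal requirement of the lemma.

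First I would build, using the lifting axiom (CM3), a commutative ladder in $\mathcal{A}$
$$X\xleftarrow{\rho_X}QX\xrightarrow{\sigma_X}RQX,\qquad Y\xleftarrow{\rho_Y}QY\xrightarrow{\sigma_Y}RQY$$
with vertical maps $f$, $f_Q$, $f_{RQ}$. Lifting $f\rho_X$ through the trivial fibration $\rho_Y$ against the cofibration $0\to QX$ yields $f_Q\colon QX\to QY$ with $\rho_Yf_Q=f\rho_X$; lifting $\sigma_Yf_Q$ through the fibration $RQY\to 0$ against the trivial cofibration $\sigma_X$ yields $f_{RQ}\colon RQX\to RQY$ with $f_{RQ}\sigma_X=\sigma_Yf_Q$. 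Setting $\tilde f:=f_{RQ}$, a short computation with $\gamma$ gives $\gamma(\tilde f)\,\theta_X=\theta_Y\,\gamma(f)$, so this $\tilde f$ is an admissible choice for the morphism of Proposition~\ref{prop:canonical} and makes the required left-hand square commute.

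The technical core is a cone-comparison principle: if a square $bf=f'a$ commutes in $\mathcal{A}$ with $a\colon X\to X'$ and $b\colon Y\to Y'$ weak equivalences, then the standard triangles of $f$ and $f'$ are isomorphic in $\mathrm{Ho}(\mathcal{A})$ via vertical components $\gamma(a),\gamma(b),\gamma(w),\Sigma\gamma(a)$, with $w$ a weak equivalence. To establish it I would pick a representative $\Sigma a$ that is itself a weak equivalence and satisfies $a_*\delta_X=(\Sigma a)^*\delta_{X'}$ (available from the construction of $\Sigma$ on morphisms together with Lemma~\ref{lem:weak equi completion}), verify the extension identity $b_*(f_*\delta_X)=f'_*a_*\delta_X=f'_*(\Sigma a)^*\delta_{X'}=(\Sigma a)^*(f'_*\delta_{X'})$ for the two cone $\mathbb{E}$-triangles $Y\xrightarrow{g}C(f)\xrightarrow{h}\Sigma X\dashrightarrow f_*\delta_X$ and $Y'\xrightarrow{g'}C(f')\xrightarrow{h'}\Sigma X'\dashrightarrow f'_*\delta_{X'}$, and then apply Lemma~\ref{lem:weak equi completion}(3) with outer verticals $b$ and $\Sigma a$. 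This produces a weak equivalence $w\colon C(f)\to C(f')$ with $wg=g'b$ and $h'w=(\Sigma a)h$, hence the desired triangle isomorphism in $\mathrm{Ho}(\mathcal{A})$.

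Finally I would apply this principle twice along the ladder: to the square $\rho_Yf_Q=f\rho_X$ to obtain a weak equivalence $w_1\colon C(f_Q)\to C(f)$, and to $f_{RQ}\sigma_X=\sigma_Yf_Q$ to obtain $w_2\colon C(f_Q)\to C(\tilde f)$. Composing the inverse of the first triangle isomorphism with the second, and using that $\Sigma$ is a functor so that $\Sigma\gamma(\sigma_X)\circ(\Sigma\gamma(\rho_X))^{-1}=\Sigma\theta_X$, yields the isomorphism of the standard triangle of $f$ onto that of $\tilde f$ with vertical components precisely $\theta_X$, $\theta_Y$, $w:=\gamma(w_2)\gamma(w_1)^{-1}$ and $\Sigma\theta_X$. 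I expect the main obstacle to be the bookkeeping in the cone-comparison step — checking the $\mathbb{E}$-extension identity and securing a representative $\Sigma a$ that is simultaneously a weak equivalence and compatible with $\delta_X$ and $\delta_{X'}$ — after which the two applications and their composition are routine.
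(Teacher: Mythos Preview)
Your proposal is correct and follows essentially the same route as the paper. The only cosmetic differences are that the paper obtains the lifts $\bar f\colon QX\to QY$ and $\tilde f\colon RQX\to RQY$ from the vanishing $\mathbb{E}(QX,R_Y)=0$ and $\mathbb{E}(Q_X,RQY)=0$ rather than from (CM3) (which is equivalent for an admissible model structure), and it carries out your cone-comparison principle twice in explicit form rather than stating it once and applying it; in both cases the key input is exactly Lemma~\ref{lem:weak equi completion}(3) together with the choice of a weak-equivalence representative of $\Sigma a$ satisfying $a_*\delta_X=(\Sigma a)^*\delta_{X'}$.
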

\begin{proof} \ Recall that, by the completeness of cotorsion pairs $(\mathcal{C}, \mathcal{F}\cap \mathcal{W})$  and $(\mathcal{C}\cap \mathcal{W}, \mathcal{F})$,  one has  
$\mathbb{E}$-triangles in $\mathcal A$: 
\begin{align*} & R_X \longrightarrow QX \overset{\rho_X}{\longrightarrow}X\dashrightarrow, \ \ \ \ \ \ QX \overset{\sigma_X}{\longrightarrow} RQX \longrightarrow Q_X\dashrightarrow \\ 
& R_Y \longrightarrow QY \overset{\rho_Y }{\longrightarrow}Y \dashrightarrow, \ \ \ \ \ \ QY  \overset{\sigma_Y }{\longrightarrow} RQY  \longrightarrow Q_Y \dashrightarrow \end{align*}
where $QX, QY\in \mathcal{C}$, \ $R_X, R_Y\in \mathcal{F}\cap \mathcal{W}$, \  $RQX, RQY\in \mathcal{F}$, \  $Q_X, Q_Y\in \mathcal{C}\cap \mathcal{W}$. Thus $\rho_X,\ \sigma_X, \rho_Y, \sigma_Y$ are weak equivalences, and one has canonical isomorphisms in $\mathrm{Ho}(\mathcal{A}):$
$$\theta_X:=\gamma(\sigma_X)\circ\gamma(\rho_X)^{-1}:X\longrightarrow RQX, \ \ \ \ \ \theta_Y:=\gamma(\sigma_Y)\circ\gamma(\rho_Y)^{-1}: Y\longrightarrow RQY$$

Applying $\mathcal{A}(QX,-)$ to $R_Y \longrightarrow QY \overset{\rho_Y }{\longrightarrow}Y \dashrightarrow$, one gets an exact sequence
$$\xymatrix{\mathcal{A}(QX,QY) \ar[rr]^{\mathcal{A}(QX,\rho_Y)}  && \mathcal{A}(QX,Y)  \ar[rr] && \mathbb{E}(QX,R_Y) }   $$

\noindent Since $\mathbb{E}(QX,R_Y)=0$, there is a morphism $\bar{f}: QX \longrightarrow QY$ such that $f \rho_X=\rho_Y \bar{f}$. 
Applying $\mathcal{A}(-, RQY)$ to $QX \overset{\sigma_X}{\longrightarrow} RQX \longrightarrow Q_X\dashrightarrow$, one gets an exact sequence
$$\xymatrix{\mathcal{A}(RQX,RQY) \ar[rr]^{\mathcal{A}(\sigma_X,RQY)}  && \mathcal{A}(QX,RQY)  \ar[rr] && \mathbb{E}(Q_X,RQY) }   $$

\noindent Since $\mathbb{E}(Q_X,RQY)=0$, there is a morphism $\tilde{f}: RQX \longrightarrow RQY$ such that $\tilde{f}\sigma_X=\sigma_Y \bar{f}$. 
\vskip5pt

Consider the suspension sequences of $X$ and $QX$: 
$$\xymatrix{X \ar[r] & M_X \ar[r] & \Sigma X \ar@{-->}[r]^-{\delta_X} & {} },  \ \  \xymatrix{QX \ar[r] & M_{QX} \ar[r] & \Sigma QX \ar@{-->}[r]^-{\delta_{QX}} & {.} }$$

\noindent By the definition of $\Sigma \rho_X$, one has the commutative diagram of $\mathbb{E}$-triangles in $\mathcal{A}$:
$$\xymatrix@R=0.5cm{X \ar[r] \ar[d]_{\rho_X} & M_{X} \ar[r] \ar[d] & \Sigma X \ar@{-->}[r]^-{\delta_{X}} \ar[d]^{\Sigma \rho_X} & {} \\
QX \ar[r] & M_{QX} \ar[r] & \Sigma QX \ar@{-->}[r]^-{\delta_{QX}} & {}}$$
such that $(\rho_X)_*\delta_X=(\Sigma \rho_X)^*\delta_{QX}$.
Also, there are commutative diagrams of $\mathbb{E}$-triangles in $\mathcal{A}$:

\begin{minipage}[h]{0.45\linewidth}
\centering
$$\xymatrix@R=0.5cm{X \ar[r] \ar[d]_{f} & M_X \ar[r] \ar[d] & \Sigma X \ar@{-->}[r]^-{\delta_X} \ar@{=}[d] & {} \\
Y \ar[r]^{g} & C(f) \ar[r]^{h} & \Sigma X \ar@{-->}[r]^-{f_*\delta_X} & {}}$$
\end{minipage}
\hfill
\begin{minipage}[h]{0.45\linewidth}
\centering
$$\xymatrix@R=0.5cm{QX \ar[r] \ar[d]_{\bar{f}} & M_{QX} \ar[r] \ar[d] & \Sigma QX \ar@{-->}[r]^-{\delta_{QX}} \ar@{=}[d] & {} \\
QY \ar[r]^{\bar{g}} & C(\bar{f}) \ar[r]^{\bar{h}} & \Sigma QX \ar@{-->}[r]^-{\bar{f}_*\delta_{QX}} & {.}}$$
\end{minipage}

\noindent Consider the diagram of $\mathbb{E}$-triangles in $\mathcal A$:
$$\xymatrix@R=0.5cm{QY \ar[r]^{\bar{g}} \ar[d]_{\rho_Y} & C(\bar{f}) \ar[r]^{\bar{h}} \ar@{.>}[d]_{\varphi} & \Sigma QX \ar@{-->}[r]^-{\bar{f}_*\delta_{QX}} \ar[d]^{\Sigma \rho_X} & {} \\ Y \ar[r]^g & C(f) \ar[r]^h & \Sigma X \ar@{-->}[r]^-{f_*\delta_X} & {} }  $$
Since $$(\rho_Y)_*\bar{f}_*\delta_{QX} = (\rho_Y\bar{f})_*\delta_{QX} = (f \rho_X)_* \delta_{QX}=f_*(\rho_X)_*\delta_{QX}=f_*(\Sigma \rho_X)^*\delta_X=(\Sigma \rho_X)^*f_*\delta_X$$
\noindent by Lemma \ref{lem:weak equi completion} (3) there is a weak equivalence $\varphi$ such that the diagram above commutes. 

Consider the suspension sequence $\xymatrix{RQX \ar[r] & M_{RQX} \ar[r] & \Sigma RQX \ar@{-->}[r]^-{\delta_{RQX}} & {} }$ of $RQX$. By the definition of $\Sigma \sigma_X$, one has the commutative diagram of $\mathbb{E}$-triangles in $\mathcal{A}$:
$$\xymatrix@R=0.5cm{QX \ar[r] \ar[d]_{\sigma_X} & M_{QX} \ar[r] \ar[d] & \Sigma QX \ar@{-->}[r]^-{\delta_{QX}} \ar[d]^{\Sigma \sigma_X} & {} \\
RQX \ar[r] & M_{RQX} \ar[r] & \Sigma RQX \ar@{-->}[r]^-{\delta_{RQX}} & {}}$$
such that $(\Sigma \sigma_X)^*\delta_{RQX} = (\sigma_X)_*\delta_{QX}$. 
Also, there is a commutative diagram of $\mathbb{E}$-triangles in $\mathcal{A}$:
$$\xymatrix@R=0.5cm{RQX \ar[r] \ar[d]_{\tilde{f}} & M_{RQX} \ar[r] \ar[d] & \Sigma RQX \ar@{-->}[r]^-{\delta_{RQX}} \ar@{=}[d] & {} \\
RQY \ar[r]^{\tilde{g}} & C(\tilde{f}) \ar[r]^{\tilde{h}} & \Sigma RQX \ar@{-->}[r]^-{\tilde{f}_*\delta_{RQX}} & {.}}$$
\noindent Consider the following diagram of $\mathbb{E}$-triangles in $\mathcal A$:
$$\xymatrix@R=0.5cm{QY \ar[r]^{\bar{g}} \ar[d]_{\sigma_Y} & C(\bar{f}) \ar[r]^{\bar{h}} \ar@{.>}[d]_{\psi} & \Sigma QX \ar@{-->}[r]^-{\bar{f}_*\delta_{QX}} \ar[d]^{\Sigma \sigma_X} & {} \\ RQY \ar[r]^{\tilde{g}} & C(\tilde{f}) \ar[r]^{\tilde{h}} & \Sigma RQX \ar@{-->}[r]^-{\tilde{f}_*\delta_{RQX}} & {} }  $$

\noindent Since $$(\sigma_Y)_*\bar{f}_*\delta_{QX}= (\sigma_Y\bar{f})_*\delta_{QX} = (\tilde{f}\sigma_X)_*\delta_{QX}  = \tilde{f}_*(\sigma_X)_*\delta_{QX}=\tilde{f}_*(\Sigma \sigma_X)^*\delta_{RQX}=(\Sigma \sigma_X)^*\tilde{f}_*\delta_{RQX}$$ 
by Lemma \ref{lem:weak equi completion} (3) there is a weak equivalence $\psi$ such that the diagram above commutes.

\vskip5pt

Therefore one has the commutative diagram in $\mathcal{A}$:
\begin{equation}\label{eq: phipsi}
\begin{aligned}
\xymatrix@R=0.7cm{X \ar[r]^{f} & Y \ar[r]^{g} & C(f) \ar[r]^{h}& \Sigma X \\
QX \ar[u]^{\rho_X} \ar[r]^{\bar{f}} \ar[d]_{\sigma_X} & QY \ar[u]^{\rho_Y} \ar[r]^{\bar{g}} \ar[d]_{\sigma_Y}  & C(\bar{f}) \ar@{.>}[u]_{\varphi} \ar[r]^{\bar{h}} \ar@{.>}[d]^{\psi} & \Sigma QX \ar[u]_{\Sigma\rho_X} \ar[d]^{\Sigma\sigma_X} \\
RQX \ar[r]^{\tilde{f}} & RQY \ar[r]^{\tilde{g}} & C(\tilde{f}) \ar[r]^{\tilde{h}} & \Sigma RQX}
\end{aligned}
\end{equation}

\vskip5pt

\noindent where the images under $\gamma$ of the first, second, and third rows are standard triangles induced by  $f$, $\bar{f}$, and $\tilde{f}$, respectively. Since $\theta_X=\gamma(\sigma_X)\circ\gamma(\rho_X)^{-1},\ \theta_Y=\gamma(\sigma_Y)\circ\gamma(\rho_Y)^{-1}$, applying $\gamma$ to (\ref{eq: phipsi}) one gets the commutative diagram in $\mathrm{Ho}(\mathcal{A})$:

$$\xymatrix@R=0.8cm{X \ar[r]^{\gamma(f)} \ar[d]_{\theta_X} & Y \ar[r]^{\gamma(g)} \ar[d]^{\theta_Y} & C(f) \ar[r]^{\gamma(h)} \ar[d]^{\gamma(\psi)\circ\gamma(\varphi)^{-1}} & \Sigma X \ar[d]^{\Sigma\theta_X} \\
RQX \ar[r]_{\gamma(\tilde{f})} & RQY \ar[r]_{\gamma(\tilde{g})} & C(\tilde{f}) \ar[r]_{\gamma(\tilde{h})} & \Sigma RQX}$$
This completes the proof.  \end{proof}

To prove (TR3), by Lemma \ref{lem:TR3} it suffices to verify the following lemma.

\begin{lem}\label{lem: TR3} \
Suppose that the both rows in the diagram below are standard triangles in $\mathrm{Ho}(\mathcal{A})$, with $X, Y, X', Y' \in \mathcal{C} \cap \mathcal{F}$, and that the left-hand square commutes in $\mathrm{Ho}(\mathcal{A})$. Then there exists a morphism $w': C(f) \longrightarrow C(f')$ such that the diagram commutes in $\mathrm{Ho}(\mathcal{A})$.
$$\xymatrix@R=0.7cm{X \ar[r]^{\gamma(f)} \ar[d]_{u'} & Y \ar[r]^{\gamma(g)} \ar[d]^{v'} & C(f) \ar[r]^{\gamma(h)} \ar@{.>}[d]^{w'} & \Sigma X \ar[d]^{\Sigma u'} \\
X' \ar[r]^{\gamma(f')} & Y' \ar[r]^{\gamma(g')} & C(f') \ar[r]^{\gamma(h')} & \Sigma X'}$$
\end{lem}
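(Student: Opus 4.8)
The plan is to produce the middle morphism $w'$ from the additive realization axiom, after lifting $u'$ and $v'$ to $\mathcal{A}$ and checking one extension-level compatibility. First I would record the two defining diagrams from Definition \ref{def: disinguishedtriangle}: the standard triangle induced by $f$ gives the $\mathbb{E}$-triangle $Y\overset{g}{\longrightarrow}C(f)\overset{h}{\longrightarrow}\Sigma X\overset{f_*\delta_X}{\dashrightarrow}$, and similarly the one induced by $f'$ gives $Y'\overset{g'}{\longrightarrow}C(f')\overset{h'}{\longrightarrow}\Sigma X'\overset{f'_*\delta_{X'}}{\dashrightarrow}$. Since $X,X',Y,Y'\in \mathcal{C}\cap\mathcal{F}$, Corollary \ref{CcapF} lets me choose morphisms $u:X\longrightarrow X'$ and $v:Y\longrightarrow Y'$ in $\mathcal{A}$ with $\gamma(u)=u'$ and $\gamma(v)=v'$. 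I would also fix a representative $\Sigma u:\Sigma X\longrightarrow \Sigma X'$ in $\mathcal{A}$ of $\Sigma u'$; by the construction of $\Sigma$ it satisfies $u_*\delta_X=(\Sigma u)^*\delta_{X'}$. Note that to obtain the \emph{middle} vertical map of a morphism of $\mathbb{E}$-triangles one cannot use (ET3) or (ET3)$^{\mathrm{op}}$ (which yield the end maps); the correct tool is the first condition in the definition of an additive realization (part of (ET2)), which produces a middle morphism whenever the two end maps are compatible on the level of $\mathbb{E}$-extensions.

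The heart of the argument is therefore to verify the compatibility $v_*(f_*\delta_X)=(\Sigma u)^*(f'_*\delta_{X'})$, even though the lifted left square need not commute in $\mathcal{A}$. Using the biadditivity of $\mathbb{E}$ (so that $f'_*$ and $(\Sigma u)^*$ commute) together with $u_*\delta_X=(\Sigma u)^*\delta_{X'}$, the two sides equal $(vf)_*\delta_X$ and $(f'u)_*\delta_X$ respectively, so their difference is $(vf-f'u)_*\delta_X$. Commutativity of the left square in $\mathrm{Ho}(\mathcal{A})$ only gives $\gamma(vf)=\gamma(f'u)$; by Theorem \ref{QuillenThm} this forces $vf-f'u$ to factor as $\beta\alpha$ through some object $T\in \mathcal{C}\cap\mathcal{F}\cap\mathcal{W}$, with $\alpha:X\longrightarrow T$ and $\beta:T\longrightarrow Y'$. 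Then $(vf-f'u)_*\delta_X=\beta_*(\alpha_*\delta_X)$ with $\alpha_*\delta_X\in \mathbb{E}(\Sigma X,T)$, and this group vanishes: indeed $\Sigma X\in \mathcal{C}$, $T\in \mathcal{F}\cap\mathcal{W}$, and $(\mathcal{C},\mathcal{F}\cap\mathcal{W})$ is a cotorsion pair, so $\mathbb{E}(\mathcal{C},\mathcal{F}\cap\mathcal{W})=0$. Hence $\alpha_*\delta_X=0$ and the desired equality of $\mathbb{E}$-extensions holds.

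With this compatibility in hand, the additive realization axiom applied to the two $\mathbb{E}$-triangles above, together with the pair $(v,\Sigma u)$, produces a morphism $w':C(f)\longrightarrow C(f')$ in $\mathcal{A}$ with $w'g=g'v$ and $h'w'=(\Sigma u)h$. Applying $\gamma$ and using $\gamma(\Sigma u)=\Sigma u'$, the middle and right squares commute in $\mathrm{Ho}(\mathcal{A})$, so $\gamma(w')$ is the required map. I expect the main obstacle to be precisely the computation in the second paragraph: the lifted square fails to commute in $\mathcal{A}$, and what rescues the proof is that the discrepancy is visible only through a trivial object $T$, against which the cofibrant object $\Sigma X$ carries no $\mathbb{E}$-extensions. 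The remaining steps -- lifting via Corollary \ref{CcapF} and invoking the realization axiom -- are routine.
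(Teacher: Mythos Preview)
Your proposal is correct and follows essentially the same route as the paper: lift $u',v'$ to $\mathcal{A}$ via Corollary~\ref{CcapF}, use Theorem~\ref{QuillenThm} to factor the defect $f'u-vf$ through an object of $\mathcal{C}\cap\mathcal{F}\cap\mathcal{W}$, kill its contribution to the $\mathbb{E}$-extension using $\mathbb{E}(\Sigma X,\mathcal{F}\cap\mathcal{W})=0$, and then invoke the realization part of (ET2) to obtain the middle map. Your computation is in fact slightly more streamlined than the paper's: you observe directly that $\alpha_*\delta_X\in\mathbb{E}(\Sigma X,T)=0$, whereas the paper takes the extra step of first factoring $f'u-vf$ through $i_X:X\to M$ and then using $(i_X)_*\delta_X=0$; but the underlying idea is the same.
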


\begin{proof}  \ Since $X,\  X',\  Y,\  Y'\in \mathcal{C}\cap\mathcal{F}$, by Corollary \ref{CcapF} there are morphisms $u:X\longrightarrow X',\ v:Y\longrightarrow Y'$ in $\mathcal{A}$ such that $u'=\gamma(u),\ v'=\gamma(v)$. Hence $\gamma(f'u-vf)=\gamma(f')\gamma(u)-\gamma(v)\gamma(f)=\gamma(f')u'-v'\gamma(f)=0$. By Theorem \ref{QuillenThm} one has the equivalence $(\mathcal{C}\cap\mathcal{F})/(\mathcal{C}\cap\mathcal{F}\cap\mathcal{W})\longrightarrow\mathrm{Ho}(\mathcal{A})$, 
$[l]\mapsto\gamma(l)$. Thus $[f'u-vf]=0$, i.e., $f'u-fv: X\longrightarrow Y'$ factors through an object $W$ in $\mathcal{C}\cap\mathcal{F}\cap\mathcal{W}$:
$$\xymatrix@R=0.5cm{X\ar[rd]\ar[rr]^-{f'u-fv}&& Y' \\ & W \ar[ru]}$$
By the suspension sequence $X\overset{i}{\longrightarrow}M\overset{p}{\longrightarrow}\Sigma X\overset{\delta}{\dashrightarrow}$ and $\mathbb{E}(\Sigma X, W)=0$, 
one sees that every morphism $X\longrightarrow W$ in $\mathcal{A}$ factors through $i$. Thus, $f'u-vf:X\longrightarrow Y'$ factors through $i$, i.e., there is a morphism $\varphi: M \longrightarrow Y'$ such that $f'u-vf=\varphi\circ i$. 
Now consider the following diagram of $\mathbb{E}$-triangles in $\mathcal{A}$ (Note that the left hand parallelogram does not commute in $\mathcal{A}$, but it commutes in $\mathrm{Ho}(\mathcal{A})$):

$$\xymatrix@R=0.3cm@C=0.4cm{X \ar[rr]^{i} \ar[dr]^{u} \ar[dd]_<<{f} & & M \ar[rr]^{p} \ar[dr]^{k} \ar[dd]_<<{l} & & \Sigma X \ar@{-->}[r]^-{\delta} \ar[dr]^{\bar{u}} \ar@{=}[dd] & {} \\
& X' \ar[rr]^<<{i'} \ar[dd]_<<{f'} & & M' \ar[rr]^<<{p'} \ar[dd]_<<{l'} & & \Sigma X' \ar@{-->}[r]^-{\delta'} \ar@{=}[dd] & {} \\
Y \ar[rr]^<<{g} \ar[dr]^{v} & & C(f) \ar[rr]^<<{h}  & & \Sigma X \ar@{-->}[r]^{\varepsilon} \ar[dr]^{\bar{u}} & {} \\
& Y' \ar[rr]^{g'} & & C(f') \ar[rr]^{h'} & & \Sigma X' \ar@{-->}[r]^-{\varepsilon'} & {}}$$

\vskip5pt

\noindent where $\Sigma\gamma(u)=\gamma(\bar u)$, $f_*\delta=\varepsilon$, $f'_*\delta'=\varepsilon'$, $u_*\delta=\bar{u}^*\delta'$. Since $i_*\delta = 0$, one has
\begin{align*}
v_*\varepsilon & =v_*f_*\delta=(vf)_*\delta=(f'u-\varphi i)_*\delta \\
&=f'_*u_*\delta=f'_*\bar{u}^*\delta'=\bar{u}^*f'_*\delta' =\bar{u}^*\varepsilon'.
\end{align*}

\noindent By $(\mathrm{ET}2)$, there exists a morphism $w:C(f)\longrightarrow C(f')$ in $\mathcal{A}$ such that $wg=g'v$, \ $h'w=\bar u h$. Hence $$\gamma(w)\gamma(g)=\gamma(g')\gamma(v), \ \ \ \gamma(h')\gamma(w)=\gamma(\bar u)\gamma(h)=\Sigma\gamma(u)\gamma(h)=\Sigma u'\gamma(h).$$ Put $w'=\gamma(w): C(f)\longrightarrow C(f')$ in $\mathrm{Ho}(\mathcal{A})$. Then $$w'\gamma(g)=\gamma(g')v', \ \ \ \ \gamma(h')w'=\Sigma u'\gamma(h).$$ This completes the proof. \end{proof}

\vskip5pt

To prove \textbf{(TR4)}, we need the following two lemmas.

\begin{lem}\label{lem: TR4_1} \ Let $X\overset{f}{\longrightarrow}Y$ and $Y\overset{g}{\longrightarrow}Z$ be two morphisms in $\mathrm{Ho}(\mathcal{A})$. Then there exist cofibrations $f': X' \longrightarrow Y'$ and $g': Y' \longrightarrow Z'$ in $\mathcal{A}$, such that the following diagram
$$\xymatrix@R=0.4cm{X \ar[r]^{f} \ar[d]_{u} & Y \ar[r]^{g} \ar[d]^{v} & Z \ar[d]^{w} \\ X' \ar[r]^{\gamma(f')} & Y' \ar[r]^{\gamma(g')} & Z'}$$
commutes in $\mathrm{Ho}(\mathcal{A})$, where $u,\ v,\ w$ are isomorphisms in $\mathrm{Ho}(\mathcal{A})$.
\end{lem}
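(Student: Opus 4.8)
The plan is to reduce everything to the subcategory $\mathcal{C}\cap\mathcal{F}$, lift the two given morphisms to honest morphisms in $\mathcal{A}$, and then apply the factorization axiom (CM4) to replace each lifted morphism by a cofibration, taking care that the two cofibrations remain composable. First I would use the canonical isomorphisms $\theta_X: X\longrightarrow RQX$, $\theta_Y: Y\longrightarrow RQY$, $\theta_Z: Z\longrightarrow RQZ$ in $\mathrm{Ho}(\mathcal{A})$, where $RQX,\ RQY,\ RQZ\in\mathcal{C}\cap\mathcal{F}$. The conjugated morphisms $\theta_Y f\theta_X^{-1}: RQX\longrightarrow RQY$ and $\theta_Z g\theta_Y^{-1}: RQY\longrightarrow RQZ$ are morphisms of $\mathrm{Ho}(\mathcal{A})$ between cofibrant-fibrant objects, so by Corollary \ref{CcapF} they lift to morphisms $a: RQX\longrightarrow RQY$ and $b: RQY\longrightarrow RQZ$ in $\mathcal{A}$ with $\gamma(a)=\theta_Y f\theta_X^{-1}$ and $\gamma(b)=\theta_Z g\theta_Y^{-1}$.

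Next I would cofibrantly replace $a$ and $b$. Using (CM4), factor $a=p_1 j_1$ with $j_1: RQX\longrightarrow B_1$ a cofibration and $p_1: B_1\longrightarrow RQY$ a trivial fibration; then $\gamma(p_1)$ is an isomorphism in $\mathrm{Ho}(\mathcal{A})$. To keep the second morphism composable with the first, I would \emph{not} factor $b$ directly, but instead transport it across $p_1$ by forming $b p_1: B_1\longrightarrow RQZ$ and then factor $b p_1=p_2 j_2$ with $j_2: B_1\longrightarrow C_1$ a cofibration and $p_2: C_1\longrightarrow RQZ$ a trivial fibration. Setting $f'=j_1$ and $g'=j_2$ yields composable cofibrations $f': RQX\longrightarrow B_1$ and $g': B_1\longrightarrow C_1$ in $\mathcal{A}$, both sharing the intermediate object $B_1$.

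Finally I would take $X'=RQX$, $Y'=B_1$, $Z'=C_1$ together with the isomorphisms $u=\theta_X$, $v=\gamma(p_1)^{-1}\theta_Y$, $w=\gamma(p_2)^{-1}\theta_Z$, and verify that the resulting diagram commutes in $\mathrm{Ho}(\mathcal{A})$. The left square reduces to the identity $\gamma(j_1)\theta_X=\gamma(p_1)^{-1}\theta_Y f$, which follows from $a=p_1 j_1$ together with $\gamma(a)=\theta_Y f\theta_X^{-1}$; the right square reduces to $\gamma(j_2)\gamma(p_1)^{-1}=\gamma(p_2)^{-1}\gamma(b)$, which follows from $b p_1=p_2 j_2$ together with $\gamma(b)=\theta_Z g\theta_Y^{-1}$. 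These are routine manipulations in the hom-groups of $\mathrm{Ho}(\mathcal{A})$, and since $u,v,w$ are composites of isomorphisms they are themselves isomorphisms. The one point that demands genuine care --- and which I regard as the crux --- is precisely the ordering of the two factorizations: factoring $a$ and $b$ independently would produce cofibrations out of two \emph{different} replacements of $Y$, which need not share a common middle object and hence would fail to be composable; transporting $b$ across the weak equivalence $p_1$ before factoring is exactly what forces the source of $g'$ to coincide with the target of $f'$.
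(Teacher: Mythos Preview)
Your proposal is correct and follows essentially the same route as the paper: both lift $f$ and $g$ to morphisms between the cofibrant-fibrant replacements $RQX,\ RQY,\ RQZ$, factor the first as a cofibration followed by a trivial fibration, then factor the composite of the second with that trivial fibration, yielding composable cofibrations and the same vertical isomorphisms $\theta_X,\ \gamma(p_1)^{-1}\theta_Y,\ \gamma(p_2)^{-1}\theta_Z$. The only cosmetic difference is that the paper invokes Proposition~\ref{prop:canonical} (which packages the lifting step) where you cite Corollary~\ref{CcapF} directly.
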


\begin{proof}  \  By Proposition \ref{prop:canonical} there is a commutative diagram
\begin{equation}\label{eq: RQXYZ}
\begin{aligned}
\xymatrix@R=0.4cm{X \ar[r]^{f} \ar[d]_{\theta_X} & Y \ar[r]^{g} \ar[d]^{\theta_Y} & Z \ar[d]^{\theta_Z} \\ RQX \ar[r]^{\gamma(\tilde{f})} & RQY \ar[r]^{\gamma(\tilde{g})} & RQZ}
\end{aligned}
\end{equation}

\noindent  in $\mathrm{Ho}(\mathcal{A})$, where $\tilde{f}$ and $\tilde{g}$ are morphisms in $\mathcal{A}$, and $\theta_X$, $\theta_Y$, $\theta_Z$ are isomorphisms in $\mathrm{Ho}(\mathcal{A})$. 
By Factorization axiom, one has $\tilde{f} = p i$ with $i \in \mathrm{CoFib}$ and $p \in \mathrm{TFib}$. By Factorization axiom, one has $\tilde{g} p = p' i'$ with $i' \in \mathrm{CoFib}$ and $p' \in \mathrm{TFib}$. Then there is a commutative diagram in $\mathcal{A}$
\begin{equation}\label{eq: RQfactoriz}
\begin{aligned}
\xymatrix@R=0.4cm{RQX \ar[r]^{\tilde{f}} \ar@{=}[d] & RQY \ar[r]^{\tilde{g}} & RQZ \\ RQX \ar[r]^{i} & U \ar[u]_{p} \ar[r]^{i'} & V. \ar[u]_{p'}}
\end{aligned}
\end{equation}

\noindent Applying  $\gamma$ to the diagram (\ref{eq: RQfactoriz}) and  (\ref{eq: RQXYZ}) one gets the commutative diagram in $\Ho(\A)$:

$$\xymatrix@R=0.5cm{X \ar[r]^{f} \ar[d]_{\theta_X} & Y \ar[r]^{g} \ar[d]^{\theta_Y} & Z \ar[d]^{\theta_Z} \\ RQX \ar[r]^{\gamma(\tilde{f})} & RQY \ar[r]^{\gamma(\tilde{g})} & RQZ \\
RQX \ar[r]^{\gamma(i)} \ar@{=}[u] & U \ar[u]_{\gamma(p)} \ar[r]^{\gamma(i')} & V \ar[u]_{\gamma(p')}}
$$

\noindent  Composing the vertical morphisms, one has the following commutative diagram in $\mathrm{Ho}(\mathcal{A})$:
$$\xymatrix@R=0.7cm{X \ar[rr]^-f \ar[d]_-{\theta_X} && Y \ar[rr]^-g \ar[d]_-{\gamma(p)^{-1}\theta_Y} && Z \ar[d]^-{\gamma(p')^{-1}\theta_Z} \\ RQX \ar[rr]_{\gamma(i)} && U \ar[rr]_{\gamma(i')} && V}$$
where the vertical morphisms are isomorphisms in $\mathrm{Ho}(\mathcal{A})$,  and $i$ and $i'$ are cofibrations. \end{proof}

Let  $X\overset{f}{\longrightarrow}Y \overset{f'}{\longrightarrow} Z'\overset{\delta_f}{\dashrightarrow}$ be an $\mathbb{E}$-triangle with $f$ a cofibration. 
Take the suspension sequence $X \overset{i}{\longrightarrow} M \overset{p}{\longrightarrow} \Sigma X\overset{\delta_X}{\dashrightarrow}$ of $X$. Applying $\mathcal{A}(-,M)$ to  $X\overset{f}{\longrightarrow}Y \longrightarrow Z'\overset{\delta_f}{\dashrightarrow}$ yields an exact sequence
\[\xymatrix{ \mathcal{A}(Y,M) \ar[r]^-{\mathcal{A}(f,M)} & \mathcal{A}(X,M) \ar[r]^-{(\delta_f)^{\#}} & \mathbb{E}(Z',M)}\]

\noindent Since $Z'\in\mathcal C$ and $M\in\mathcal F\cap\mathcal W$,  $\mathbb{E}(Z',M)=0$.  Hence there is a morphism $k:Y\longrightarrow M$ such that $kf=i$. By (ET3) there is a morphism $l_f:Z'\longrightarrow \Sigma X$ such that the diagram 
$$\xymatrix@R=0.4cm{X \ar[r]^{f} & Y \ar[r]^{f'} \ar[d]_{k} & Z' \ar@{-->}[r]^{\delta_f} \ar[d]_{l_f} & {} \\ X \ar@{=}[u] \ar[r]^{i} & M \ar[r]^{p} & \Sigma X \ar@{-->}[r]^{\delta_X} & {}}$$
of $\mathbb{E}$-triangles commutes,  and $l_f^*\delta_X=\delta_f$.

\begin{lem}\label{lem: a morphism induced by cofib} \ Let  $X\overset{f}{\longrightarrow}Y \overset{f'}{\longrightarrow} Z'\overset{\delta_f}{\dashrightarrow}$ be an $\mathbb{E}$-triangle with $f$ a cofibration. Then there is an isomorphism of triangles in $\mathrm{Ho}(\mathcal{A})$
$$\xymatrix@R=0.5cm{X \ar[r]^-{\gamma(f)} \ar@{=}[d] & Y \ar[r]^-{\gamma(g)} \ar@{=}[d] & C(f) \ar[r]^-{\gamma(h)} \ar@{.>}[d]_{\gamma(t)} & \Sigma X \ar@{=}[d] \\ X \ar[r]^{\gamma(f)} & Y \ar[r]^{\gamma(f')} & Z' \ar[r]^{-\gamma(l_f)} & \Sigma X}$$
where the first row is the standard triangle induced by $f$, and $l_f:Z'\longrightarrow \Sigma X$ satisfies $l_f^* \delta_X = \delta_f$.
\end{lem}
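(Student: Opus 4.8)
The plan is to realize the cone $C(f)$ as the central object of a $3\times 3$ pushout diagram built from the suspension sequence of $X$ and the given $\mathbb{E}$-triangle, to extract the comparison morphism $t$ from that diagram, and then to check the two nontrivial squares in $\mathrm{Ho}(\mathcal{A})$. The left square is the identity and the right square is where the sign will appear, so that is the step I expect to require the most care.

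First I would apply Proposition \ref{prop: pushout version 1} to the two $\mathbb{E}$-triangles sharing the inflation domain $X$, namely the suspension sequence $X\overset{i}{\longrightarrow}M\overset{p}{\longrightarrow}\Sigma X\overset{\delta_X}{\dashrightarrow}$ and the given $X\overset{f}{\longrightarrow}Y\overset{f'}{\longrightarrow}Z'\overset{\delta_f}{\dashrightarrow}$. This produces an object $W$ with morphisms $d_1\colon Y\to W$, $d_2\colon M\to W$, $e_1\colon W\to\Sigma X$, $e_2\colon W\to Z'$ and extensions $\varepsilon_1,\varepsilon_2$ for which all rows and columns are $\mathbb{E}$-triangles. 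In our indexing this gives the row $Y\overset{d_1}{\longrightarrow}W\overset{e_1}{\longrightarrow}\Sigma X\overset{\varepsilon_1}{\dashrightarrow}$ with $\varepsilon_1=f_*\delta_X$, the column $M\overset{d_2}{\longrightarrow}W\overset{e_2}{\longrightarrow}Z'\overset{\varepsilon_2}{\dashrightarrow}$, the commutativity $e_2 d_1=f'$, and the crucial relation $(e_1)^*\delta_X+(e_2)^*\delta_f=0$.

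Next I compare this middle row with the defining $\mathbb{E}$-triangle $Y\overset{g}{\longrightarrow}C(f)\overset{h}{\longrightarrow}\Sigma X\overset{f_*\delta_X}{\dashrightarrow}$ of the standard triangle. Both realize the same $\mathbb{E}$-extension $f_*\delta_X$, so they are equivalent and there is an isomorphism $\varphi\colon C(f)\to W$ in $\mathcal{A}$ with $\varphi g=d_1$ and $e_1\varphi=h$. I then set $t:=e_2\varphi\colon C(f)\to Z'$. The middle square commutes already in $\mathcal{A}$, since $tg=e_2\varphi g=e_2 d_1=f'$, hence $\gamma(t)\gamma(g)=\gamma(f')$. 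Moreover the column $M\overset{d_2}{\longrightarrow}W\overset{e_2}{\longrightarrow}Z'\overset{\varepsilon_2}{\dashrightarrow}$ shows that $e_2$ is an $\mathbb{E}$-deflation with $\mathrm{CoCone}(e_2)=M\in\mathcal{F}\cap\mathcal{W}\subseteq\mathcal{W}$, so $e_2$ is a weak equivalence by Lemma \ref{lem: weq}$(2)$; therefore $\gamma(t)=\gamma(e_2)\gamma(\varphi)$ is an isomorphism in $\mathrm{Ho}(\mathcal{A})$.

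The main obstacle is the right-hand square, which carries the sign. Using $\delta_f=l_f^*\delta_X$ I rewrite $(e_2)^*\delta_f=(l_f e_2)^*\delta_X$, so the relation from the pushout becomes $(e_1+l_f e_2)^*\delta_X=0$. By Corollary \ref{cor: factorization}$(1)$ the morphism $e_1+l_f e_2\colon W\to\Sigma X$ then factors through $p\colon M\to\Sigma X$; since $M\in\mathcal{F}\cap\mathcal{W}$ is a zero object in $\mathrm{Ho}(\mathcal{A})$, this forces $\gamma(e_1)=-\gamma(l_f)\gamma(e_2)$. Composing on the right with $\gamma(\varphi)$ and using $h=e_1\varphi$ and $t=e_2\varphi$ gives $\gamma(h)=-\gamma(l_f)\gamma(t)$, which is exactly commutativity of the right square with the prescribed sign $-\gamma(l_f)$. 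Together with the identity left square and the invertibility of $\gamma(t)$, this produces the required isomorphism of triangles. The only genuinely delicate points are the identification $\varphi$ of $C(f)$ with $W$ through the common extension $f_*\delta_X$ and the careful bookkeeping of $(e_1)^*\delta_X+(e_2)^*\delta_f=0$ under the substitution $\delta_f=l_f^*\delta_X$; once these are in place the conclusion is immediate.
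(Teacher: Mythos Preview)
Your proof is correct and follows essentially the same route as the paper: apply Proposition~\ref{prop: pushout version 1} to the suspension sequence and the given $\mathbb{E}$-triangle, read off the relation $(e_1)^*\delta_X+(e_2)^*\delta_f=0$, substitute $\delta_f=l_f^*\delta_X$, and conclude that $e_1+l_fe_2$ factors through $M\in\mathcal{F}\cap\mathcal{W}$. The only cosmetic difference is that the paper takes the pushout object to \emph{be} $C(f)$ (since the middle row realizes $f_*\delta_X$, which is the defining extension of $C(f)$), so the morphism $t$ appears directly in the $3\times3$ diagram without the intermediate isomorphism $\varphi\colon C(f)\to W$; your explicit identification via $\varphi$ is arguably more careful but not logically necessary.
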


\begin{proof} \  By Proposition \ref{prop: pushout version 1} there is a commutative diagram of $\mathbb{E}$-triangles in $\mathcal{A}$
$$\xymatrix@R=0.4cm{X \ar[r]^{i} \ar[d]_f & M \ar[r]^{p} \ar@{.>}[d] & \Sigma X \ar@{-->}[r]^{\delta_X} \ar@{=}[d] & {} \\
Y \ar@{.>}[r]^-g \ar[d]_{f'} & C(f) \ar@{.>}[r]^{h} \ar@{.>}[d]_{t} & \Sigma X \ar@{-->}[r] & {} \\
Z' \ar@{=}[r] \ar@{-->}[d]_{\delta_f} & Z' \ar@{-->}[d] & & \\
{} & {}  & & }$$

\noindent with $t^*\delta_{f}+h^*\delta_X=0$. Since $t$ is an $\mathbb{E}$-deflation and $\mathrm{CoCone}(t)=M\in \mathcal{F}\cap \mathcal{W}$, one has $t\in \mathrm{TFib}$, and hence $\gamma(t)$ is an isomorphism in $\mathrm{Ho}(\mathcal{A})$. We claim $\gamma(-l_ft)=\gamma(h)$ in $\mathrm{Ho}(\mathcal{A})$. In fact, applying $\mathcal{A}(C(f),-)$ to $X \longrightarrow M \longrightarrow \Sigma X\overset{\delta_X}{\dashrightarrow}$ yields an exact sequence
\[\xymatrix{ \mathcal{A}(C(f),M) \ar[r]^{\mathcal{A}(C(f),p)} & \mathcal{A}(C(f),\Sigma X) \ar[r]^{(\delta_X)_{\#}} &  \mathbb{E}(C(f),X)}\]
Since $$(\delta_X)_{\#}(l_ft+h)=(l_ft+h)^*\delta_X=t^*(l_f)^*\delta_X+h^*\delta_X=t^*\delta_f+h^*\delta_X=0$$
$l_ft+h: C(f)\longrightarrow \Sigma X$ factors through $M\in \mathcal{F}\cap \mathcal{W}$. Since $M = 0$ in  $\mathrm{Ho}(\mathcal{A})$, $\gamma(-l_ft)=\gamma(h)$ in $\mathrm{Ho}(\mathcal{A})$. This justifies the claim, and completes the proof.  
\end{proof}

To verify $(\mathrm{TR}4)$, it suffices to consider the standard triangles in $\rm Ho(\mathcal A)$. By Lemmas \ref{lem: TR4_1} and \ref{lem: a morphism induced by cofib}, it suffices to consider the triangles of the form
$\xymatrix@R=0.4cm{X \ar[r]^{\gamma(f)} & Y \ar[r]^{\gamma(f')} & Z' \ar[r]^{-\gamma(l_f)} & \Sigma X}$
with $f\in \mathrm{CoFib}$. That is, it suffices to prove the following lemma.

\begin{lem}\label{lem: TR4strict} \ Let $X\overset{f}{\longrightarrow }Y \overset{i}{\longrightarrow} Z'\overset{\delta_f}{\dashrightarrow}$, \ $Y\overset{g}{\longrightarrow }Z \overset{j}{\longrightarrow} X'\overset{\delta_g}{\dashrightarrow}$, \ $X\overset{h}{\longrightarrow }Z\overset{k}{\longrightarrow} Y'\overset{\delta_h}{\dashrightarrow}$ be $\mathbb{E}$-triangles with $h=gf$, such that $f$ and $g$ are cofibrations. Suppose that $l_f:Z'\longrightarrow \Sigma X,\ l_g: X'\longrightarrow \Sigma Y,\ l_h:Y'\longrightarrow \Sigma X$ are morphisms in $\mathcal{A}$ with $$l_f^*\delta_X=\delta_f, \ \ \ l_g^*\delta_Y=\delta_g, \ \ \ l_h^*\delta_X=\delta_h.$$
Then there exist morphisms $\alpha'$ and $\beta'$ in $\mathrm{Ho}(\mathcal{A})$, such that the third column in the diagram below is in $\Delta$
$$\xymatrix@R=0.7cm{X \ar[r]^{\gamma(f)} \ar@{=}[d] & Y \ar[r]^{\gamma(i)} \ar[d]_{\gamma(g)} & Z' \ar[r]^{-\gamma(l_f)} \ar@{.>}[d]_{\alpha'} & \Sigma X \ar@{=}[d] \\
X \ar[r]^{\gamma(h)} & Z \ar[r]^{\gamma(k)} \ar[d]_{\gamma(j)} & Y' \ar[r]^{-\gamma(l_h)} \ar@{.>}[d]_{\beta'} & \Sigma X \ar[d]^{\Sigma \gamma(f)} \\
& X' \ar@{=}[r] \ar[d]_-{-\gamma(l_g)}& X' \ar[r]^{-\gamma(l_g)} \ar[d]^{(\Sigma \gamma(i))(-\gamma(l_g))} & \Sigma Y \\
& \Sigma Y \ar[r]^{\Sigma \gamma(i)} & \Sigma Z' & }$$
and the diagram commutes in $\mathrm{Ho}(\mathcal{A})$.
\end{lem}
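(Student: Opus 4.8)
The plan is to produce $\alpha'$ and $\beta'$ as the images under $\gamma$ of the morphisms supplied by the octahedral axiom in $\mathcal{A}$, and then to reduce every non-formal commutativity check to a vanishing of an $\mathbb{E}$-extension via the ``factorization through a trivial object'' device. First I would apply Proposition \ref{prop: ET4 version homotopy} to the three given $\mathbb{E}$-triangles $X\overset{f}{\to}Y\overset{i}{\to}Z'\overset{\delta_f}{\dashrightarrow}$, $Y\overset{g}{\to}Z\overset{j}{\to}X'\overset{\delta_g}{\dashrightarrow}$ and $X\overset{h}{\to}Z\overset{k}{\to}Y'\overset{\delta_h}{\dashrightarrow}$ with $h=gf$. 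This yields morphisms $d:Z'\to Y'$ and $e:Y'\to X'$ in $\mathcal{A}$, together with an $\mathbb{E}$-triangle $Z'\overset{d}{\to}Y'\overset{e}{\to}X'\overset{\eta}{\dashrightarrow}$, satisfying $di=kg$, $ek=j$, $d^*\delta_h=\delta_f$, $\eta=i_*\delta_g$ and $f_*\delta_h=e^*\delta_g$. I then set $\alpha':=\gamma(d)$ and $\beta':=\gamma(e)$.

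Next I would verify the commutativity of the whole diagram in $\mathrm{Ho}(\mathcal{A})$. The squares not involving the connecting morphisms follow at once from $gf=h$, $di=kg$ and $ek=j$, and the bottom-right square is merely the definition of the composite $(\Sigma\gamma(i))(-\gamma(l_g))$. For the two remaining squares I use the key device: whenever $\phi:A\to\Sigma B$ satisfies $\phi^*\delta_B=0$ for the suspension extension $\delta_B$ of $B$, then by Corollary \ref{cor: factorization} the map $\phi$ factors through the middle term $M_B\in\mathcal{F}\cap\mathcal{W}$ of the suspension sequence, and since $M_B\cong 0$ in $\mathrm{Ho}(\mathcal{A})$ one gets $\gamma(\phi)=0$. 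Applying this with $\phi=l_hd-l_f$, using $(l_hd)^*\delta_X=d^*\delta_h=\delta_f=l_f^*\delta_X$, gives $\gamma(l_f)=\gamma(l_hd)$, which is the top-right square; and applying it with $\phi=(\Sigma f)l_h-l_ge$, using $(\Sigma f)^*\delta_Y=f_*\delta_X$, bifunctoriality $l_h^*f_*=f_*l_h^*$, and $f_*\delta_h=e^*\delta_g$ to get $((\Sigma f)l_h)^*\delta_Y=f_*\delta_h=e^*\delta_g=(l_ge)^*\delta_Y$, gives $\gamma((\Sigma f)l_h)=\gamma(l_ge)$, which is the middle-right square.

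Then I would show that the third column lies in $\Delta$. Since $g$ is a cofibration we have $X'=\mathrm{Cone}(g)\in\mathcal{C}$; as $\mathrm{Cone}(d)=X'$ and $d$ is an $\mathbb{E}$-inflation, $d$ is itself a cofibration. Hence Lemma \ref{lem: a morphism induced by cofib} applies to $Z'\overset{d}{\to}Y'\overset{e}{\to}X'\overset{\eta}{\dashrightarrow}$ and produces a morphism $l_d:X'\to\Sigma Z'$ with $l_d^*\delta_{Z'}=\eta$ such that $Z'\overset{\gamma(d)}{\to}Y'\overset{\gamma(e)}{\to}X'\overset{-\gamma(l_d)}{\to}\Sigma Z'$ is isomorphic to the standard triangle induced by $d$, hence in $\Delta$. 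It remains to match the connecting morphism: using $(\Sigma i)^*\delta_{Z'}=i_*\delta_Y$ and $\eta=i_*\delta_g$ one computes $((\Sigma i)l_g)^*\delta_{Z'}=i_*(l_g^*\delta_Y)=i_*\delta_g=\eta=l_d^*\delta_{Z'}$, so the same factorization device yields $\gamma(l_d)=\gamma((\Sigma i)l_g)$ and therefore $-\gamma(l_d)=(\Sigma\gamma(i))(-\gamma(l_g))$. Thus the third column is exactly the standard triangle induced by $d$ and lies in $\Delta$, with $\alpha'=\gamma(d)$, $\beta'=\gamma(e)$.

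I expect the main obstacle to be the careful, sign-sensitive bookkeeping of the $\mathbb{E}$-extension identities: one must repeatedly invoke bifunctoriality $g^*f_*=f_*g^*$ and the defining relations $(\Sigma f)^*\delta_Y=f_*\delta_X$ to reduce each non-formal square to a vanishing of the shape $\phi^*\delta_{(-)}=0$. Once that reduction is in place, the passage to $\mathrm{Ho}(\mathcal{A})$ through Corollary \ref{cor: factorization} and the triviality of the suspension middle terms is routine, and the identification of the base triangle with a standard triangle via Lemma \ref{lem: a morphism induced by cofib} is immediate.
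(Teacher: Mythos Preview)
Your proposal is correct and follows essentially the same approach as the paper: apply the $(\mathrm{ET4})$-type axiom to obtain $d,e$ (the paper's $\alpha,\beta$), note that $d$ is a cofibration since $X'\in\mathcal{C}$, invoke Lemma~\ref{lem: a morphism induced by cofib} to place the third column in $\Delta$, and reduce every remaining commutativity to a difference $\phi$ with $\phi^*\delta_{(-)}=0$ so that $\phi$ factors through the trivial middle term of a suspension sequence. The only cosmetic difference is that you cite Proposition~\ref{prop: ET4 version homotopy} rather than $(\mathrm{ET4})$ itself, but you use exactly the same output data.
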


\begin{proof} \ Since $f,\ g$ are $\mathbb{E}$-inflations, by $(\mathrm{ET}4)$ there exist morphisms  $\alpha$, $\beta$ in $\mathcal A$ such that the following diagram of $\mathbb{E}$-triangles commutes in $\mathcal A$
$$\xymatrix@R=0.6cm{X \ar[r]^{f} \ar@{=}[d] & Y \ar[r]^{i} \ar[d]_{g} & Z' \ar@{-->}[r]^{\delta_f} \ar@{.>}[d]_{\alpha} & {} \\
X \ar[r]^{h} & Z \ar[r]^{k} \ar[d]_{j} & Y' \ar@{-->}[r]^{\delta_h} \ar@{.>}[d]_{\beta} & {} \\
& X' \ar@{=}[r] \ar@{-->}[d]_{\delta_g} & X' \ar@{-->}[d]_{\delta_{\alpha}} & \\
& {}  & {} & }$$

\noindent where the third column is an $\mathbb{E}$-triangle, and $\alpha^*\delta_h=\delta_f$, $i_*\delta_g=\delta_{\alpha}$, $f_*\delta_h=\beta^*\delta_g$. Since $g\in \mathrm{CoFib}$, one has $X'\in \mathcal{C}$, and hence $\alpha\in \mathrm{CoFib}$. By Lemma \ref{lem: a morphism induced by cofib} there is a morphism $l_{\alpha}:X'\longrightarrow\Sigma Z'$ such that $l_{\alpha}^*\delta_{Z'}=\delta_{\alpha}$ and the third column in following diagram is in $\Delta$.
$$\xymatrix@R=0.8cm{X \ar[r]^{\gamma(f)} \ar@{=}[d] & Y \ar[r]^{\gamma(i)} \ar[d]_{\gamma(g)} & Z' \ar[r]^{-\gamma(l_f)} \ar@{.>}[d]_{\gamma(\alpha)} & \Sigma X \ar@{=}[d] \\
X \ar[r]^{\gamma(h)} & Z \ar[r]^{\gamma(k)} \ar[d]_{\gamma(j)} & Y' \ar[r]^{-\gamma(l_h)} \ar@{.>}[d]_{\gamma(\beta)} & \Sigma X \ar[d]^{\Sigma \gamma(f)} \\
& X' \ar@{=}[r] \ar[d]_-{-\gamma(l_g)}& X' \ar[r]^{-\gamma(l_g)} \ar[d]^{-\gamma(l_{\alpha})} & \Sigma Y \\
& \Sigma Y \ar[r]^{\Sigma \gamma(i)} & \Sigma Z' & }$$

It remains to prove that the diagram commutes in $\mathrm{Ho}(\mathcal{A})$. It is clear that $\alpha i=kg$ and $\beta k=j$.

\vskip5pt

We claim $ -\gamma(l_h)\gamma(\alpha)=-\gamma(l_f)$.
In fact, applying $\mathcal{A}(Z',-)$ to the suspension sequence $X \longrightarrow M_X  \longrightarrow \Sigma X\overset{\delta_X}{\dashrightarrow}$, one gets an exact sequence
\[\xymatrix{\mathcal{A}(Z',M_X) \ar[r] & \mathcal{A}(Z',\Sigma X) \ar[r]^{(\delta_X)_{\#}} & \mathbb{E}(Z',X)}\]
Since $$(\delta_X)_{\#}(l_h\alpha-l_f) =\left(l_h \alpha-l_f\right)^*\delta_X=\alpha^*l_h^*\delta_X-l_f^*\delta_X=\alpha^*\delta_h-\delta_f=0$$ 
it follows that $l_h \alpha-l_f$ factors through $M_X\in \mathcal{F}\cap \mathcal{W}$,  and hence $ -\gamma(l_h)\gamma(\alpha)=-\gamma(l_f)$ in  $\mathrm{Ho}(\mathcal{A})$.

\vskip5pt

Next, we claim  $-\gamma(l_{\alpha})=-(\Sigma \gamma(i))\gamma(l_g)$. In fact, applying $\mathcal{A}(X',-)$ to the suspension sequence $Z' \longrightarrow M_{Z'}  \longrightarrow \Sigma Z'\overset{\delta_{Z'}}{\dashrightarrow}$, one gets an exact sequence
\[\xymatrix{\mathcal{A}(X',M_{Z'}) \ar[r] & \mathcal{A}(X',\Sigma Z') \ar[r]^{(\delta_{Z'})_{\#}} & \mathbb{E}(X',Z')}\]
Since
\begin{align*}
(\delta_{Z'})_{\#}\left( l_{\alpha}-(\Sigma i)l_g \right)&=\left( l_{\alpha}-(\Sigma i)l_g \right)^*\delta_{Z'}=l_{\alpha}^*\delta_{Z'}-l_g^*(\Sigma i)^*\delta_{Z'}\\
&=\delta_{\alpha}-l_g^*i_*\delta_Y=\delta_{\alpha}-i_*l_g^*\delta_Y  =\delta_{\alpha}-i_*\delta_g=0
\end{align*}
then $l_{\alpha}-(\Sigma i)l_g$ factors through $M_{Z'}\in \mathcal{F}\cap \mathcal{W}$, and hence $-\gamma(l_{\alpha})=-(\Sigma \gamma(i))\gamma(l_g)$ in $\mathrm{Ho}(\mathcal{A})$ .

\vskip5pt

It remains to prove that  $-(\Sigma \gamma(f))\gamma(l_{h})=-\gamma(l_{g})\gamma(\beta)$.
Applying $\mathcal{A}(Y',-)$ to the suspension sequence $Y \longrightarrow M_Y  \longrightarrow \Sigma Y\overset{\delta_Y}{\dashrightarrow}$, one gets an exact sequence
\[\xymatrix{\mathcal{A}(Y',M_Y) \ar[r] & \mathcal{A}(Y',\Sigma Y) \ar[r]^{(\delta_Y)_{\#}} & \mathbb{E}(Y',Y)}\]
Since
\begin{align*}
(\delta_Y)_{\#}((\Sigma f)l_h-l_g \beta)&=\left( (\Sigma f)l_h-l_g \beta \right)^*\delta_Y=l_h^*(\Sigma f)^*\delta_Y-\beta^*l_g^*\delta_Y\\
& =l_h^*f_*\delta_X-\beta^*\delta_g=f_*l_h^*\delta_X-\beta^*\delta_g=f_*\delta_h-\beta^*\delta_g=0
\end{align*}
then $(\Sigma f)l_h-l_g\beta$ factors through $M_Y\in \mathcal{F}\cap \mathcal{W}$, and hence $-(\Sigma \gamma(f))\gamma(l_{h})=-\gamma(l_{g})\gamma(\beta)$ in $\mathrm{Ho}(\mathcal{A})$.
\end{proof}
This completes the proof of Theorem \ref{thm: main}.

\subsection{Relationship between the two classes of distinguished triangles} \ In this subsection, we show that the class $\Delta$ of distinguished triangles in \ref{def: disinguishedtriangle} and the class $\widetilde{\Delta}$ of distinguished triangles in Nakaoka and Palu \cite[Section 6.3]{NP} admit the relation $\widetilde{\Delta}=-\Delta$. That is, the triangle $ X\overset{f}{\longrightarrow}Y\overset{g}{\longrightarrow}Z\overset{h}{\longrightarrow}\Sigma X$ is in $\widetilde{\Delta}$ if and only if the triangle $ X\overset{-f}{\longrightarrow}Y\overset{-g}{\longrightarrow}Z\overset{-h}{\longrightarrow}\Sigma X$ is in $\Delta$, and hence there is a triangle-isomorphism $(\mathrm{Ho}(\mathcal{A}), \Sigma, \Delta) \cong (\mathrm{Ho}(\mathcal{A}), \Sigma, \widetilde{\Delta})$.

\vskip5pt

Let $(\mathrm{CoFib},\ \mathrm{Fib},\ \mathrm{Weq})$ be an admissible model structure on a weakly idempotent complete extriangulated category $(\mathcal{A},\ \mathbb{E},\ \mathfrak{s})$, and $\Sigma$ the suspension functor. Recall the definition of the class $\widetilde{\Delta}$. A triangle $X\overset{\gamma(f)}{\longrightarrow}Y\overset{\gamma(g)}{\longrightarrow}Z\overset{l(\delta)}{\longrightarrow}\Sigma X$ in $\mathrm{Ho}(\mathcal{A})$ is a {\it standard triangle}, if $X\overset{f}{\longrightarrow}Y\overset{g}{\longrightarrow}Z\overset{\delta}{\dashrightarrow}$ is an $\mathbb{E}$-triangle in $\mathcal{A}$, and $l(\delta):Z \longrightarrow  \Sigma X$ is the ``connecting morphism'' defined as follows. 
For morphisms $Z \overset{w}{\longleftarrow}D\overset{d}{\longrightarrow}\Sigma X$ in $\mathcal{A}$ satisfying 
$$w\in \mathrm{TFib}, \  \  w^*\delta=d^*\delta_X.$$
Define $l(\delta):=\gamma(d)\circ \gamma(w)^{-1}: Z\longrightarrow \Sigma X$  (see \cite[Definition 6.9]{NP}). Denote $\widetilde{\Delta}$ by the class of triangles  isomorphic to standard triangles in $\Ho(\mathcal{A})$.

\vskip5pt

Suppose that $(\mathcal{C}, [1], \mathcal{E})$ is a triangulated category. Let $-\mathcal{E}:=\{X\overset{-f}{\longrightarrow}Y\overset{-g}{\longrightarrow}Z\overset{-h}{\longrightarrow}X[1] \mid X\overset{f}{\longrightarrow}Y\overset{g}{\longrightarrow}Z\overset{h}{\longrightarrow}X[1]\in \mathcal{E}\}$. It follows from the axioms (TR1), (TR2), (TR3), and (TR4) that $(\mathcal{C}, [1], -\mathcal{E})$ is also a triangulated category, and there is a triangle isomorphism
$$(\mathrm{Id}_{\mathcal{C}}, -\mathrm{Id}_{[1]}):(\mathcal{C}, [1], \mathcal{E}) \cong (\mathcal{C}, [1], -\mathcal{E})$$
where $-\mathrm{Id}_{[1]}:[1]\longrightarrow[1]$ is the natural isomorphism: for any $X\in \mathcal{C}$, $-\mathrm{Id}_{[1]}(X) = -\mathrm{Id}_{X[1]}.$

\vskip5pt

However, it is well-known that $-\mathcal{E} \ne \mathcal{E}$ in general. In fact, there exists examples where $X\overset{f}{\longrightarrow}Y\overset{g}{\longrightarrow}Z\overset{h}{\longrightarrow}X[1]$ is in $\mathcal E$ but $X\overset{-f}{\longrightarrow}Y\overset{-g}{\longrightarrow}Z\overset{-h}{\longrightarrow} X[1]$ is \textbf{not} in $\mathcal E$ (\cite[Example 2.5.5]{ZP}).

\begin{thm}\label{thm: tri-relation}\ Let $(\mathrm{CoFib},\ \mathrm{Fib},\ \mathrm{Weq})$ be an admissible model structure on a weakly idempotent complete extriangulated category $(\mathcal{A},\ \mathbb{E},\ \mathfrak{s})$, 
and $\Sigma$ the suspension functor. Suppose that $\Delta$ is the class of triangles defined in $\ref{def: disinguishedtriangle}$, and $\widetilde{\Delta}$ is the class of triangles defined in \cite{NP}. Then $\widetilde{\Delta}=-\Delta$, and hence 
there is a triangle-isomorphism 
$$(\mathrm{Id}_{\mathrm{Ho}(\mathcal{A})}, -\mathrm{Id}_{\Sigma}):(\mathrm{Ho}(\mathcal{A}), \ \Sigma, \ \Delta) \cong (\mathrm{Ho}(\mathcal{A}), \ \Sigma, \ \widetilde{\Delta}).$$
\end{thm}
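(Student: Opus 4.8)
The plan is to derive $\widetilde{\Delta}=-\Delta$ from the single inclusion $-\Delta\subseteq\widetilde{\Delta}$, using the general principle that two comparable triangulations of the same pair $(\mathrm{Ho}(\mathcal{A}),\Sigma)$ must coincide. Recall that $(\mathrm{Ho}(\mathcal{A}),\Sigma,\Delta)$ is triangulated by Theorem \ref{thm: main}, so by the discussion preceding the statement $(\mathrm{Ho}(\mathcal{A}),\Sigma,-\Delta)$ is triangulated as well, while $(\mathrm{Ho}(\mathcal{A}),\Sigma,\widetilde{\Delta})$ is triangulated by \cite{NP}. The principle reads: if $\mathcal{E}_1\subseteq\mathcal{E}_2$ are two classes each making $(\mathrm{Ho}(\mathcal{A}),\Sigma)$ triangulated, then $\mathcal{E}_1=\mathcal{E}_2$. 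Indeed, given a triangle of $\mathcal{E}_2$ with first morphism $a$, one completes $a$ to a triangle of $\mathcal{E}_1\subseteq\mathcal{E}_2$ by (TR1) for $\mathcal{E}_1$; the two $\mathcal{E}_2$-triangles sharing the first morphism $a$ are isomorphic by (TR3) together with the five-lemma in $(\mathrm{Ho}(\mathcal{A}),\Sigma,\mathcal{E}_2)$ (the triangulated analogue of Corollary \ref{cor:iso 2 of 3}); and since $\mathcal{E}_1$ is closed under isomorphism, the given triangle lies in $\mathcal{E}_1$. Applying this with $\mathcal{E}_1=-\Delta$ and $\mathcal{E}_2=\widetilde{\Delta}$ reduces the whole theorem to proving $-\Delta\subseteq\widetilde{\Delta}$.

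Since $\widetilde{\Delta}$ is closed under isomorphism and $-\Delta$ is the isomorphism-closure of the negatives of the standard triangles of $\Delta$, it suffices to show that $-D\in\widetilde{\Delta}$ for each \emph{standard} triangle $D$ of $\Delta$. First I would reduce to the cofibration case. Every standard triangle of $\Delta$ is the cone triangle $D(f_0)$ of a morphism $f_0$ in $\mathcal{A}$ (Definition \ref{def: disinguishedtriangle}); by Proposition \ref{prop:canonical} the morphism $\gamma(f_0)$ is isomorphic in $\mathrm{Ho}(\mathcal{A})$ to $\gamma(\tilde f_0)$ with $\tilde f_0\colon RQX\to RQY$, and by the Factorization axiom $\tilde f_0=p\,i$ with $i\in\mathrm{CoFib}$ and $p\in\mathrm{TFib}$, so $\gamma(p)$ is an isomorphism and $\gamma(f_0)$ is isomorphic as a morphism to $\gamma(i)$ for a cofibration $i\colon X\to Y$. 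Because isomorphic morphisms have isomorphic cones in the triangulated category $(\mathrm{Ho}(\mathcal{A}),\Sigma,\Delta)$ (this is Lemma \ref{lem: TR3}), we get $D(f_0)\cong D(i)$. Hence it is enough to treat $D(i)$ for a cofibration $i$, fitting into an $\mathbb{E}$-triangle $X\overset{i}{\longrightarrow}Y\overset{i'}{\longrightarrow}Z'\overset{\delta_i}{\dashrightarrow}$ with $Z'\in\mathcal{C}$.

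Now I would identify the two descriptions of the third morphism. By Lemma \ref{lem: a morphism induced by cofib} there is a morphism $l_i\colon Z'\to\Sigma X$ with $l_i^{*}\delta_X=\delta_i$ and a triangle isomorphism $D(i)\cong(\gamma(i),\gamma(i'),-\gamma(l_i))$, where I write $(\alpha,\beta,\varepsilon)$ for the triangle $X\overset{\alpha}{\longrightarrow}Y\overset{\beta}{\longrightarrow}Z'\overset{\varepsilon}{\longrightarrow}\Sigma X$. On the other hand, the Nakaoka--Palu standard triangle attached to the same $\mathbb{E}$-triangle is $(\gamma(i),\gamma(i'),l(\delta_i))\in\widetilde{\Delta}$. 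The key observation is that $l(\delta_i)=\gamma(l_i)$: in the definition of the connecting morphism via \cite[Definition 6.9]{NP} one may take $D=Z'$, $w=\mathrm{Id}_{Z'}$ and $d=l_i$, since $\mathrm{Id}_{Z'}\in\mathrm{TFib}$ (its cocone is the zero object, which lies in $\mathcal{F}\cap\mathcal{W}$) and $\mathrm{Id}_{Z'}^{*}\delta_i=\delta_i=l_i^{*}\delta_X$; choice-independence of $l(\delta_i)$ then forces $l(\delta_i)=\gamma(l_i)\circ\gamma(\mathrm{Id}_{Z'})^{-1}=\gamma(l_i)$. Finally, the elementary triangle isomorphism $(-\mathrm{Id},\mathrm{Id},-\mathrm{Id})$ gives $(\gamma(i),\gamma(i'),\gamma(l_i))\cong(-\gamma(i),-\gamma(i'),\gamma(l_i))$, so that $-D(i)\cong(-\gamma(i),-\gamma(i'),\gamma(l_i))\cong(\gamma(i),\gamma(i'),l(\delta_i))\in\widetilde{\Delta}$. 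This proves $-\Delta\subseteq\widetilde{\Delta}$, hence $\widetilde{\Delta}=-\Delta$ by the first paragraph, and the triangle-isomorphism $(\mathrm{Id}_{\mathrm{Ho}(\mathcal{A})},-\mathrm{Id}_{\Sigma})$ is exactly the one recorded before the statement.

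The main obstacle I anticipate is the identification $l(\delta_i)=\gamma(l_i)$: it rests on the well-definedness (choice-independence) of the Nakaoka--Palu connecting morphism, which must be cited precisely from \cite{NP}, and on the verification that $\mathrm{Id}_{Z'}$ together with $l_i$ is an admissible choice of data. The remaining ingredients — the comparison principle for triangulations and the sign bookkeeping through the isomorphism $(-\mathrm{Id},\mathrm{Id},-\mathrm{Id})$ — are routine once this identification is secured.
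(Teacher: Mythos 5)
Your proposal is correct, but it follows a genuinely different route from the paper's proof. The paper establishes both inclusions directly and constructively: for one direction it applies Proposition \ref{prop: pushout mid complete homotopy square} to the standard triangle induced by $f$, producing an $\mathbb{E}$-triangle $X\longrightarrow Y\oplus M\longrightarrow C(f)\overset{h^*\delta_X}{\dashrightarrow}$ whose Nakaoka--Palu connecting morphism is $l(h^*\delta_X)=\gamma(h)$ (the same ``take $w=\mathrm{Id}$'' trick underlying your identification $l(\delta_i)=\gamma(l_i)$), and then uses $M\cong 0$ in $\mathrm{Ho}(\mathcal{A})$; for the converse it applies Proposition \ref{prop: pushout version 1} to an arbitrary $\mathbb{E}$-triangle and reads off $l(\delta)=-\gamma(t)\circ\gamma(e)^{-1}$. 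You instead prove only $-\Delta\subseteq\widetilde{\Delta}$ (after reducing to cofibrations and invoking Lemma \ref{lem: a morphism induced by cofib}) and deduce equality from the standard principle that nested triangulations of the same pair $(\mathrm{Ho}(\mathcal{A}),\Sigma)$ coincide. This is sound, and your single inclusion nicely reuses machinery already developed for Theorem \ref{thm: main} (Proposition \ref{prop:canonical}, factorization, Lemma \ref{lem: a morphism induced by cofib}), but two trade-offs are worth recording. First, the comparison principle needs $(\mathrm{Ho}(\mathcal{A}),\Sigma,\widetilde{\Delta})$ to be triangulated, i.e., it consumes \cite[Theorem 6.20]{NP} as an input, whereas the paper's proof uses only \cite[Definition 6.9]{NP} (the well-definedness of the connecting morphism); consequently the paper's version of Theorem \ref{thm: tri-relation}, combined with Theorem \ref{thm: main}, re-derives Nakaoka--Palu's theorem---which is the stated purpose of the paper---while your version cannot play that role, although it does prove the statement as literally formulated. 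Second, your parenthetical appeal to Lemma \ref{lem: TR3} for ``isomorphic morphisms have isomorphic cones'' is imprecise: that lemma only treats standard triangles whose end terms lie in $\mathcal{C}\cap\mathcal{F}$, and the source triangle $D(f_0)$ has arbitrary end terms; what you actually need is the full axiom (TR3) for $\Delta$, available from Theorem \ref{thm: main}, together with the five lemma in triangulated categories. This is a citation slip rather than a gap.
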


\begin{proof} \ Let $ X'\overset{f'}{\longrightarrow}Y'\overset{g'}{\longrightarrow}Z'\overset{h'}{\longrightarrow}\Sigma X'$ be a triangle in $\Delta$. Then it is isomorphic to the standard triangle $X\overset{\gamma(f)}{\longrightarrow}Y\overset{\gamma(g)}{\longrightarrow}C(f)\overset{\gamma(h)}{\longrightarrow}\Sigma X$, where $Y\overset{g}{\longrightarrow}C(f)\overset{h}{\longrightarrow}\Sigma X \overset{f_*\delta_X}{\dashrightarrow}$ is an $\mathbb{E}$-triangle in $\mathcal{A}$. Consider the following diagram of morphisms in $\mathcal{A}$:
\[\xymatrix@R=0.4cm{X \ar[r]^i \ar[d]_f & M\ar[r]^p & \Sigma X \ar@{-->}[r]^-{\delta_X} \ar@{=}[d] & \\
Y \ar[r]^g & C(f) \ar[r]^h & \Sigma X \ar@{-->}[r]^-{f_*\delta_X} & }\]

\noindent where the first row is the suspension sequence of $X$. By Proposition \ref{prop: pushout mid complete homotopy square} there is a morphism $t:M\longrightarrow C(f)$ such that the above diagram commutes and the left square is homotopy cartesian, that is,
\[\xymatrix{X\ar[r]^-{\left(\begin{smallmatrix} -f \\ i \\\end{smallmatrix}\right)} & Y\oplus M \ar[r]^-{\left(\begin{smallmatrix} g,\ t\\\end{smallmatrix}\right)} & C(f) \ar@{-->}[r]^-{h^*\delta_X} & } \]
is an $\mathbb{E}$-triangle. It follows from the definition of the ``connecting morphism'' (see Nakaoka - Palu \cite[Definition 6.9]{NP}) that $l(h^*\delta_X)=\gamma(h)$, and hence
\[\xymatrix{X\ar[r]^-{\gamma\left(\begin{smallmatrix} -f \\ i \\\end{smallmatrix}\right)} & Y\oplus M \ar[r]^-{\gamma\left(\begin{smallmatrix} g,\ t\\\end{smallmatrix}\right)} & C(f) \ar[r]^-{\gamma(h)} & \Sigma X} \]

\noindent  is a standard triangle in $\mathrm{Ho}(\mathcal{A})$ in the sense of Nakaoka - Palu \cite{NP}. Since $M\in \mathcal{F}\cap \mathcal{W}$ is a zero object in $\mathrm{Ho}(\mathcal{A})$, this standard triangle is isomorphic to
\[\xymatrix{X \ar[r]^-{-\gamma(f)} & Y \ar[r]^-{\gamma(g)} & C(f) \ar[r]^-{\gamma(h)} & \Sigma X.}\]
Since $\widetilde{\Delta}$ is closed under isomorphism, the triangle $X\overset{-\gamma(f)}{\longrightarrow}Y\overset{-\gamma(g)}{\longrightarrow}C(f)\overset{-\gamma(h)}{\longrightarrow}\Sigma X$ also belongs to $\widetilde{\Delta}$. Thus, the triangle
$$\xymatrix{X' \ar[r]^-{-f'} & Y' \ar[r]^-{-g'} & Z' \ar[r]^-{-h'} & \Sigma X}$$ belongs to $\widetilde{\Delta}$.

Conversely, let $ X'\overset{f'}{\longrightarrow}Y'\overset{g'}{\longrightarrow}Z'\overset{h'}{\longrightarrow}\Sigma X'$ be a triangle in $\widetilde{\Delta}$. Then it is isomorphic to a standard triangle $X\overset{\gamma(f)}{\longrightarrow}Y\overset{\gamma(g)}{\longrightarrow}Z\overset{l(\delta)}{\longrightarrow}\Sigma X$, where $X\overset{f}{\longrightarrow}Y\overset{g}{\longrightarrow}Z\overset{\delta}{\dashrightarrow}$ is an $\mathbb{E}$-triangle in $\mathcal{A}$ and $l(\delta):Z\longrightarrow \Sigma X$ is the connecting morphism.
By Proposition \ref{prop: pushout version 1} there is a commutative diagram of $\mathbb{E}$-triangles in $\mathcal{A}$:
$$\xymatrix@R=0.6cm{X \ar[r]^{f} \ar[d]_{i} & Y \ar[r]^{g} \ar@{.>}[d]^{s} & Z \ar@{-->}[r]^{\delta} \ar@{=}[d] & \\
M \ar@{.>}[r]^{d} \ar[d]_{p} & W \ar@{.>}[r]^{e} \ar@{.>}[d]^{t} & Z \ar@{-->}[r]^{i_*\delta} &\\
\Sigma X \ar@{=}[r] \ar@{-->}[d]_{\delta_X} & \Sigma X \ar@{-->}[d]^{f_*\delta_X}&& \\
& & & } $$

\noindent  such that $e^*\delta+t^*\delta_X=0$. Since $M\in \mathcal{F}\cap \mathcal{W}$, one has $e\in \mathrm{TFib}$, and hence $l(\delta)=-\gamma(t)\circ\gamma(e)^{-1}$. Note that \[\xymatrix{X \ar[r]^{\gamma(f)} & Y \ar[r]^{\gamma(s)} & W \ar[r]^{\gamma(t)} & \Sigma X}\]
is a standard triangle in Definition \ref{def: disinguishedtriangle}. It follows from the commutative diagram
$$\xymatrix@R=0.7cm{X \ar[r]^{\gamma(f)} \ar@{=}[d] & Y \ar[r]^{\gamma(s)} \ar@{=}[d] & W \ar[r]^{\gamma(t)} \ar[d]^{\gamma(e)} & \Sigma X \ar@{=}[d] \\ X \ar[r]^{\gamma(f)} & Y \ar[r]^{\gamma(g)} & Z \ar[r]^{-l(\delta)} & \Sigma X}$$

\noindent that $X\overset{\gamma(f)}{\longrightarrow}Y\overset{\gamma(g)}{\longrightarrow}Z\overset{-l(\delta)}{\longrightarrow}\Sigma X$ is in $\Delta$. Thus the triangle $X'\overset{f'}{\longrightarrow}Y'\overset{g'}{\longrightarrow}Z'\overset{-h'}{\longrightarrow}\Sigma X'$ is in $\Delta$.
Since $\Delta$ is closed under isomorphism, the triangle $X'\overset{-f'}{\longrightarrow}Y'\overset{-g'}{\longrightarrow}Z'\overset{-h'}{\longrightarrow}\Sigma X'$ is also in $\Delta$.
\end{proof}

\subsection{Stable categories of Frobenius extriangulated categories}  \ Similar to Frobenius categories, one can define Frobenius extriangulated categories and their stable categories. Using the approach analogous to Theorem \ref{thm: main}, one can show that the stable category of a Frobenius extriangulated category is triangulated. (Here, the extriangulated category need \textbf{not} be weakly idempotent complete.) See also Nakaoka - Palu \cite[Definition 7.1, Remark 7.5]{NP}.

\section*{\bf Disclosure Statement}

The authors state that there are no competing interests to declare.

\section*{\bf Funding}

This work was supported by the National Natural Science Foundation of China under Grant No. 12131015.

\end{document}